\documentclass[12pt]{amsart}%
%%% Packages %%%%%%%%%%%%%%%%%%%%%%%%%%%%%%%%%%%%%%%%%
\usepackage{amsmath}%
\usepackage{amssymb}%
\usepackage{amscd}%
\usepackage{color}%
\usepackage{graphicx}%
\usepackage{mathrsfs}%
\usepackage[all]{xy}%
\usepackage{newtxtext,newtxmath}%
\usepackage[all, warning]{onlyamsmath}
\usepackage{amsrefs}
\usepackage{bbm}
\setlength {\parindent}{8pt}
\setlength {\textwidth}{38pc}
\setlength {\textheight}{50pc}
\setlength{\oddsidemargin}{0pc}
\setlength{\evensidemargin}{0pc}

%%% Basic Macro%%%%%%%%%%%%%%%%%%%%%%%%%%%%%%%%%%%%%%%%
\def\ol#1{\overline{#1}}% 		overline
\def\wh#1{\widehat{#1}}% 	wide hat
\def\wt#1{\widetilde{#1}}% 	wide tilde
% 	underline
% 	wide tilde
%--- Theorems------------------------------------------------------------
\theoremstyle{plain}
    \newtheorem{theorem}{Theorem}[section]
    \newtheorem{proposition}[theorem]{Proposition}
    \newtheorem{lemma}[theorem]{Lemma}
    \newtheorem{corollary}[theorem]{Corollary}
      
\theoremstyle{definition}
    \newtheorem{definition}[theorem]{Definition}

    \newtheorem{remark}[theorem]{Remark}
    
%\renewcommand{\labelenumi}{(\theenumi)}
%%% Define \Alphabet & \endpiece ------------------------------------------------------------------------
\def\Alphabet{A,B,C,D,E,F,G,H,I,J,K,L,M,N,O,P,Q,R,S,T,U,V,W,X,Y,Z}%  Capitalized Alphabet
\def\alphabet{a,b,c,d,e,f,g,h,i,j,k,l,m,n,o,p,q,r,s,t,u,v,w,x,y,z}%	lowercase alphabet
\def\endpiece{xxx}%									marks end of list
%%% Define \makeAlphabet ------------------------------------------------------------------------
\def\makeAlphabet[#1]{\expandafter\makeA#1,xxx,}%		Ex. \makeAlphabet[A,B]
\def\makealphabet[#1]{\expandafter\makea#1,xxx,}%		Ex. \makealphabet[c,d]
\def\makeA#1,{\def\temp{#1}\ifx\temp\endpiece\else%
\mkbb{#1}\mkfrak{#1}\mkbf{#1}\mkcal{#1}\mkscr{#1}\mkbs{#1}\expandafter\makeA\fi}%
\def\makea#1,{\def\temp{#1}\ifx\temp\endpiece\else\mkfrak{#1}\mkbf{#1}\mkbs{#1}\expandafter\makea\fi}%
\def\mkbb#1{\expandafter\def\csname bb#1\endcsname{\mathbb{#1}}}%      Define bb
\def\mkfrak#1{\expandafter\def\csname fr#1\endcsname{\mathfrak{#1}}}%    Define frak
\def\mkbf#1{\expandafter\def\csname b#1\endcsname{\mathbf{#1}}}%           Define bold letters
\def\mkcal#1{\expandafter\def\csname c#1\endcsname{\mathcal{#1}}}%       Define calligraphy
\def\mkscr#1{\expandafter\def\csname s#1\endcsname{\mathscr{#1}}}%       Define script
\def\mkbs#1{\expandafter\def\csname bs#1\endcsname{{\boldsymbol{#1}}}}%       Define bold symbol
%%% Define \makeop -------------------------------------------------------------------------------------------------------
\def\makeop[#1]{\xmakeop#1,xxx,}%					Ex. \makeop[Hom,Spec]
\def\mkop#1{\expandafter\def\csname #1\endcsname{{\mathrm{#1}}}} % 
\def\xmakeop#1,{\def\temp{#1}\ifx\temp\endpiece\else\mkop{#1}\expandafter\xmakeop\fi}%
\def\makeup[#1]{\xmakeup#1,xxx,}%					Ex. \makeop[Hom,Spec]
\def\mkup#1{\expandafter\def\csname #1\endcsname{{\mathrm{#1}\,}}} % 
\def\xmakeup#1,{\def\temp{#1}\ifx\temp\endpiece\else\mkup{#1}\expandafter\xmakeup\fi}%
%%% Initialize ------------------------------------------------------------------------------------------------------------------
% Define Alphabets.  Alphabets stored in \Alphabet
\makeAlphabet[\Alphabet]%				Define bb, frak, bf, cal for Capitalized Alphabet
\makealphabet[\alphabet]%  				Define frak and bf for uncapitalized alphabet
% Define Operators.  Separate Items by using comma.
\makeop[Alt,End,Ext,Hom,id,GL,Tr,sgn,ab,Gal,fin,Cl,tors,Re,alt,qis,Li,cont,Ob,Sch,prim]%Homs
\makeup[Spec,Im,Ker,Coker,gen,Spf,Sp]
\def\bsalpha{{\boldsymbol{\alpha}}}
\def\bsgamma{\boldsymbol{\gamma}}
\def\e{\varepsilon}

\def\divides{\,|\,}
\def\xiDelta{\xi\kern-0.3mm\Delta}
\def\xipDelta{\xi_{\kern-0.3mm p}\kern-0.2mm\Delta}
\def\p{{(p)}}
\def\bra#1{{\langle #1\rangle}}
\def\prim{0}

\newcommand{\isomto}{\xrightarrow{\cong}}
\newcommand{\integ}{\mathrm{int}}
%%%%%%%%%%%%%%%%%%%%%%%%%%%%%%%%%%%%%%%%%%%%%%%%%%
%--- The Document Begins ------------------------------------------------------------------------------------------------------

\begin{document}
%--- the title ------------------------------------------------------------------------------------------------------------------------

\title[$p$-adic Polylogarithm]{$p$-adic Polylogarithms and $p$-adic Hecke $L$-functions 
for Totally Real Fields}
\author[Bannai]{Kenichi Bannai$^{*\diamond}$}
\email{bannai@math.keio.ac.jp}
\address{${}^*$Department of Mathematics, Faculty of Science and Technology, Keio University, 3-14-1 Hiyoshi, Kouhoku-ku, Yokohama 223-8522, Japan}
\address{${}^\diamond$Mathematical Science Team, RIKEN Center for Advanced Intelligence Project (AIP), 1-4-1 Nihonbashi, Chuo-ku, Tokyo 103-0027, Japan}
\address{${}^\dagger$Graduate School of Mathematical Sciences, The University of Tokyo, 
3-8-1 Komaba, Meguro-ku, Tokyo 153-8914, Japan}
\author[Hagihara]{Kei Hagihara$^*$}
\author[Yamada]{Kazuki Yamada$^*$}
\author[Yamamoto]{Shuji Yamamoto$^{\dagger}$}
\subjclass[2010]{11R42, 11M35, 11R80 (Primary), 11G55, 11F85, 14L15 (Secondary)} 
\date{\today}

\date{\today\,\, (Version $5.0$)}
\begin{abstract}
	The purpose of this article is to newly define the $p$-adic polylogarithm 
	as an equivariant class in the cohomology of a certain infinite disjoint union of algebraic tori 
	associated to a totally real field.
	We will then express the special values of $p$-adic $L$-functions interpolating 
	nonpositive values of Hecke $L$-functions of the totally real field in terms of
	special values of these $p$-adic polylogarithms.
\end{abstract}
\thanks{This research is supported by KAKENHI 18H05233.
This research initiated from the KiPAS program FY2014--2018 of the Faculty of Science and Technology at Keio University.}

%\thanks{The author is supported in part by the JSPS postdoctoral fellowship for research abroad.}
\maketitle
\setcounter{tocdepth}{1}

%%%%%%%%%%%%%%%%%%%%%%%%%%%%%%%%%%%%%%%%%%%%%%%%%%
%
%
%
\section{Introduction}\label{section: introduction}
%
%
%
%%%%%%%%%%%%%%%%%%%%%%%%%%%%%%%%%%%%%%%%%%%%%%%%%%

The classical \textit{polylogarithm functions} $\Li_k(t)$ for integers $k\geq 0$ are
complex holomorphic functions defined by the convergent power series
\begin{equation}\label{eq: polylog}
	\Li_k(t)\coloneqq\sum_{n=1}^\infty\frac{t^n}{n^k}, \qquad
	|t|<1.
\end{equation}
Using its description in terms of iterated integrals,
the functions $\Li_k(t)$ may analytically be continued to multivalued functions on $\bbP^1\setminus\{0,1,\infty\}$.
One of the importance of the  polylogarithm functions stems from the result by Be\u{\i}linson \cite{Bei84}
(see also \cite{Neu88}*{(5.1) Theorem} or \cite{HW98}), which states that the regulator maps for Dirichlet motives 
may explicitly be described in terms of the values of the polylogarithm functions at the roots of unity.  
Combined with the fact that the special values of Dirichlet $L$-functions may also be expressed as 
linear combinations of values of the polylogarithm functions at roots of unity,
this result gives the proof of the Gross conjecture for Dirichlet $L$-functions, equivalent in this case to the 
Be\u{\i}linson conjecture for Dirichlet motives.
For Hecke $L$-functions associated to imaginary quadratic fields,
analogues of the polylogarithm functions
studied by Be\u{\i}linson and Levin \cites{BL94, Lev97, BKT10}
are given by the \textit{Kronecker-Lerch functions},
which plays an important role in the proof by Deninger \cites{Den89,Den90} 
of the Be\u{\i}linson conjecture for the associated motives.
However, natural analogues of \eqref{eq: polylog} for the case of Hecke $L$-functions for
general number fields, including the case for totally real fields, have so far been elusive.

Let $p$ be a rational prime.
The purpose of this article is to newly construct natural $p$-adic analogues of the polylogarithm functions \eqref{eq: polylog},
which we simply refer to as the \textit{$p$-adic polylogarithms},
corresponding to the case of Hecke $L$-functions associated to totally real fields.  Our construction is based on the idea 
behind the construction of the Shintani generating class \cite{BHY19}
-- that equivariant cohomology classes instead of functions give canonical objects in the higher dimensional cases.
One prominent original feature of this construction is our use of the infinite disjoint union $\bbT$ of algebraic tori
associated to the totally real field, with multiplicative action of the totally positive elements of the totally real field.
We believe $\bbT$ and its quotient stack $\sT$ should be important geometric objects in the study of the arithmetic
of totally real fields (see Remark \ref{rem: speculation}). 
We construct the $p$-adic polylogarithms as equivariant classes in the $(g-1)$-st cohomology of a certain rigid analytic subspace of $\bbT$, where $g$ is the degree of the totally real field.
We then give a construction of
the $p$-adic $L$-functions interpolating special values of Hecke $L$-functions of
totally real fields following Cassou-Nogu\`es \cite{CN79}, albeit in a more 
streamlined fashion via stronger 
emphasis on the Lerch zeta functions.
Our main theorem, Theorem \ref{thm: main}, expresses the special values of the $p$-adic Hecke $L$-functions as
a linear combination of values at torsion points of the $p$-adic polylogarithms.

In this article,
we fix once and for all embeddings $\ol\bbQ\hookrightarrow\bbC$ and $\ol\bbQ\hookrightarrow\bbC_p$.
The result we generalize is as follows.
The $p$-adic polylogarithms for the case $F=\bbQ$ are functions, originally studied by
Coleman \cite{Col82}*{VI} and Deligne \cite{Del89}*{(3.2.2)}, defined by the series
\begin{equation}\label{eq: classical polylogarithm}
	\Li^\p_k(t)\coloneqq\sum_{\substack{n=1\\(n,p)=1}}^\infty\frac{t^n}{n^k}
\end{equation}
for integers $k\in\bbZ$.
The functions $\Li^\p_k(t)$ extend to rigid analytic functions on the rigid analytic space $\wh U_{\bbQ_p}$ associated to the 
$p$-adic completion $\wh U_{\bbZ_p}$ of
the scheme $U\coloneqq\bbP^1\setminus\{0,1,\infty\}=\bbG_m\setminus\{1\}$
(see for example \cite{Col82}*{Proposition 6.2}, 
\cite{Del89}*{(3.2.3)} or Proposition \ref{prop: continuation}).
Let $N>1$ be an integer.
For any primitive Dirichlet character $\chi\colon(\bbZ/N\bbZ)^\times\rightarrow\bbC^\times$, 
Kubota and Leopoldt \cite{KL64}
defined the $p$-adic $L$-function $L_p(\chi,s)$ as an analytic function for $s\in\bbZ_p$ satisfying the interpolation
property
\[
	L_p(\chi,-k)=\big(1-\chi\omega_p^{-k-1}(p)p^{k}\big)L(\chi \omega_p^{-k-1},-k)
\]
for any integer $k\geq0$,
where $\omega_p\colon\bbZ_p^\times
\rightarrow\bbC^\times$ is the Teichm\"uller character.
Generalizing the result of Koblitz \cite{Kob79} for the case $k=1$, Coleman \cite{Col82} proved the following.

\begin{theorem} 
Suppose $N$ is not a power of $p$, and let $\xi$ be a primitive 
$N$-th root of unity. Then  for any integer $k\in\bbZ$, we have
	\[
		L_p(\chi \omega_p^{1-k},k)=\frac{g(\chi,\xi)}{N}\sum_{\beta\in(\bbZ/N\bbZ)^\times}
		\chi(\beta)^{-1}\,\Li^\p_k(\xi^\beta),
	\]
	where $g(\chi,\xi)$ is the Gauss sum
	$
		g(\chi,\xi)\coloneqq\sum_{\beta\in(\bbZ/N\bbZ)^\times} \chi(\beta)\xi^{-\beta}.
	$
\end{theorem}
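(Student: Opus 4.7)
The plan is to match both sides with the common expression $(1-\chi(p)p^{-k})L(\chi,k)$, exploiting Fourier inversion on $(\bbZ/N\bbZ)^\times$ through the Gauss sum, together with the distribution relation
\[
\Li^\p_k(t)=\Li_k(t)-p^{-k}\Li_k(t^p),
\]
which is a tautological identity of power series on $|t|<1$ and extends to the classical complex polylogarithm (equivalently, to its Coleman rigid-analytic avatar on $\wh U_{\bbQ_p}$) by analytic continuation.

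First I would establish the classical complex analogue
\[
L(\chi,k)=\frac{g(\chi,\xi)}{N}\sum_{\beta\in(\bbZ/N\bbZ)^\times}\chi(\beta)^{-1}\Li_k(\xi^\beta)
\]
by substituting the primitive-character expansion $\chi(n)=g(\chi^{-1})^{-1}\sum_\beta\chi^{-1}(\beta)\xi^{n\beta}$ into the Dirichlet series $\sum_n\chi(n)n^{-k}$, interchanging summations (for $\Re(k)$ large), identifying the inner sum with $\Li_k(\xi^\beta)$, and converting the prefactor via the standard identity $g(\chi)g(\chi^{-1})=\chi(-1)N$ together with the sign convention $g(\chi,\xi)=\chi(-1)g(\chi)$ from the paper. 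At integer $k\le 0$ both sides are algebraic, so the formula transfers to $\bbC_p$ through the fixed embedding $\ol\bbQ\hookrightarrow\bbC_p$.

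Next, I would multiply the distribution relation by $\chi(\beta)^{-1}$ and sum over $\beta\in(\bbZ/N\bbZ)^\times$; the substitution $\beta\mapsto p^{-1}\beta$, a bijection since $(p,N)=1$, in the term containing $\Li_k(\xi^{p\beta})$ extracts a factor of $\chi(p)$, and multiplication by $g(\chi,\xi)/N$ then yields
\[
\frac{g(\chi,\xi)}{N}\sum_\beta\chi(\beta)^{-1}\Li^\p_k(\xi^\beta)=(1-\chi(p)p^{-k})\cdot\frac{g(\chi,\xi)}{N}\sum_\beta\chi(\beta)^{-1}\Li_k(\xi^\beta)=(1-\chi(p)p^{-k})L(\chi,k).
\]
The Kubota--Leopoldt interpolation recalled in the excerpt, applied with the character $\chi\omega_p^{1-k}$ at $k=-m\le 0$, gives $L_p(\chi\omega_p^{1-k},k)=(1-\chi(p)p^{-k})L(\chi,k)$, so the theorem is established for all nonpositive integers $k$.

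Finally, to cover the remaining $k\ge 1$, I would view both sides as values of $p$-adic analytic functions of $k\in\bbZ_p$: the left-hand side is analytic by the defining property of $L_p$, while for the right-hand side one exhibits, at each root of unity $\xi^\beta$ appearing in the sum, a bounded $\bbC_p$-valued measure $\mu_\beta$ on $\bbZ_p^\times$ representing $\Li^\p_k(\xi^\beta)=\int_{\bbZ_p^\times}x^{-k}\,d\mu_\beta(x)$; the nonpositive integers being dense in $\bbZ_p$, the identity then propagates. I expect the main difficulty to lie in this last step: the roots of unity $\xi^\beta$ sit on the boundary of the disk of $p$-adic convergence of the defining series, so establishing the continuous dependence on $k$---equivalently, constructing $\mu_\beta$ from the rigid-analytic extension of Proposition~\ref{prop: continuation}---requires a nontrivial input, classically supplied by Coleman's theory of $p$-adic integration or by explicit Bernoulli-distribution computations.
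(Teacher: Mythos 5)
Your route is genuinely different from the paper's. For nonpositive $k$ your argument is complete and correct: the archimedean identity $L(\chi,k)=\frac{g(\chi,\xi)}{N}\sum_{\beta}\chi(\beta)^{-1}\Li_k(\xi^\beta)$, the distribution relation $\Li^\p_k(t)=\Li_k(t)-p^{-k}\Li_k(t^p)$ (an identity of rational functions once $k\le 0$), the reindexing $\beta\mapsto p^{-1}\beta$, and the Kubota--Leopoldt interpolation formula all check out, and the transfer to $\bbC_p$ is legitimate since both sides are then algebraic numbers. The paper, which cites Coleman for this $F=\bbQ$ statement and proves the totally real generalisation as Theorem~\ref{thm: main}, never touches the complex polylogarithm: its argument is purely $p$-adic. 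Mahler's theorem applied to the Taylor expansion $\wt\cG^\fra_{\sigma,\xi}(T)$ of the generating function $t/(1-t)$ at $\xi$ produces a bounded measure $\mu_{\sigma,\xi}$ (Proposition~\ref{prop: interpolation}; the $p$-integrality of the coefficients is precisely where the hypothesis that $\xi$ avoids the residue disc of $1$, i.e.\ that $N$ is not a $p$-power, enters), and Proposition~\ref{prop: crucial} then shows $\Li^\p_k(\xi)=\int N(x)^{-k}\,d\mu_{\xiDelta}(x)$ for every integer $k$, the case $k>0$ being obtained via the uniform limit $N(x)^{-k}=\lim_{r\to\infty}N(x)^{-k+(p-1)p^r}$. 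The advantage of the paper's route is that it generalises to totally real fields, where there is no usable archimedean $\Li_k$.

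That said, your proposal is incomplete at exactly the step you flag. For $k\ge 1$ you need a bounded measure $\mu_\beta$ on $\bbZ_p^\times$ with $\Li^\p_k(\xi^\beta)=\int_{\bbZ_p^\times}x^{-k}\,d\mu_\beta(x)$, or some other device forcing $p$-adic analyticity of $k\mapsto\Li^\p_k(\xi^\beta)$. Proposition~\ref{prop: continuation} gives only the existence of each individual value in $K(\xi)$; it does not say that these values assemble into an Iwasawa function of $k$, so the density-of-nonpositive-integers argument has nothing to propagate. The content that closes this gap is exactly the Mahler-transform construction of Proposition~\ref{prop: interpolation} together with the moment identity of Proposition~\ref{prop: crucial}. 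Until one of these is supplied, your proposal is a correct and honestly diagnosed sketch, not a proof.
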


In this article, we generalize the above result to the case of totally real fields.
Let $F$ be a totally real field of degree $g$ with ring of integers $\cO_F$, and let $\frI$ be the multiplicative group of nonzero 
fractional ideals of $F$.
For any $\fra$ in $\frI$, we let 
$
	\bbT^\fra\coloneqq\Hom_\bbZ(\fra,\bbG_m)
$
be the algebraic torus defined over $\bbZ$ which represents the functor 
associating to any $\bbZ$-algebra $R$ the group $\bbT^\fra(R)=\Hom_\bbZ(\fra,R^\times)$.
We let $\bbT\coloneqq\coprod_{\fra\in\frI}\bbT^\fra$
and $U\coloneqq\coprod_{\fra\in\frI}U^\fra\subset\bbT$,
where  $U^\fra\coloneqq\bbT^\fra\setminus\{1\}$.
In what follows, we fix a finite extension $K$ of $\bbQ_p$ in $\bbC_p$ containing all the conjugates of $F$.
We let
$\wh U_K\coloneqq\coprod_{\fra\in\frI}\wh U^\fra_K$, where $\wh U^\fra_K$ is the rigid analytic space over $K$ associated to the formal completion of $U^\fra\otimes\cO_K$ with respect to the special fiber.
The structure sheaf $\sO_{\wh U_K}$ on $\wh U_K$ is naturally an 
$F_+^\times$-equivariant sheaf in the sense of \S\ref{subsection: equivariant},
where $F_+^\times$ is the set of totally positive elements $F_+$ of $F$ viewed as a group with respect to the multiplication.

For any $k\in\bbZ$, we construct the 
\textit{$p$-adic polylogarithm} as a class
\[
	\Li^\p_{k}(t)\in H^{g-1}\bigl(\wh U_K/F_+^\times,\sO_{\wh U_K}\bigr),
\]
where $H^{g-1}\bigl(\wh U_K/F_+^\times,\sO_{\wh{U}_K}\bigr)$ is the equivariant cohomology of 
$\wh U_K$ with coefficients in $\sO_{\wh U_K}$. 
 %(see Theorem \ref{thm: polylog} and Definition \ref{def: k} for details).
For any nontrivial torsion point $\xi$ of $\wh U_K$, 
we denote by $\Delta_\xi$ the group of totally positive \emph{units} in $F^\times_+$ which preserve $\xi$. 
We then view $\xi$ as a rigid analytic space $\xi=\Sp K(\xi)$ with trivial action of $\Delta_\xi$. 
%we view $\xi$ as a scheme $\xi\coloneqq
%\Spec K(\xi)$ with trivial action of $\Delta_\xi$, where $\Delta_\xi$ is the multiplicative group 
%of totally positive \textit{units} in $F_+^\times$ which acts trivially on $\xi$.
We define the \textit{value} $\Li^{\p}_{k}(\xi)$ of the $p$-adic polylogarithm at $\xi$ 
to be the image of $\Li^{\p}_{k}(t)$ by the %composition of the
specialization map 
\[
    H^{g-1}\bigl(\wh U_K/F_+^\times,\sO_{\wh{U}_K}\bigr)\rightarrow
    H^{g-1}(\xi/\Delta_\xi,\sO_\xi)\cong K(\xi)
\]
induced from the equivariant morphism $\xi\rightarrow\wh{U}_K$.
%and the canonical isomorphism,
%$
%	H^{g-1}(\xi/\Delta_\xi,\sO_\xi)\cong K(\xi).
%$

For any nonzero integral ideal $\frg$ of $F$, we denote by $\Cl^+_F(\frg)$ 
the narrow ray class group modulo $\frg$ of $F$.
Let $\frg\neq(1)$. For any finite primitive Hecke character $\chi\colon\Cl^+_F(\frg)\rightarrow\bbC^\times$, 
Deligne-Ribet \cite{DR80}, Barsky \cite{Bar78} and Cassou-Nogu\`es  \cite{CN79}
defined the $p$-adic $L$-function $L_p(\chi,s)$ as an analytic function for $s\in\bbZ_p$
satisfying the interpolation property
\[
	L_p(\chi,-k)=\bigg(\prod_{\frp\divides(p)}\Big(1 - \chi\omega^{-k-1}_p(\frp)N\frp^{k}\Big)\bigg)L(\chi \omega_p^{-k-1},-k)
\]
for any integer $k\geq0$,
where $\omega_p$ denotes the composition of the norm map with the Teichm\"uller character.
Our main theorem, Theorem \ref{thm: main}
gives the following.

\begin{theorem}[=Corollary \ref{cor: main}]\label{thm: 2}
	Suppose $\frg$ does not divide any power of $(p)$,
	and let $\xi$ be an arbitrary primitive $\frg$-torsion point of $\bbT^\fra[\frg]$
	for some fractional ideal $\fra$ prime to $\frg$.
	Then for any integer $k\in\bbZ$, we have
	\[
		L_p(\chi\omega_p^{1-k},k)=\frac{g(\chi,\xi)}{N\frg}
		\sum_{\frb\in\Cl_F^+(\frg)}
		\chi(\frb)^{-1}\,\Li_k^\p(\xi^\frb),
	\]
	where $g(\chi,\xi)$ is a certain Gauss sum associated to the Hecke character $\chi$, and $\xi^\frb$ is a certain torsion point in $\bbT^{\fra\frb}[\frg]$.
\end{theorem}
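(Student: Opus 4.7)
The plan is to deduce Theorem \ref{thm: 2} directly from the main theorem, Theorem \ref{thm: main}. The latter should give an intrinsic formula for $L_p(\chi\omega_p^{1-k},k)$ in terms of values of the equivariant class $\Li_k^\p(t) \in H^{g-1}(\wh U_K/F_+^\times,\sO_{\wh U_K})$ at primitive $\frg$-torsion points of $\bbT$; the corollary repackages this in terms of the narrow ray class group $\Cl_F^+(\frg)$ after fixing a single base torsion point $\xi \in \bbT^\fra[\frg]$ and a twist by a Gauss sum.

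First, I would analyze how the multiplicative structure of the ideal monoid acts on primitive $\frg$-torsion points across the disjoint union $\bbT = \coprod_{\fra'}\bbT^{\fra'}$. Given an integral ideal $\frb$ prime to $\frg$, the inclusion $\fra\frb \hookrightarrow \fra$ induces a natural morphism $\bbT^\fra = \Hom_\bbZ(\fra,\bbG_m) \to \Hom_\bbZ(\fra\frb,\bbG_m) = \bbT^{\fra\frb}$, and restriction to $\frg$-torsion sends the base point $\xi$ to a canonical $\xi^\frb \in \bbT^{\fra\frb}[\frg]$. The hypothesis $(\frb,\frg)=1$ ensures $\xi^\frb$ is again primitive, and the $F_+^\times$-equivariance of $\Li_k^\p$ ensures that the specialization value $\Li_k^\p(\xi^\frb)$ depends only on the class of $\frb$ in $\Cl_F^+(\frg)$ rather than on the chosen integral representative.

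Second, I would identify the Gauss sum $g(\chi,\xi)$ through the Fourier duality between $\Cl_F^+(\frg)$ and the $(\cO_F/\frg)^\times$-orbit of $\xi$. The main theorem should naturally produce an expression indexed by primitive $\frg$-torsion orbits, and Fourier inversion on the finite abelian group $\Cl_F^+(\frg)$ converts this into a sum of the form $\sum_\frb \chi(\frb)^{-1}\Li_k^\p(\xi^\frb)$, with the scalar $g(\chi,\xi)/N\frg$ arising precisely from the normalization of the Fourier transform: the Gauss sum absorbs the character dualization and the factor $N\frg$ is the cardinality of the relevant dual group. The hypothesis that $\frg$ does not divide any power of $(p)$ guarantees that the torsion points $\xi^\frb$ land in $\wh U_K$ so that the specialization map used to evaluate $\Li_k^\p$ is well-defined.

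The main obstacle will be tracking the precise bookkeeping: verifying that the Fourier-theoretic reformulation of the main theorem's sum produces exactly the stated combination with the claimed Gauss-sum factor and normalization. A subsidiary point is checking that the right-hand side is independent of the auxiliary choices of $\fra$ and of $\xi$ within $\bbT^\fra[\frg]$; this should follow from the fact that any two primitive $\frg$-torsion points in $\bbT^\fra$ differ by the action of some $u \in (\cO_F/\frg)^\times$, and the induced permutation of ray classes $\frb \mapsto u\frb$ is compensated by a corresponding twist of the Gauss sum $g(\chi,\xi)$, leaving the full expression invariant. Once these compatibilities are established, the reduction to Theorem \ref{thm: main} is a purely formal rearrangement of finite sums.
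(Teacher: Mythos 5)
Your proposal follows essentially the same route as the paper: the corollary is deduced from Theorem \ref{thm: main} by (i) re-indexing the sum over $\sT_\prim[\frg]$ via the action $\frb\mapsto\xi^\frb$ of $\Cl_F^+(\frg)$, which the paper proves is simply transitive (Lemma \ref{lem: ST}), and (ii) the transformation $g(\chi,\xi^\frb)=g(\chi,\xi)\chi(\frb)^{-1}$ of the Gauss sum (Lemma \ref{lem: Gauss sums}), recalling $c_\chi(\xi)=g(\chi,\xi)/N\frg$. One small imprecision: the step from the main theorem to the corollary is not a new Fourier inversion on $\Cl_F^+(\frg)$ (the Fourier transform on $\fra/\frg\fra$ is already packaged into $c_\chi$ in the main theorem) but simply the re-indexing together with the Gauss-sum functional equation, and the factor $N\frg$ is $|\fra/\frg\fra|$ rather than the order of a dual of $\Cl_F^+(\frg)$; still, the overall mechanism you describe is the paper's.
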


We remark that
there is another approach to $L$-values and $p$-adic $L$-functions of totally real fields via cohomology classes, 
by Sczech \cite{MR1231838}, Solomon \cites{MR1631700,MR1670874}, Hu--Solomon \cite{MR1794257}, Hill \cite{MR2392823}, 
Spiess \cite{MR3201900}, Charollois--Dasgupta \cite{MR3272012} and Charollois--Dasgupta--Greenberg \cite{MR3351752}. 
They mainly study certain group cocycles on $\mathrm{GL}_n(\bbQ)$, 
while our method relies on the geometric objects $\bbT$ and $\sT$. 
It would be interesting to clarify how their method is related with the method of the current article. 
We also note that Bekki \cite{Bek21} studies $L$-values of general number fields 
by incorporating the ideas of both the above-mentioned works and this article.

Be\u{\i}linson and Deligne gave an interpretation of the complex polylogarithm functions \eqref{eq: polylog}
as periods of certain unipotent pro-variation of mixed $\bbR$-Hodge structures, referred to as the \textit{polylogarithm sheaf},
on the projective line minus three points.  Furthermore, this polylogarithm sheaf was interpreted as the Hodge
realization of the \textit{motivic polylogarithm class} associated with the algebraic torus $\bbG_m$, and the fact
that the regulator maps are expressed by \eqref{eq: polylog} follows from this fact (see for example \cite{HW98}).
The motivic polylogarithm class for general commutative group schemes were constructed by Huber and Kings \cite{HK18}.
The $p$-adic interpolation of critical values of Hecke $L$-functions of totally real fields was previously studied by Be\u{\i}linson, Kings, and Levin in \cite{BKL18} via the topological polylogarithm on a torus.
Our construction of the $p$-adic polylogarithm arose from our effort to understand the syntomic realization of the motivic polylogarithm class for $\bbT$,
in order to also capture the noncritical values of the $p$-adic Hecke $L$-functions.

In \cite{Gro90}, Gros gave a working definition of syntomic cohomology and the syntomic regulator map,
and Kurihara \cite{Gro90}*{Appendix} proved that the syntomic regulators in the cyclotomic case are
expressed by the $p$-adic polylogarithm functions \eqref{eq: classical polylogarithm}.  
The interpretation of these results in terms of syntomic realization of the polylogarithm
sheaf was given in \cite{Ban00-0}.
The relation of the $p$-adic polylogarithms in our case
to the syntomic realization of the polylogarithm classes,
as well as its plectic incarnation following the spirit of Nekov\'{a}\v{r} and Scholl 
\cite{NS16}, will be explored in subsequent research.

\tableofcontents

The content of this article is as follows.
In \S\ref{section: Shintani}, we give the definition of Lerch zeta functions and 
the construction of the Shintani generating class, generalizing 
the result of \cite{BHY19} to the case when the narrow class 
number of the totally real field is not necessarily one.
In \S\ref{section: polylogarithm},  we give  the construction of the $p$-adic
polylogarithm.
In \S\ref{section: interpolation}, we construct a $p$-adic measure interpolating
values at nonpositive integers of Lerch zeta functions,  and use this measure to  construct
the $p$-adic $L$-function $L_p(\chi,s)$.  Our main theorem, Theorem \ref{thm: main},
will be proved  in \S\ref{section: main theorem}.

%%%%%%%%%%%%%%%%%%%%%%%%%%%%%%%%%%%%%%%%%%%%%%%%%%
%
%
%
\section{Lerch Zeta Functions and the Shintani Generating Class}\label{section: Shintani}
%
%
%
%%%%%%%%%%%%%%%%%%%%%%%%%%%%%%%%%%%%%%%%%%%%%%%%%%

In this section, we generalize the construction of the Shintani Generating Class given
in \cite{BHY19} to deal with the case when the narrow class number of the totally real
field is not necessarily one.
In particular, we introduce the scheme
$\bbT\coloneqq \coprod_{\fra\in\frI}\bbT^\fra$, where $\frI$ denotes the set
of nonzero fractional ideals of the totally real field
and $\bbT^\fra\coloneqq\Hom_\bbZ(\fra,\bbG_m)$.

%%%%%%%%%%%%%%%%%%%%%%%%%%%%%%%%%%%%%%%%%%%%%%%%%%
%
\subsection{Lerch Zeta Functions and Hecke $L$-functions}\label{subsection: Lerch}
%
%%%%%%%%%%%%%%%%%%%%%%%%%%%%%%%%%%%%%%%%%%%%%%%%%%

Let $F$ be a totally real field of degree $g$ with ring of
integers $\cO_F$.  
In this subsection, we first give the definition of the Lerch zeta functions
of $F$. 
We then introduce the set $\sT_\tors$ as a natural parameter space for the Lerch zeta functions,
and express the $L$-function
associated to a Hecke character of $F$ canonically in terms of the Lerch zeta functions.

In what follows, for any subset $X$ of $F$, we denote by $X_+$ the set of totally positive elements of $X$.
For any fractional ideal $\fra$ of $F$, we let $\bbT^\fra$ be the algebraic torus
\[
	\bbT^\fra\coloneqq\Hom(\fra,\bbG_m)
\]
defined over $\bbZ$ used by Katz \cite{Katz81}.
The algebraic torus $\bbT^\fra$ represents the functor associating
to any $\bbZ$-algebra $R$ the group of additive characters
$\bbT^\fra(R)=\Hom(\fra,R^\times)$ on $\fra$ with values  in $R^\times$.
$\bbT^\fra$ is given as the affine scheme $\bbT^\fra=\Spec\bbZ[t^\alpha\mid\alpha\in\fra]$, 
where $t^\alpha$ are parameters satisfying $t^\alpha t^{\alpha'}=t^{\alpha+\alpha'}$ for any $\alpha,\alpha'\in\fra$.  
We let $\Delta\coloneqq(\cO_{F}^\times)_+$ be the group of totally positive units of $F$.
Then the natural left action of $\Delta$ on $\fra$ induces a right action of $\Delta$ on $\bbT^\fra$.
The action of $\e\in\Delta$, denoted by $\bra{\e}\colon\bbT^\fra\to\bbT^\fra$, maps 
a character $\xi\in\bbT^\fra$ to the character $\xi^\e$ defined by 
$\xi^\e(\alpha)\coloneqq\xi(\e\alpha)$. 
In terms of the coordinate ring, the isomorphism $\bra{\e}\colon\bbT^\fra\rightarrow\bbT^\fra$ 
is given by $t^\alpha\mapsto t^{\e\alpha}$ for any $\alpha\in\fra$.  

For any torsion point $\xi\in\bbT^\fra(\ol\bbQ)$, we define a function $\xi\Delta$ 
on $\fra$ to be the sum over the $\Delta$-orbit of $\xi$, i.e., 
\[
	\xiDelta\coloneqq\sum_{\e\in\Delta_\xi\backslash\Delta} \xi^\e,
\]
where $\Delta_\xi\coloneqq\{\e\in\Delta\mid\xi^\e=\xi\}\subset\Delta$ is 
the isotropic subgroup of $\xi$.
By definition, $\xiDelta$ satisfies $\xi^\e\!\Delta=\xiDelta$ for any
$\e\in\Delta$ and defines a map $\xiDelta\colon\Delta\backslash\fra\rightarrow\ol\bbQ$.
We define the Lerch zeta function as follows.

\begin{definition}
	Let $\fra$ be a nonzero 
	fractional ideal of $F$, and let $\xi$ be a torsion point of $\bbT^\fra(\ol\bbQ)$.  
	We define the \textit{Lerch zeta function} for $\xiDelta$ by the series
	\begin{equation}\label{eq: Lerch}
		\cL(\xiDelta,s)\coloneqq\sum_{\alpha\in\Delta\backslash\fra_+} \xiDelta(\alpha) N(\fra^{-1}\alpha)^{-s}
	\end{equation}
	for $s\in\bbC$ with $\Re(s)>1$.
\end{definition}

This is a natural generalization of the Lerch zeta function defined in \cite{BHY19}*{Definition 1.3}.
The sum converges absolutely for $\Re(s)>1$.  The function continues meromorphically 
to the whole complex plane, and is entire if $\xi\neq 1$.

We next introduce the set $\sT_\tors$
as a natural parameter space for the Lerch zeta functions. 
The action of $\Delta$ on $\bbT^\fra$ generalizes to isomorphisms given by 
elements of $F_+^\times$
as follows.  For any $x\in F_+^\times$, the multiplication by $x$ gives an isomorphism 
$\fra\cong x\fra$ of $O_F$-modules, and hence induces an 
isomorphism of group schemes
\begin{equation}\label{eq: ab}
	\bra{x}\colon\bbT^{x\fra}\to\bbT^{\fra}.
\end{equation}
Explicitly, this isomorphism maps any character $\xi\in\bbT^{x\fra}(R)$ 
to the character $\xi^x\in\bbT^{\fra}(R)$ given by $\xi^x(\alpha)\coloneqq\xi(x\alpha)$ 
for $\alpha\in\fra$. If $\frI$ denotes the group of nonzero fractional ideals of $F$,
then $\bbT\coloneqq\coprod_{\fra\in\frI} \bbT^\fra$ has a natural action of $F_+^\times$
given by the isomorphism
\begin{equation}\label{eq: action}
	\bra{x}\colon\bbT\rightarrow \bbT
\end{equation}
for any $x\in F_+^\times$
obtained as the collection of isomorphisms 
$\bra{x}\colon\bbT^{x\fra}\rightarrow\bbT^{\fra}$ for all $\fra\in\frI$. 

\begin{definition}\label{def: stack}
	We define $\sT(\ol\bbQ)$ to be the quotient set 
	\[
		\sT(\ol\bbQ)\coloneqq\bbT(\ol\bbQ)/F_+^\times,
	\]
	and let $\sT_\tors\subset \sT(\ol\bbQ)$ be the set of points in $\sT(\ol\bbQ)$
	represented by torsion points of 
	$\bbT^\fra(\overline{\bbQ})$ for $\fra\in\frI$.
Note that if we fix a set of fractional ideals $\frC$ of $F$ representing the narrow ideal 
class group $\Cl^+_F(1)\coloneqq\frI/P_+$ (where $P_+\coloneqq\{(x)\mid x\in F_+^\times\}$), 
then we have
$
	\sT(\ol\bbQ)=\coprod_{\fra\in\frC}(\bbT^\fra(\ol\bbQ)/\Delta).
$
\end{definition}

The following lemma shows that $\sT_\tors$ is the natural parameter space 
for the Lerch zeta functions. 

\begin{lemma}%\label{lem: one}
	Let $\fra$ be a nonzero fractional ideal of $F$ and let $x\in F_+^\times$. 
	Then for any torsion point $\xi\in\bbT^{x\fra}(\ol\bbQ)$, we have
	\[
		\cL(\xiDelta,s) = \cL(\xi^x\!\Delta,s),
	\]
	where $\xi^x$ is the torsion point of $\bbT^{\fra}(\ol\bbQ)$ corresponding to $\xi$ 
	through the isomorphism \eqref{eq: ab}. In other words, the Lerch zeta function 
	depends only on the class of $\xi$ in $\sT_\tors$.
\end{lemma}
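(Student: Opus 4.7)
The plan is to prove the equality of Lerch zeta functions by exhibiting a term-by-term bijection between the defining series, induced by the multiplication-by-$x$ map $\fra \to x\fra$, $\beta \mapsto x\beta$.

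First, I would observe that since $x \in F_+^\times$, multiplication by $x$ gives a bijection $\fra_+ \isomto (x\fra)_+$ which is $\Delta$-equivariant (because $\Delta$ acts by multiplication on both sides, and multiplication in $F$ is commutative), and hence induces a bijection $\Delta\backslash\fra_+ \isomto \Delta\backslash(x\fra)_+$. This is the key change of variables $\alpha = x\beta$ over which the two sides will be compared.

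Next, I would check that each of the three ingredients appearing in the summand is preserved under this bijection. For the norm factor, $N((x\fra)^{-1}(x\beta)) = N(\fra^{-1}\beta)$ is immediate from the multiplicativity of the norm. For the character value, by the very definition of the isomorphism $\bra{x}\colon \bbT^{x\fra}\to\bbT^\fra$ recalled in \eqref{eq: ab}, we have $\xi(x\beta) = \xi^x(\beta)$ for all $\beta\in\fra$. This same definition shows that for any $\e\in\Delta$, the two actions commute in the sense that $(\xi^\e)^x = (\xi^x)^\e$; consequently the isotropy subgroups agree, $\Delta_\xi = \Delta_{\xi^x}$. Combining these, for $\alpha = x\beta$,
\[
    \xiDelta(\alpha) = \sum_{\e\in\Delta_\xi\backslash\Delta}\xi(\e x\beta) = \sum_{\e\in\Delta_{\xi^x}\backslash\Delta}\xi^x(\e\beta) = \xi^x\!\Delta(\beta).
\]

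Finally, I would substitute these identities into the defining series \eqref{eq: Lerch} for $\cL(\xiDelta, s)$, re-index the sum over $\Delta\backslash(x\fra)_+$ as a sum over $\Delta\backslash\fra_+$ via $\alpha = x\beta$, and read off the series for $\cL(\xi^x\!\Delta, s)$. This proof is essentially a formal unwinding of definitions, so I do not anticipate any real obstacle; the only point that warrants care is verifying that the isotropy subgroups $\Delta_\xi$ and $\Delta_{\xi^x}$ genuinely coincide, which is where it becomes important that the action \eqref{eq: action} of $F_+^\times$ on $\bbT$ commutes with the $\Delta$-action.
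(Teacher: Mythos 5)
Your proposal is correct and is essentially the same change-of-variables argument the paper uses: re-index the sum over $\Delta\backslash(x\fra)_+$ as a sum over $\Delta\backslash\fra_+$ via $\alpha=x\beta$, and observe that norm and $\Delta$-orbit sum both transform compatibly. The paper states this more tersely (it directly asserts $\xiDelta(x\alpha)=\xi^x\!\Delta(\alpha)$ without spelling out the equality of isotropy subgroups $\Delta_\xi=\Delta_{\xi^x}$); your added verification of that point is correct and fills in what the paper leaves implicit.
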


\begin{proof}
	By definitions of the Lerch zeta functions and of $\xi^x$, we have
	\[
		\cL(\xiDelta,s)=\sum_{\beta\in\Delta\backslash x\fra_+} \xiDelta(\beta) 
		N(\fra^{-1}x^{-1}\beta)^{-s}
		=\sum_{\alpha\in\Delta\backslash\fra_+} \xiDelta(x\alpha) N(\fra^{-1}\alpha)^{-s}
		=\cL(\xi^x\!\Delta,s)
	\]
	as desired.
\end{proof}

We will show in Proposition \ref{prop: Hecke} that
the Lerch zeta functions may be used to express the Hecke $L$-functions of Hecke characters 
of $F$. We first review some results concerning the finite Fourier transform.
For a fractional ideal $\fra\in\frI$ and a nonzero integral ideal $\frg$, 
we denote by $\bbT^\fra[\frg]\coloneqq\Hom_\bbZ(\fra/\frg\fra,\ol\bbQ^\times)$ 
the character group of the finite abelian group $\fra/\frg\fra$. 
We call it the group of $\frg$-torsion points of $\bbT^\fra(\ol{\bbQ})$, 
since the $\cO_F$-action on $\fra$ induces an $\cO_F$-module structure 
on $\bbT^\fra(\ol{\bbQ})$, and its submodule of $\frg$-torsion points is identified 
with $\bbT^\fra[\frg]$ through the natural inclusion 
$\bbT^\fra[\frg]\hookrightarrow\bbT^\fra(\ol{\bbQ})$. 
Now, for a function $\phi\colon\fra/\frg\fra\rightarrow\bbC$ and $\xi\in\bbT^\fra[\frg]$, let
\[
	c_\phi(\xi)\coloneqq N\frg^{-1}\sum_{\beta\in\fra/\frg\fra} \phi(\beta)\xi(-\beta). 
\]
Then by the Fourier inversion formula for functions on $\fra/\frg\fra$, we have
\[
	\phi(\alpha)=\sum_{\xi\in\bbT^\fra[\frg]}c_\phi(\xi)\xi(\alpha)
\]
for any $\alpha\in\fra/\frg\fra$.  Moreover, if $\phi^\e=\phi$ for any $\e\in\Delta$,
where $\phi^\e$ is the function on $\fra/\frg\fra$ defined by $\phi^\e(\alpha)=\phi(\e\alpha)$, 
then we have $c_\phi(\xi^\e)=c_\phi(\xi)$, hence we see that
\begin{equation}\label{eq: equivariant Fourier}
	\phi(\alpha)=\sum_{\xi\in\bbT^\fra[\frg]/\Delta}c_\phi(\xi)\xiDelta(\alpha)
\end{equation}
for any $\alpha\in\fra/\frg\fra$ in this case.

In what follows, for any nonzero integral ideal  $\frg$ of $F$, 
we let $\Cl^+_F(\frg)$ be the narrow ray class group of $F$ modulo $\frg$. 
That is, we let $\Cl_F^+(\frg)\coloneqq \frI_\frg/P_+(\frg)$,
where $\frI_\frg$ is the group of nonzero fractional ideals of $F$ prime to $\frg$ and 
$P_+(\frg)\coloneqq\{(x)\mid x\in F_+^\times,\,x\equiv1\operatorname{mod}^\times\frg\}$. 
For $\fra\in\frI$, we denote by $(\fra/\frg\fra)^\times$ the subset of $\fra/\frg\fra$ 
consisting of all elements which generate $\fra/\frg\fra$ as an $\cO_F/\frg$-module. 
Then we have the following description of $\Cl^+_F(\frg)$.

\begin{lemma}\label{lem: ray class group}
For any $\fra\in\frI$, we have a well-defined map 
\[
    (\fra/\frg\fra)^\times\longrightarrow\Cl^+_F(\frg) 
\]
which sends a residue class of $\alpha\in\fra_+$ to the ray class of $\fra^{-1}\alpha$. 
Moreover, if $\frC\subset\frI$ is a set of representatives of 
the narrow ideal class group $\Cl^+_F(1)$, 
the above maps for $\fra\in\frC$ induce a bijection
\[
    \coprod_{\fra\in\frC}\Delta\backslash(\fra/\frg\fra)^\times\longrightarrow\Cl^+_F(\frg). 
\]
\end{lemma}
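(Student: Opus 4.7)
The plan is to establish the map on $(\fra/\frg\fra)^\times$, check $\Delta$-equivariance, and then prove surjectivity and injectivity of the resulting map from $\coprod_{\fra\in\frC}\Delta\backslash(\fra/\frg\fra)^\times$. The organizing principle is to respect the natural projection $\Cl^+_F(\frg)\twoheadrightarrow\Cl^+_F(1)$, which corresponds on the source to the decomposition over $\fra\in\frC$.

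For well-definedness, first note that $\alpha\in\fra$ lies in $(\fra/\frg\fra)^\times$ precisely when $\fra^{-1}\alpha$ is integral and prime to $\frg$; in particular $v_\frp(\alpha)=v_\frp(\fra)$ for all $\frp\mid\frg$. If $\alpha,\alpha'\in\fra_+$ satisfy $\alpha'\equiv\alpha\pmod{\frg\fra}$, a short valuation check at primes dividing $\frg$ gives $\alpha'/\alpha\equiv 1\pmods\frg$, hence $(\alpha'/\alpha)\in P_+(\frg)$ and the ray classes of $\fra^{-1}\alpha$ and $\fra^{-1}\alpha'$ coincide. Every class in $(\fra/\frg\fra)^\times$ admits a totally positive lift because $\frg\fra$ is a full-rank lattice in $F\otimes\bbR$ and the totally positive cone is a nonempty open set, so any representative can be shifted by a sufficiently large totally positive element of $\frg\fra$. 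The $\Delta$-invariance is immediate from $(\e\alpha)\cO_F=\alpha\cO_F$.

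For surjectivity, given a ray class in $\Cl^+_F(\frg)$ represented by an integral ideal $\frc$ prime to $\frg$, I would choose $\fra\in\frC$ representing $[\frc]^{-1}\in\Cl^+_F(1)$. Then $\fra\frc$ is principal with a totally positive generator $\alpha$, which lies in $\fra_+$ and satisfies $\fra^{-1}\alpha=\frc$, providing the required preimage.

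Injectivity is the step requiring the most care and is where I expect the main subtlety. Suppose $\alpha\in\fra_+$ and $\alpha'\in\fra'_+$ (with $\fra,\fra'\in\frC$) have the same image. Projecting to $\Cl^+_F(1)$ forces $[\fra]=[\fra']$, hence $\fra=\fra'$ by the choice of $\frC$. The coincidence of ray classes then yields $\e\in\cO_F^\times$ and $y\in F_+^\times$ with $y\equiv 1\pmods\frg$ and $\alpha'=\e y\alpha$, and total positivity forces $\e\in\Delta$. It remains to show $\alpha'\equiv\e\alpha\pmod{\frg\fra}$, which I would verify prime by prime using $\alpha'-\e\alpha=\e\alpha(y-1)$: at $\frp\mid\frg$ combine $y\equiv 1\pmods\frg$ with $v_\frp(\alpha)=v_\frp(\fra)$; at $\frp\nmid\frg$ use that $\alpha,\alpha'\in\fra$, so $v_\frp(\alpha'-\e\alpha)\geq v_\frp(\fra)=v_\frp(\frg\fra)$ since $v_\frp(\frg)=0$. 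This yields $\alpha'-\e\alpha\in\frg\fra$ and completes the proof of injectivity.
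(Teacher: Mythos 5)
Your proof is correct and complete. The paper states this lemma without proof, so there is nothing to compare against; you have supplied a sound argument that the authors left to the reader.

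A few remarks on the details you got right: the key local observation, that $\alpha\in\fra$ lies in $(\fra/\frg\fra)^\times$ exactly when $v_\frp(\alpha)=v_\frp(\fra)$ for every $\frp\mid\frg$ (hence $\fra^{-1}\alpha$ is integral and prime to $\frg$), is exactly what makes the well-definedness and the two congruence checks go through. In particular the identity $\alpha'-\e\alpha=\e\alpha(y-1)$ in the injectivity step is the right way to transfer the multiplicative congruence $y\equiv 1\pmods{\frg}$ into an additive one modulo $\frg\fra$, and you correctly split the verification between primes dividing $\frg$ (where $v_\frp(\alpha)=v_\frp(\fra)$ and $v_\frp(y-1)\geq v_\frp(\frg)$ are both used) and primes away from $\frg$ (where $\alpha'-\e\alpha\in\fra=\frg\fra$ locally). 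The only pedantic point worth flagging is in the first reduction of the injectivity argument: projecting to $\Cl^+_F(1)$ gives $[\fra^{-1}]=[\fra'^{-1}]$, which of course still forces $\fra=\fra'$ since $\frC$ is a transversal; your phrasing "$[\fra]=[\fra']$" is the same statement, but it is worth being explicit that it is the classes of the inverses that one reads off directly. Everything else, including the existence of a totally positive lift via the full-rank lattice argument and the choice of $\fra\in\frC$ in the class of $[\frc]^{-1}$ for surjectivity, is exactly as it should be.
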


A finite Hecke character of $F$ is a homomorphism $\chi\colon\Cl^+_F(\frg)\rightarrow\bbC^\times$ 
for some integral ideal $\frg$. 
%Then by \cite{Neu99}*{Chapter VII, (6.9) Proposition}, 
%there exists a unique character $\chif\colon(\cO_F/\frg)^\times\rightarrow\bbC^\times$
%such that $\chi((\alpha))=\chif(\alpha)$ for any $\alpha\in \cO_{F+}$ prime to $\frg$.
By composing with the map $(\fra/\frg\fra)^\times\to\Cl^+_F(\frg)$ given in 
Lemma \ref{lem: ray class group}, we define a map 
$\chi_\fra\colon(\fra/\frg\fra)^\times\to\bbC^\times$ for each $\fra\in\frI$. 
Then we have 
\begin{equation}\label{eq: chi_fra}
    \chi_{\fra\frb}(\alpha\beta)=\chi_\fra(\alpha)\chi_\frb(\beta) 
\end{equation}
for any $\fra,\frb\in\frI$ and $\alpha\in(\fra/\frg\fra)^\times$, $\beta\in(\frb/\frg\frb)^\times$. 

By extension by \textit{zero}, we will often regard $\chi$ as a map $\chi:\frI\rightarrow\bbC$, 
and $\chi_\fra$ as a map $\chi_\fra\colon\fra/\frg\fra\rightarrow\bbC$. 
Then the above formula \eqref{eq: chi_fra} holds for any $\alpha\in\fra$ and $\beta\in\frb$. 

\begin{definition}%\label{def: finite fourier}
	Let $\chi\colon\Cl^+_F(\frg)\rightarrow\bbC^\times$ be a finite Hecke character.
	For any $\xi\in\bbT^\fra[\frg]$, we let
	\[
		c_\chi(\xi)\coloneqq c_{\chi_\fra}(\xi).
	\]
	Note that by definition, for any $\e\in\Delta$, we have $\chi_\fra^\e=\chi_\fra$,
	hence $c_{\chi}(\xi^\e)=c_{\chi}(\xi)$ for any $\xi\in\bbT^\fra[\frg]$.
\end{definition}

\begin{lemma}\label{lem: two}
	Let $\fra\in\frI$ and $x\in F_+^\times$. 
	Then for any torsion point $\xi\in\bbT^{x\fra}[\frg]$, we have
	\[
		c_{\chi}(\xi)=c_{\chi}(\xi^x),
	\]
	where $\xi^x$ is the torsion point of $\bbT^{\fra}[\frg]$ 
	corresponding to $\xi$ through the isomorphism \eqref{eq: ab}.
	In other words, the constant $c_\chi(\xi)$ depends only on the class of $\xi$ in $\sT_\tors$.
\end{lemma}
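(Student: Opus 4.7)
The plan is to unpack the definition of $c_\chi$ on both sides of the claimed equality and reduce everything to a straightforward change of variables in the finite Fourier sum. First, I would write
\[
c_\chi(\xi) = N\frg^{-1} \sum_{\beta \in x\fra/\frg x\fra} \chi_{x\fra}(\beta)\, \xi(-\beta),
\]
and use that multiplication by $x \in F_+^\times$ induces an $\cO_F$-module isomorphism $\fra/\frg\fra \xrightarrow{\sim} x\fra/\frg x\fra$. Setting $\beta = x\alpha$ and invoking the very definition $\xi^x(-\alpha) = \xi(-x\alpha)$ transforms this sum into
\[
N\frg^{-1} \sum_{\alpha \in \fra/\frg\fra} \chi_{x\fra}(x\alpha)\, \xi^x(-\alpha).
\]
Comparing with $c_\chi(\xi^x) = N\frg^{-1} \sum_\alpha \chi_\fra(\alpha)\, \xi^x(-\alpha)$, the lemma reduces to the pointwise identity $\chi_{x\fra}(x\alpha) = \chi_\fra(\alpha)$ for all $\alpha \in \fra/\frg\fra$.

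The next step is to verify this identity using Lemma \ref{lem: ray class group}. For a totally positive lift $\alpha$ of an element of $(\fra/\frg\fra)^\times$, the value $\chi_\fra(\alpha)$ is $\chi$ evaluated on the narrow ray class of $\fra^{-1}\alpha$, while $\chi_{x\fra}(x\alpha)$ is $\chi$ evaluated on the narrow ray class of $(x\fra)^{-1}(x\alpha) = \fra^{-1}\alpha$. Since these two fractional ideals literally coincide, the two values agree. Outside of $(\fra/\frg\fra)^\times$ both sides vanish by the extension-by-zero convention, using that multiplication by $x$ bijects $(\fra/\frg\fra)^\times$ onto $(x\fra/\frg x\fra)^\times$. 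As a conceptual alternative, one could derive $\chi_{x\fra}(x\alpha) = \chi_\fra(\alpha)$ directly from \eqref{eq: chi_fra} applied to $\fra$ and the principal ideal $(x)$, noting that $\chi_{(x)}(x) = 1$ because the ray class of $(x)^{-1}x = 1$ is trivial.

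I do not expect a genuine obstacle here; the statement is essentially a compatibility check, asserting that the three ingredients entering $c_\chi$---the Fourier measure on $\fra/\frg\fra$, the torsion point $\xi$, and the local character $\chi_\fra$---all transform consistently under the $F_+^\times$-action \eqref{eq: action} on $\bbT$. The only point requiring a moment of care is confirming that the change of variables interacts correctly with the extension-by-zero from $(\fra/\frg\fra)^\times$ to $\fra/\frg\fra$, and this is immediate from the module isomorphism induced by multiplication by $x$.
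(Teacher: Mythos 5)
Your proof is correct and follows the same route as the paper: a change of variables $\beta = x\alpha$ in the finite Fourier sum, reducing the claim to the identity $\chi_{x\fra}(x\alpha)=\chi_\fra(\alpha)$. The paper's one-line proof simply performs this substitution and absorbs that identity silently, whereas you spell it out (via Lemma \ref{lem: ray class group} or via \eqref{eq: chi_fra}), which is a welcome clarification but not a different argument.
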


\begin{proof}
	We have
	\[
		c_{\chi}(\xi)=N\frg^{-1}\sum_{\beta\in x\fra/\frg x\fra}\chi_{x\fra}(\beta)\xi(-\beta)
		=N\frg^{-1}\sum_{\alpha\in\fra/\frg\fra}\chi_\fra(\alpha)\xi(-x\alpha)
		 = c_{\chi}(\xi^x)
	\]
	as desired.
\end{proof}

Let
$\sT[\frg]\coloneqq\bigl(\coprod_{\fra\in\frI}\bbT^\fra[\frg]\bigr)/F_+^\times$,
which is a finite set.
The Hecke $L$-function $L(\chi,s)$ associated to a finite Hecke character
$\chi\colon\Cl^+_F(\frg)\rightarrow\bbC^\times$ is defined by the series
\[
    L(\chi,s)=\sum_{\fra\subset\cO_F}\chi(\fra)N\fra^{-s},
\]
which is absolutely convergent for $\Re(s)>1$.
The Hecke $L$-function may be expressed in terms of the Lerch zeta functions as follows.

\begin{proposition}\label{prop: Hecke}
	Let $\chi\colon\Cl^+_F(\frg)\rightarrow\bbC^\times$ be a finite Hecke  character  and let  $L(\chi,s)$ be
	the Hecke $L$-function of $\chi$.
	Then we have
	\begin{equation}\label{eq: first eq}
		L(\chi,s)=\sum_{\xi\in\sT[\frg]}c_{\chi}(\xi)\cL(\xiDelta,s).
	\end{equation}
\end{proposition}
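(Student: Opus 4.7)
The plan is to rewrite the defining sum of $L(\chi,s)$ by parametrizing integral ideals $\frb$ prime to $\frg$ via Lemma \ref{lem: ray class group}, and then to apply the $\Delta$-equivariant Fourier inversion formula \eqref{eq: equivariant Fourier} to $\chi_\fra$ to extract the Lerch zeta functions. Throughout, the key bookkeeping is that all the quantities appearing (the Lerch zeta functions, the Fourier coefficients $c_\chi(\xi)$) descend to the quotient $\sT[\frg]$ by the previously established lemmas.

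First, I fix a set $\frC\subset\frI$ of representatives for $\Cl^+_F(1)$. For any integral ideal $\frb$ prime to $\frg$, there is a unique $\fra\in\frC$ in the inverse narrow class of $\frb$, and hence an element $\alpha\in\fra_+$, unique modulo $\Delta$, with $\frb=\fra^{-1}(\alpha)$; the residue class of $\alpha$ lies in $(\fra/\frg\fra)^\times$ precisely because $\frb$ is prime to $\frg$. By Lemma \ref{lem: ray class group} this gives a bijection between integral ideals prime to $\frg$ and pairs $(\fra,[\alpha])$ with $\fra\in\frC$ and $[\alpha]\in\Delta\backslash(\fra/\frg\fra)^\times$, under which $N\frb=N(\fra^{-1}\alpha)$ and $\chi(\frb)=\chi_\fra(\alpha)$. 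Since $\chi_\fra$ extends by zero to all of $\fra/\frg\fra$, extending the inner index back to $\Delta\backslash\fra_+$ costs nothing, yielding
\[
L(\chi,s) = \sum_{\fra\in\frC}\sum_{\alpha\in\Delta\backslash\fra_+}\chi_\fra(\alpha)\,N(\fra^{-1}\alpha)^{-s}.
\]

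Second, since $\chi_\fra$ is $\Delta$-invariant as a function on $\fra/\frg\fra$, the Fourier inversion formula \eqref{eq: equivariant Fourier} gives
\[
\chi_\fra(\alpha)=\sum_{\xi\in\bbT^\fra[\frg]/\Delta} c_\chi(\xi)\,\xiDelta(\alpha).
\]
Substituting this into the expression for $L(\chi,s)$ and swapping the (finite outer and absolutely convergent inner) sums for $\Re(s)>1$, the inner sum over $\alpha\in\Delta\backslash\fra_+$ collapses to the Lerch zeta function $\cL(\xiDelta,s)$ by its definition \eqref{eq: Lerch}. Thus
\[
L(\chi,s)=\sum_{\fra\in\frC}\sum_{\xi\in\bbT^\fra[\frg]/\Delta} c_\chi(\xi)\,\cL(\xiDelta,s).
\]

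Finally, I would identify the outer double sum with the sum over $\sT[\frg]$. Since $\frC$ represents $\frI/P_+$, the natural map
$\coprod_{\fra\in\frC}\bigl(\bbT^\fra[\frg]/\Delta\bigr)\longrightarrow\sT[\frg]$
is a bijection; and by Lemma \ref{lem: two} together with the preceding lemma on Lerch zeta functions, both $c_\chi(\xi)$ and $\cL(\xiDelta,s)$ depend only on the class of $\xi$ in $\sT_\tors$, so the summand is well-defined on $\sT[\frg]$. This yields the desired formula \eqref{eq: first eq}.

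The main obstacle is the first step: one must set up the dictionary of Lemma \ref{lem: ray class group} explicitly enough to match the Euler-type sum defining $L(\chi,s)$ with a sum of the shape appearing in the definition of the Lerch zeta functions. Once this parametrization is in place, the Fourier inversion trick together with the invariance statements of Lemma \ref{lem: two} make the identification formal.
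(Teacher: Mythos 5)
Your proposal is correct and takes essentially the same route as the paper's proof: fix representatives $\frC$ of $\Cl^+_F(1)$, rewrite the sum over integral ideals as a double sum over $\fra\in\frC$ and $\alpha\in\Delta\backslash\fra_+$ using the extension by zero of $\chi_\fra$, apply the $\Delta$-equivariant Fourier inversion \eqref{eq: equivariant Fourier}, and identify the resulting double sum with the sum over $\sT[\frg]$. The only cosmetic difference is that you first restrict to ideals prime to $\frg$ via Lemma \ref{lem: ray class group} and then re-extend, whereas the paper invokes the extension by zero directly; the content is identical.
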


\begin{proof}
    In what follows, we let $\frC\subset\frI$ be a set of representatives of the group $\Cl^+_F(1)$.
	Then $\{\fra^{-1}\mid\fra\in\frC\}$ also represents the group $\Cl^+_F(1)$, 
	and the set of integral ideals of $F$ is given by 
	$\{ \fra^{-1}\alpha \mid \fra\in\frC, \alpha\in\fra_+ \}$.
	By definition of the Hecke $L$-function, we have
	\begin{align*}
		L(\chi,s)&=\sum_{\fra\subset\cO_F} \chi(\fra)N\fra^{-s}
		=\sum_{\fra\in\frC}\sum_{\alpha\in\Delta\backslash\fra_+}
		\chi(\fra^{-1}\alpha)N(\fra^{-1}\alpha)^{-s}
		=\sum_{\fra\in\frC} \sum_{\alpha\in\Delta\backslash\fra_+}
		\chi_\fra(\alpha)N(\fra^{-1}\alpha)^{-s}\\
		&
		=\sum_{\fra\in\frC}\sum_{\alpha\in\Delta\backslash\fra_+}
		\sum_{\xi\in\bbT^\fra[\frg]/\Delta}  c_{\chi}(\xi) \xiDelta(\alpha) N(\fra^{-1}\alpha)^{-s}
		=\sum_{\substack{\fra\in\frC\\\xi\in\bbT^\fra[\frg]/\Delta}} c_{\chi}(\xi)\cL(\xiDelta,s),
	\end{align*}
	where the fourth equality follows from \eqref{eq: equivariant Fourier} and 
	the last equality follows from the definition of the Lerch zeta function \eqref{eq: Lerch}.
	This proves our equality \eqref{eq: first eq}, since we have a natural bijection 
	$\sT[\frg]\cong\coprod_{\fra\in\frC}(\bbT^\fra[\frg]/\Delta)$.
\end{proof}

Recall that a Hecke character $\chi\colon\Cl^+_F(\frg)\rightarrow\bbC^\times$ is 
called \emph{primitive} of conductor $\frg$ if it does not factor through $\Cl^+_F(\frg')$ 
for any $\frg'\supsetneq\frg$. Similarly, we say that an additive character 
$\xi\in\bbT^\fra[\frg]$ is \emph{primitive} if it does not factor through $\fra/\frg'\fra$ 
for any $\frg'\supsetneq\frg$. 
When the Hecke character is primitive, we may restrict the sum in \eqref{eq: first eq} 
to the subset of primitive additive characters. 
We denote by $\bbT^\fra_\prim[\frg]$ 
the set of primitive elements in $\bbT^\fra[\frg]$, and set 
$\sT_\prim[\frg]\coloneqq\bigl(\coprod_{\fra\in\frI}\bbT^\fra_\prim[\frg]\bigr)/F_+^\times$. 

\begin{proposition}\label{prop: Hecke primitive}
	If a finite Hecke character $\chi\colon\Cl^+_F(\frg)\rightarrow\bbC^\times$ is primitive, 
	then $c_\chi(\xi)=0$ holds for any non-primitive $\xi\in\bbT^\fra[\frg]$.
	Thus we have 
	\begin{equation}\label{eq: Hecke-Lerch primitive}
		L(\chi,s)=\sum_{\xi\in\sT_\prim[\frg]}c_{\chi}(\xi)\cL(\xiDelta,s).
	\end{equation}
\end{proposition}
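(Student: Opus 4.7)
The plan is to reduce everything to the key vanishing statement: $c_\chi(\xi) = 0$ for every non-primitive $\xi \in \bbT^\fra[\frg]$. Granting this, the formula \eqref{eq: Hecke-Lerch primitive} follows immediately from Proposition \ref{prop: Hecke} by discarding the terms indexed by non-primitive $\xi$ in the sum over $\sT[\frg]$, since the remaining sum is naturally indexed by $\sT_\prim[\frg]$.

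To establish the vanishing, I would first exploit primitivity of $\chi$ to extract a convenient element. Suppose $\xi$ factors through $\fra/\frg'\fra$ for some ideal $\frg' \supsetneq \frg$, so that $\xi$ annihilates $\frg'\fra/\frg\fra$. Primitivity of $\chi$ means precisely that $\chi$ is nontrivial on the kernel of the natural surjection $\Cl^+_F(\frg) \twoheadrightarrow \Cl^+_F(\frg')$. A standard weak-approximation argument shows that this kernel is represented by classes $[(c)]$ with $c \in \cO_F \cap F_+^\times$, $(c,\frg) = 1$, and $c \equiv 1 \pmods{\frg'}$. I would therefore fix such a $c$ with the additional property $\chi((c)) \neq 1$.

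With this $c$ in hand, the plan is to apply the substitution $\beta \mapsto c\beta$ to the defining sum
\[
    c_\chi(\xi) = N\frg^{-1} \sum_{\beta \in \fra/\frg\fra} \chi_\fra(\beta)\, \xi(-\beta).
\]
Since $c$ is a unit modulo $\frg$, this substitution is a bijection of $\fra/\frg\fra$. The definition of $\chi_\fra$ together with multiplicativity of $\chi$ on ideals prime to $\frg$ gives $\chi_\fra(c\beta) = \chi((c))\, \chi_\fra(\beta)$, which is essentially a rephrasing of \eqref{eq: chi_fra}. Meanwhile, $c - 1 \in \frg'$ forces $(c-1)\beta \in \frg'\fra$, and since $\xi$ is trivial there we get $\xi(-c\beta) = \xi(-\beta)\, \xi(-(c-1)\beta) = \xi(-\beta)$. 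The substituted sum therefore equals $\chi((c)) \cdot c_\chi(\xi)$, and combining with $\chi((c)) \neq 1$ forces $c_\chi(\xi) = 0$, as desired.

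The main obstacle I anticipate is the careful justification of the weak-approximation step producing an integral representative $c$ satisfying all three conditions simultaneously (integrality, coprimality to $\frg$, and the congruence modulo $\frg'$), starting only from the abstract non-factorization hypothesis on $\chi$. The remaining computations---the multiplicativity of $\chi_\fra$ under scaling by $c$, and the invariance of $\xi(-c\beta)$---are direct algebraic manipulations using the definitions reviewed in this subsection.
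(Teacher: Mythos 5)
Your proposal is correct and follows essentially the same route as the paper's own proof: both isolate an integral, totally positive $\gamma$ (your $c$) prime to $\frg$ with $\gamma\equiv 1\pmod{\frg'}$ and $\chi((\gamma))\neq 1$, apply the change of variable $\beta\mapsto\gamma\beta$ in the defining sum for $c_\chi(\xi)$, and use the multiplicativity \eqref{eq: chi_fra} together with triviality of $\xi$ on $\frg'\fra/\frg\fra$ to conclude $c_\chi(\xi)=\chi((\gamma))c_\chi(\xi)$, hence $c_\chi(\xi)=0$. The formula \eqref{eq: Hecke-Lerch primitive} then follows at once from Proposition \ref{prop: Hecke}, exactly as you say. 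The step you flag as the potential obstacle (producing such a $\gamma$) is precisely what the paper takes for granted; your appeal to the standard description of the kernel of $\Cl^+_F(\frg)\twoheadrightarrow\Cl^+_F(\frg')$ by totally positive integral representatives congruent to $1$ mod $\frg'$ is the right justification, so no genuine gap remains.
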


\begin{proof}
	Let $\xi\in\bbT^\fra[\frg']$ for some $\frg'\supsetneq\frg$. 
	Since $\chi$ does not factor through $\Cl^+_F(\frg')$, 
	there exists an element  $\gamma\in\cO_{F+}$ prime to $\frg$ such that
	$\gamma\equiv 1\pmod{\frg'}$ and $\chi_{\cO_F}(\gamma)\neq 1$.
	Then we have $\xi(\alpha)=\xi(\gamma\alpha)$ for any $\alpha\in\fra$, hence 
	\begin{align*}
		c_\chi(\xi)&=N\frg^{-1}\sum_{\alpha\in\fra/\frg\fra}\chi_\fra(\alpha)\xi(-\alpha)
		=N\frg^{-1}\sum_{\alpha\in\fra/\frg\fra}\chi_\fra(\gamma\alpha)\xi(-\gamma\alpha)\\
	    &=N\frg^{-1}\chi_{\cO_F}(\gamma)\sum_{\alpha\in\fra/\frg\fra}\chi_\fra(\alpha)\xi(-\alpha)
	    =\chi_{\cO_F}(\gamma)c_\chi(\xi).
	\end{align*}
	This shows that $c_\chi(\xi)=0$ as desired, and the identity 
	\eqref{eq: Hecke-Lerch primitive} follows immediately from \eqref{eq: first eq}. 
\end{proof}

\begin{remark}
	By \cite{stacks}*{Tag 04TK Theorem 17.3}, the quotient
	\[
		\sT\coloneqq \bbT/F_+^\times
	\] 
	of $\bbT\coloneqq\coprod_{\fra\in\frI}\bbT^\fra$ with respect to the action of $F_+^\times$
	is an algebraic stack.  Then the set $\sT(\ol\bbQ)$ of Definition \ref{def: stack} is the
	set of $\ol\bbQ$-valued points of $\sT$.
	By construction, we have an isomorphism
	$
		\sT \cong \coprod_{\fra\in\frC}(\bbT^\fra/\Delta)
	$
	as algebraic stacks,
	where $\frC$ is a set of fractional ideals of $F$ representing the classes of $\Cl_F^+(1)$.
\end{remark}

%%%%%%%%%%%%%%%%%%%%%%%%%%%%%%%%%%%%%%%%%%%%%%%%%%
%
\subsection{The Shintani Generating Class}\label{subsection: equivariant}
%
%%%%%%%%%%%%%%%%%%%%%%%%%%%%%%%%%%%%%%%%%%%%%%%%%%

In this section, we construct the Shintani generating class following \cite{BHY19}*{\S 4},
however as an equivariant cohomology class in $U\coloneqq\coprod_{\fra\in\frI} U^\fra$
with respect to the action of $F_+^\times$,
where $U^\fra\coloneqq\bbT^\fra\setminus\{1\}$ for any $\fra\in\frI$.

We first review the definition of equivariant cohomology.
Let $G$ be a group with identity $e$.  In what follows, we let $X$ be a \textit{$G$-scheme},
i.e., a scheme $X$ equipped with a right action of $G$.
We denote by  $[u]\colon X \rightarrow X$ the action of $u\in G$, so that $[uv] = [v]\circ[u]$ for any $u,v\in G$ holds.
A \textit{$G$-equivariant sheaf} $\sF$ on $X$ is an $\sO_X$-module with isomorphisms
$\iota_u\colon[u]^*\sF\cong\sF$ for any $u\in G$, such that
$\iota_e=\id$ and the diagram
\[\xymatrix{
		 [uv]^* \sF \ar[r]^-{\iota_{uv}}\ar@{=}[d] &  \sF  \\
		 [u]^*[v]^* \sF  \ar[r]^-{[u]^*\iota_v} &   [u]^*\sF \ar[u]_{\iota_u}
	}\]
is commutative for any $u, v\in G$.
The structure sheaf $\sO_X$ itself is naturally a $G$-equivariant sheaf. 
The category of $G$-equivariant sheaves has enough injectives 
by \cite{Gro57}*{Proposition 5.1.1 and Th\'{e}or\`{e}me 1.10.1}. 
We define the \textit{equivariant global section} of a $G$-equivariant sheaf $\sF$ by 
\[
	\Gamma(X/G,\sF)\coloneqq\Hom_{\bbZ[G]}(\bbZ,\Gamma(X,\sF))=\Gamma(X,\sF)^G.
\]

\begin{definition}
	For any integer $m$, we define the equivariant cohomology $H^m(X/G,\sF)$ 
	of a $G$-equivariant sheaf $\sF$ to be the group obtained by applying to $\sF$ the 
	$m$-th derived functor of $\Gamma(X/G,-)$. 
\end{definition}

Let $\pi\colon G\rightarrow H$ be a homomorphism between groups.
For a $G$-scheme $X$ and an $H$-scheme $Y$, we say that a morphism of schemes $f\colon X\rightarrow Y$
is \textit{equivariant}, if the actions of $G$ and $H$ are compatible with $f$ through $\pi$.
For such $f$,
if $\sF$ is an $H$-equivariant sheaf on $Y$, then $f^*\sF$ naturally has a structure of a $G$-equivariant
sheaf on $X$, and $f$ induces a pullback morphism on equivariant cohomology
\[
	f^*\colon H^m(Y/H,\sF)\rightarrow H^m(X/G, f^*\sF).
\]

We now consider the scheme $\bbT$ of \S \ref{subsection: Lerch}.
Again, we let $F$ be a totally real field of degree $g\coloneqq[F\colon\bbQ]$, 
and we denote by $F_+^\times$ the set of totally positive elements in $F^\times$.
For any fractional ideal $\fra\in\frI$, we let $\bbT^\fra\coloneqq\Hom_\bbZ(\fra,\bbG_m)$.
Then by the action given in \eqref{eq: action}, the scheme 
$\bbT\coloneqq\coprod_{\fra\in\frI}\bbT^\fra$ is an $F_+^\times$-scheme. 
For any $x\in F_+^\times$, the isomorphism $\bra{x}\colon\bbT\cong\bbT$ is the collection 
of isomorphisms $\bra{x}\colon\bbT^{x\fra}\cong\bbT^{\fra}$ induced from the 
isomorphism $\fra\cong x\fra$ given by the multiplication by $x$ for any $\fra\in\frI$.

\begin{remark}\label{rem: description}
	Let $\frC$ be a set of fractional ideals of $F$ representing the classes of $\Cl^+_F(1)$.
	Then the category of $F_+^\times$-equivariant sheaves on $\bbT$ is equivalent to the 
	category of $\Delta$-equivariant sheaves on $\coprod_{\fra\in\frC}\bbT^\fra$.
\end{remark}

Next for $\fra\in\frI$, let $U^\fra\coloneqq\bbT^\fra\setminus\{1\}$.
Then any $x\in F_+^\times$ induces an 
isomorphism $\bra{x}\colon U^{x\fra}\rightarrow U^{\fra}$, hence 
$U\coloneqq\coprod_{\fra\in\frI} U^\fra$ is also an $F_+^\times$-scheme.
%The sheaf $\sO_{\bbT}(\bsk)$ of Definition \ref{def: twist} induces an equivariant sheaf on $U$.
We will now construct the \textit{equivariant \v Cech complex},
which may be used to express the equivariant cohomology of
$U$ with coefficients in an equivariant sheaf $\sF$ on $U$.

\begin{definition}%\label{def: A}
	For any fractional ideal $\fra$ in $\frI$, 
	we say that $\alpha\in\fra_+$ is \textit{primitive}, if $\alpha/N\not\in\fra_+$ for any integer $N>1$.
	We let $\sA_\fra\subset\fra_+$ be the set of primitive elements of $\fra_+$.
\end{definition}

If we let $U^\fra_\alpha\coloneqq\bbT^\fra\setminus\{t^\alpha=1\}$ for any $\alpha\in\sA_\fra$,
then $\frU^\fra\coloneqq\{U^\fra_\alpha\}_{\alpha\in \sA_\fra}$ gives an affine open covering of $U^\fra$ 
with a natural action of $\Delta$, and 
$\frU\coloneqq\{ U^\fra_\alpha\}_{\fra\in\frI,\alpha\in\sA_\fra}$ gives an affine open covering of 
$U=\coprod_{\fra\in\frI}U^\fra$ with a natural action on $F_+^\times$.
Let $q$ be an integer $\geq0$. For any $\bsalpha=(\alpha_0,\ldots,\alpha_q)\in\sA_\fra^{q+1}$, we let 
$U^\fra_\bsalpha\coloneqq U^\fra_{\alpha_0}\cap\cdots\cap U^\fra_{\alpha_q}$.
For an $F^\times_+$-equivariant sheaf $\sF$ on $U$, we write $\sF_\fra$ for the
restriction of $\sF$ to $U^\fra$, and denote by 
\begin{equation*}%\label{eq: alternating}
	\prod_{\bsalpha\in\sA_\fra^{q+1}}^\alt \Gamma(U^\fra_\bsalpha,\sF_\fra)
\end{equation*}
the subgroup of $ \prod_{\bsalpha\in\sA_\fra^{q+1}} \Gamma(U^\fra_\bsalpha,\sF_\fra)$ 
consisting of collections $\bss=(s_\bsalpha)$ of sections 
such that $s_{\rho(\bsalpha)} = \sgn(\rho)s_\bsalpha$ for any $\rho\in\frS_{q+1}$ and $s_\bsalpha=0$ if
$\alpha_i=\alpha_j$ for some $i\neq j$.

For any $x\in F_+^\times$ and $\bsalpha=(\alpha_0,\ldots,\alpha_q)\in\sA^{q+1}_\fra$, 
let $x\bsalpha=(x\alpha_0,\ldots,x\alpha_q)$. 
Then $\bra{x}\colon U^{x\fra}\isomto U^\fra$ induces 
$U^{x\fra}_{x\bsalpha}\isomto U^\fra_{\bsalpha}$, and 
we have isomorphisms 
\[
	\Gamma(U^\fra_{\bsalpha},\sF_{\fra}) \overset{\bra{x}^{*}}{\cong}\Gamma(U^{x\fra}_{x\bsalpha},
	\bra{x}^{*}\sF_{\fra})
	\overset{\iota^\fra_x}{\cong}\Gamma(U^{x\fra}_{x\bsalpha},\sF_{x\fra}),
\] 
which define a natural action of the group $F_+^\times$ on 
$\prod_{\fra\in\frI}\prod_{\bsalpha\in\sA_\fra^{q+1}}^\alt \Gamma(U^{\fra}_\bsalpha,\sF_\fra)$.
The \v{C}ech complex $C^\bullet(\frU/F_+^\times,\sF)$ is defined by
\[
	C^q(\frU/F_+^\times,\sF)\coloneqq
	\Biggl(\prod_{\fra\in\frI}
	\prod_{\bsalpha\in\sA_\fra^{q+1}}^\alt \Gamma(U^\fra_\bsalpha,\sF_\fra)\Biggr)^{F_+^\times}
\]
for each integer $q\geq 0$, 
with the differential $d^q\colon C^q(\frU/F_+^\times,\sF)\rightarrow 
C^{q+1}(\frU/F_+^\times,\sF)$ given by the alternating sum
\begin{equation}\label{eq: standard}
	(d^qf)_{\alpha_0\cdots\alpha_{q+1}}\coloneqq\sum_{j=0}^{q+1}(-1)^j
	f_{\alpha_0\cdots\breve\alpha_j\cdots\alpha_{q+1}}\big|_{U_{(\alpha_0,\cdots,\alpha_{q+1})}}.
\end{equation}
Then we have the following.

\begin{proposition}\label{prop: standard}
	Let $\sF$ be an $F^\times_+$-equivariant sheaf on $U$, and 
	assume that $\sF$ is quasi-coherent as an $\sO_U$-module. 
	Then for any integer $m\geq0$, the equivariant cohomology $H^m(U/F_+^\times,\sF)$ is given as
	\[
		H^m(U/F_+^\times,\sF)=H^m(C^\bullet(\frU/F_+^\times,\sF)).
	\]
\end{proposition}

\begin{proof}
This can be shown in the same way as \cite{BHY19}*{Corollary 3.6}. 
For the reader's convenience and for a later reference (\S\ref{subsection: p-adic polylog}), 
we briefly recall the argument. 

We define a complex of $F^\times_+$-equivariant sheaves $\sC^\bullet(\frU,\sF)$ on $U$ by 
\[\sC^q(\frU,\sF)\coloneqq \prod_{\fra\in\frI}\prod_{\bsalpha\in\sA_\fra^{q+1}}^\alt j_{\bsalpha*}j_{\bsalpha}^*\sF, \]
where $j_\bsalpha$ denotes the open immersion $U^\fra_\bsalpha\to U$. 
Then we have an exact sequence $0\to \sF\to \sC^\bullet(\frU,\sF)$ of equivariant sheaves. 
Since $C^\bullet(\frU/F_+^\times,\sF)=\Gamma(U/F^\times_+,\sC^\bullet(\frU,\sF))$, 
it suffices to show that each $\sC^q(\frU,\sF)$ is acyclic with respect to $\Gamma(U/F^\times_+,-)$. 

First let us observe that $\sC^q(\frU,\sF)$ is acyclic with respect to $\Gamma(U,-)$. 
Indeed, for any $m>0$, we have
\[
H^m(U, \sC^q(\frU, \sF)) 
\cong \prod_{\fra\in\frI}\prod_{\bsalpha\in\sA_\fra^{q+1}}^\alt H^m(U, j_{\bsalpha*}j_{\bsalpha}^*\sF)
\cong \prod_{\fra\in\frI}\prod_{\bsalpha\in\sA_\fra^{q+1}}^\alt H^m(U^\fra_\bsalpha, j_{\bsalpha}^*\sF)
=0. 
\]
Here we used the fact that $U^\fra_\bsalpha$ is affine, and the commutativity of the cohomology and the product: 
\begin{equation}\label{eq: cohomology and product}
H^m\biggl(U,\prod_{\lambda\in\Lambda}\sF_\lambda\biggr)\cong \prod_{\lambda\in\Lambda}H^m(U,\sF_\lambda), 
\end{equation}
where $(\sF_\lambda)_{\lambda\in\Lambda}$ is a family of quasi-coherent $\sO_U$-modules \cite{BHY19}*{Lemma 3.5} 
(note that the product $\prod_\lambda \sF_\lambda$ is taken not in the category of quasi-coherent $\sO_U$-modules 
but that of $\sO_U$-modules, though each $\sF_\lambda$ is quasi-coherent). 

Next we note that $H^a(F^\times_+,H^0(U,\sC^q(\frU,\sF)))=0$ for $a>0$. 
Indeed, we have 
\[H^0(U,\sC^q(\frU,\sF))=\prod_{\fra\in\frI}
\prod_{\bsalpha\in\sA_\fra^{q+1}}^\alt \Gamma(U^\fra_\bsalpha,\sF_\fra)
\cong \prod_{\fra\in\frI}
\prod_{\substack{\bsalpha=(\alpha_0,\ldots,\alpha_q)\in\sA_\fra^{q+1}\\
\alpha_0<\cdots<\alpha_q}} \Gamma(U^\fra_\bsalpha,\sF_\fra),\]
where $<$ is a fixed ordering on the set $\sA_\fra$ which is invariant under the $F^\times_+$-action 
(e.g., the order with respect to an embedding $F\to\bbR$). 
Then we see that this is a coinduced $F^\times_+$-module since the action of $F^\times_+$ on the set 
$\{(\fra,\bsalpha)\mid \fra\in\frI,\,\bsalpha=(\alpha_0,\ldots,\alpha_q)\in\sA_\fra^{q+1},\,\alpha_0<\cdots<\alpha_q\}$
is free. 

Finally, we use the spectral sequence 
\[E_2^{a,b}=H^a(F^\times_+,H^b(U,\sC^q(\frU,\sF)))\Longrightarrow H^{a+b}(U/F^\times_+,\sC^q(\frU,\sF)) \]
to see the desired acyclicity of $\sC^q(\frU,\sF)$. 
The existence of this spectral sequence follows from the fact that the functor 
\[\Gamma(U,-)\colon (\text{$F^\times_+$-equivariant sheaves on $U$})\to (\text{$F^\times_+$-modules})\] 
sends injective objects to acyclic ones with respect to $\Gamma(F^\times_+,-)$ 
(see \cite{Gro57}*{Lemme 5.6.1}). 
\end{proof}

Now we proceed to the construction of the Shintani generating class.
Note that we have a canonical isomorphism
\[
	F\otimes\bbR\cong\bbR^I\coloneqq \prod_{\tau\in I}\bbR,  \qquad 
	\alpha\otimes 1 \mapsto (\alpha^\tau),
\]
where $I=\Hom(F,\ol\bbQ)$
and we let $\alpha^\tau\coloneqq\tau(\alpha)$ for any embedding $\tau\in I$.
We denote by $\bbR^I_+\coloneqq\prod_{\tau\in I}\bbR_+$
the set of totally positive elements of $\bbR^I$, where $\bbR_+$ is the set of positive real numbers. 

\begin{definition}%\label{def: cone}
	An \textit{$F$-rational closed polyhedral cone} in $\bbR^I_+\cup\{0\}$, 
	which we also call simply a \textit{cone}, is a set of the form
	\[
		\sigma_\bsalpha\coloneqq\{  x_1 \alpha_1+\cdots+x_m\alpha_m \mid x_1,\ldots,x_m \in\bbR_{\geq0}\}
	\]
	for some $m\geq 0$ and $\bsalpha=(\alpha_1,\ldots,\alpha_m)\in F_+^m$. 
	In this case, we say that  $\bsalpha$ is a generator of $\sigma_\bsalpha$. 
	A cone is said to be \emph{simplicial},
	if it has a generator linearly independent over $\bbR$. 
\end{definition}

%By considering the case $m=0$, we see that $\sigma=\{0\}$ is a cone. 
We define the dimension $\dim\sigma$ of a cone $\sigma$ to be the dimension 
of the $\bbR$-vector space generated by $\sigma$. 
Note that, when a fractional ideal $\fra$ is given, 
any $m$-dimensional simplicial cone has a generator belonging to $\sA_\fra^m$, 
unique up to permutation. 

In what follows, we fix a numbering $I=\{\tau_1,\ldots,\tau_g\}$ of elements in $I$.
For any subset $R\subset\bbR_+^I\cup\{0\}$, we let
\[
	\breve R\coloneqq\{(u_{\tau_1},\ldots,u_{\tau_g})\in\bbR_+^I \mid\exists\delta>0,\,0<\forall\delta'<\delta,(u_{\tau_1},\ldots,u_{\tau_{g-1}},
	u_{\tau_g}-\delta')\in R\}.
\]
Then, for a cone $\sigma$, $\breve{\sigma}$ is nonempty if and only if $\sigma$ is $g$-dimensional
(cf.~\cite{Yam10}*{Lemma 5.3}). 

We define the function $\cG^\fra_\sigma(t)$ following \cite{BHY19}*{Lemma 4.1} as follows.

\begin{definition}\label{def: G}
	Let $\fra\in\frI$ and $\bsalpha=(\alpha_1,\ldots,\alpha_g)\in\sA_\fra^g$, 
	and put $\sigma=\sigma_\bsalpha$. Then we define 
	\[
		\cG^\fra_\sigma(t)\coloneqq 
		\frac{\sum_{\alpha\in\breve P_\bsalpha\cap\fra} t^\alpha}{(1-t^{\alpha_1})\cdots(1-t^{\alpha_g})}
	    \in\Gamma(U^\fra_\bsalpha,\sO_{\bbT}),
	\]
	where $P_\bsalpha\coloneqq\{ x_1\alpha_1+\cdots+x_g\alpha_g\mid
	\forall i\,\,0\leq x_i < 1\}$ is the parallelepiped spanned by $\alpha_1,\ldots,\alpha_g$.	
\end{definition}

The notation $\cG^\fra_\sigma(t)$ is justified by the fact that 
the generator $\bsalpha\in\sA_\fra^g$ is determined up to permutation by $\fra$ and $\sigma$ 
if $\dim\sigma=g$, while $\cG^\fra_\sigma(t)$ is zero if $\dim\sigma<g$ 
since $\breve{P}_\bsalpha=\emptyset$ in this case. 
We may also use the formal expansion 
\[	
	\cG^\fra_\sigma(t)=\sum_{\alpha\in\breve\sigma\cap\fra}t^\alpha, 
\]
which depends only on $\fra$ and $\sigma$. 
%hence $\cG_\sigma(t)$ depends only on the cone $\sigma$ and does not depend on the choice of the basis $\bsalpha$.
For any $\bsalpha=(\alpha_1,\ldots,\alpha_g)\in\sA_\fra^g$, 
let $\bigl(\alpha_j^{\tau_i}\bigr)$ be the matrix in $M_g(\bbR)$ whose $(i,j)$-component is $\alpha_j^{\tau_i}$.
We let $\sgn(\bsalpha)\in\{0,\pm1\}$ be the signature of  $\det\bigl(\alpha_j^{\tau_i}\bigr)$.
The Shintani generating class is constructed as in \cite{BHY19}*{Proposition 4.2} as follows.

\begin{proposition}%\label{prop: generating}
	For any $\fra\in\frI$ and $\bsalpha\in\sA_\fra^g$, let
	\[
		\cG^\fra_\bsalpha
		\coloneqq\sgn(\bsalpha)\cG^\fra_{\sigma_\bsalpha}(t)\in\Gamma(U^\fra_\bsalpha,\sO_{\bbT}).
	\]
	Then $(\cG^\fra_\bsalpha)$ is a cocycle in $C^{g-1}(\frU/F_+^\times,\sO_{\bbT})$, 
	hence defines a cohomology class
	\[
		\cG\coloneqq[\cG^\fra_\bsalpha]\in H^{g-1}(U/F_+^\times,\sO_{\bbT}),
	\]
	which we call the Shintani generating class.
\end{proposition}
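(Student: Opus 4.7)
The plan is first to verify that $(\cG^\fra_\bsalpha)$ is a well-defined element of $C^{g-1}(\frU/F_+^\times,\sO_\bbT)$ and then to establish the cocycle relation. For the former, three properties must be checked. (i) Each $\cG^\fra_{\sigma_\bsalpha}(t)$ is a genuine section of $\sO_\bbT$ over $U^\fra_\bsalpha$, because every $1-t^{\alpha_i}$ is invertible on $U^\fra_{\alpha_i}\supseteq U^\fra_\bsalpha$. (ii) Since the cone $\sigma_\bsalpha$ and its parallelepiped $P_\bsalpha$ depend only on the unordered tuple, so does $\cG^\fra_{\sigma_\bsalpha}(t)$; combined with $\sgn(\rho(\bsalpha))=\sgn(\rho)\sgn(\bsalpha)$ for $\rho\in\frS_g$, this supplies the alternating property. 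If two entries of $\bsalpha$ coincide then $\dim\sigma_\bsalpha<g$, hence $\breve{P}_\bsalpha=\emptyset$ and $\cG^\fra_{\sigma_\bsalpha}(t)=0$. (iii) For the $F_+^\times$-equivariance, the isomorphism $\bra{x}\colon\bbT^{x\fra}\to\bbT^\fra$ satisfies $\bra{x}^*t^\alpha=t^{x\alpha}$, and multiplication by $x\in F_+^\times$ carries $\breve{P}_\bsalpha\cap\fra$ bijectively to $\breve{P}_{x\bsalpha}\cap x\fra$; since $x$ is totally positive, $\sgn(x\bsalpha)=\sgn(\bsalpha)$, yielding $\bra{x}^*\cG^\fra_\bsalpha=\cG^{x\fra}_{x\bsalpha}$, so the family lies in the $F_+^\times$-invariants.

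The main obstacle is the cocycle relation $d^{g-1}(\cG^\fra_\bsalpha)=0$ in $C^g(\frU/F_+^\times,\sO_\bbT)$. By Remark \ref{rem: description} and the proof of Proposition \ref{prop: standard}, after fixing a set $\frC$ of representatives of $\Cl_F^+(1)$, the cochain complex $C^\bullet(\frU/F_+^\times,\sO_\bbT)$ splits as a product over $\fra\in\frC$ of the corresponding $\Delta$-equivariant complexes on $\bbT^\fra$. Thus it suffices to check, for each $\fra\in\frC$ and each $(\alpha_0,\ldots,\alpha_g)\in\sA_\fra^{g+1}$, the identity
\[
    \sum_{j=0}^{g}(-1)^j\cG^\fra_{(\alpha_0,\ldots,\breve{\alpha}_j,\ldots,\alpha_g)}=0
\]
on $U^\fra_{(\alpha_0,\ldots,\alpha_g)}$. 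Expanding each term as a formal series $\cG^\fra_\sigma(t)=\sum_{\alpha\in\breve\sigma\cap\fra}t^\alpha$, this reduces, coefficient by coefficient, to the signed counting identity asserting that for every $\alpha\in\fra_+$ the sum of $(-1)^j\sgn(\alpha_0,\ldots,\breve{\alpha}_j,\ldots,\alpha_g)$ over those $j$ satisfying $\alpha\in\breve\sigma_{(\alpha_0,\ldots,\breve{\alpha}_j,\ldots,\alpha_g)}$ vanishes. This is a Stokes-type cancellation reflecting the fact that the signed half-open simplicial cones generated by any $g$ of the $g+1$ vectors $\alpha_0,\ldots,\alpha_g\in\bbR^I\cong\bbR^g$ form a signed tiling of $\bbR^I_+\cup\{0\}$ with total multiplicity zero at every point, and is established by a case analysis on the linear dependence pattern of the $\alpha_j$. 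This combinatorial identity is precisely the content of \cite{BHY19}*{Proposition 4.2} in the case $\fra=\cO_F$; since neither the argument nor the definition of $\breve\sigma$ depends on the ambient lattice, it applies to each $\fra\in\frC$ without modification, completing the verification.
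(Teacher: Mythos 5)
Your proposal is correct and follows the same route as the paper: after checking that the family lies in the equivariant \v{C}ech complex (invariance under $\frS_g$ and under $F_+^\times$), it reduces the cocycle identity, via the formal expansion $\cG^\fra_\sigma=\sum_{\alpha\in\breve\sigma\cap\fra}t^\alpha$, to the signed vanishing of characteristic functions of the cones $\breve\sigma_{(\alpha_0,\ldots,\breve\alpha_j,\ldots,\alpha_g)}$, which is exactly the identity from \cite{Yam10}*{Proposition 6.2} invoked by the paper (and by \cite{BHY19}*{Proposition 4.2} in the class-number-one case). One small attribution slip: the combinatorial identity itself is the content of \cite{Yam10}*{Proposition 6.2}, not of \cite{BHY19}*{Proposition 4.2} --- the latter is the analogue of the statement being proved, and uses the former --- but this does not affect the substance of the argument.
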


\begin{proof}
	The fact that the collection $(\cG^\fra_\bsalpha)$ is invariant 
	under the action of $F_+^\times$ follows from the construction.
	The cocycle condition is proved using the same argument as that of \cite{BHY19}*{Proposition 4.2},
	again using the formula of \cite{Yam10}*{Proposition 6.2}.  (See the proof of 
	Theorem \ref{thm: polylog} below for a similar argument).
\end{proof}

When $F=\bbQ$, the Shintani generating class $\cG$ is simply the class represented by the 
rational function
\[
	\cG(t)=\frac{t}{1-t} \in \Gamma(U,\sO_{\bbG_m})=H^0(U, \sO_{\bbG_m})
\]
on $U=\bbG_m\setminus\{1\}$.

%%%%%%%%%%%%%%%%%%%%%%%%%%%%%%%%%%%%%%%%%%%%%%%%%%
%
%
%
\section{Construction of the $p$-adic Polylogarithms}\label{section: polylogarithm}
%
%
%
%%%%%%%%%%%%%%%%%%%%%%%%%%%%%%%%%%%%%%%%%%%%%%%%%%

Let $p$ be a rational prime.  
In this section, we will define the $p$-adic polylogarithm as a class in equivariant cohomology
of a certain rigid analytic space. 
In what follows, we fix a finite extension $K$ of $\bbQ_p$ in $\bbC_p$ containing 
$F^\tau\coloneqq\tau(F)$ for all $\tau\in I$.  
For any ring $R$, we denote by $\wh R:=\varprojlim_m R/p^mR$ the $p$-adic completion of $R$.

%%%%%%%%%%%%%%%%%%%%%%%%%%%%%%%%%%%%%%%%%%%%%%%%%%
%
\subsection{The $p$-adic Polylogarithm Function}%
%
%%%%%%%%%%%%%%%%%%%%%%%%%%%%%%%%%%%%%%%%%%%%%%%%%%

Let $\fra\in\frI$. We denote by $\wh\bbT^\fra_K$ the rigid analytic space over $K$ 
associated to the formal completion $\wh\bbT^\fra_{\cO_K}$ of 
$\bbT^\fra_{\cO_K}\coloneqq\bbT^\fra\otimes\cO_K$
with respect to the special fiber.
If we let $A\coloneqq\cO_K[t^\alpha\mid \alpha\in \fra]$, then
since $\bbT^\fra_{\cO_K}=\Spec A$, we have $\wh\bbT^\fra_{\cO_K}=\Spf\wh A$ 
and $\wh\bbT^\fra_K=\Sp(\wh A\otimes K)$ (see for example \cite{Bos14}*{\S7.4 Proposition 3}). 
Similarly, for any $\alpha\in\sA_\fra$, we let $\wh U^\fra_{\alpha\cO_K}$ be the 
formal scheme over $\cO_K$  obtained as the
$p$-adic completion of the affine scheme $U^\fra_\alpha\coloneqq\bbT^\fra\setminus\{t^\alpha=1\}$,
and $\wh U^\fra_{\alpha K}$ the associated rigid analytic space over $K$. 
Then we define a rigid analytic subspace $\wh U^\fra_K$ of $\wh\bbT^\fra_K$ by  
\[
	\wh U^\fra_K\coloneqq \bigcup_{\alpha\in\sA_\fra}\wh U^\fra_{\alpha K}.
\]
This $\wh U^\fra_K$ coincides with $\wh\bbT^\fra_K$ minus 
the residue disc around the identity, and the set 
$\wh \frU^\fra_K\coloneqq\bigl\{\wh U^\fra_{\alpha K}\bigr\}_{\alpha\in\sA_\fra}$ 
gives an affinoid open covering of $\wh U^\fra_K$.

Any torsion point $\xi\in\bbT^\fra(\ol{\bbQ})$ canonically defines 
a $K(\xi)$-valued point $\xi'\colon\Sp K(\xi)\rightarrow\wh\bbT^\fra_K$ as rigid analytic spaces.
In what follows, for any rigid analytic subspace $U\subset\wh\bbT_K$, we say that $\xi$ lies in $U$ if $\xi'$ does.
Abusing notation, we often denote $\xi'$ by $\xi$.
Note that $\xi$ lies in $\wh{U}^\fra_{\alpha K}$ if and only if $\xi(\alpha)\not\equiv 1$ modulo 
the maximal ideal of $O_{K(\xi)}$. In particular, any torsion point $\xi\in\bbT^\fra(\ol{\bbQ})$ 
which is not of $p$-power order lies in $\wh U^\fra_K$. 

For any $\fra\in\frI$, we denote by $(\fra\otimes\bbZ_p)^\times$ 
the set of generators of the $\cO_F\otimes\bbZ_p$-module $\fra\otimes\bbZ_p$, 
i.e., the set of $x\in\fra\otimes\bbZ_p$ such that $(\cO_F\otimes\bbZ_p)x=\fra\otimes\bbZ_p$. 
Equivalently, we may write it as 
\[
    (\fra\otimes\bbZ_p)^\times=\varprojlim_m (\fra/p^m\fra)^\times. 
\]
In particular, for $\alpha\in\fra$, we have $\alpha\otimes 1\in(\fra\otimes\bbZ_p)^\times$ 
if and only if the $\frp$-adic valuation of $\alpha$ is equal to that of $\fra$ 
for every prime $\frp\divides(p)$. In the following, we write simply $\alpha$ for 
$\alpha\otimes 1\in\fra\otimes\bbZ_p$. 

%If we take $u\in F^\times_+$ such that $u\fra$ is an integral ideal of $\cO_F$ prime to $(p)$, 
%then $\alpha\in(\fra\otimes\bbZ_p)^\times$ if and only if $u\alpha\in(\cO_F\otimes\bbZ_p)^\times$, 
%the latter denoting the set of invertible elements of the ring $\cO_F\otimes\bbZ_p$. 

%For any fractional ideal $\fra\in\frI$, there exists $u\in F^\times_+$ such that
%$u\fra$ is an integral ideal of $\cO_F$ prime to $(p)$. 
%We denote by $(\fra\otimes\bbZ_p)^\times$ the set of elements $\alpha\in \fra\otimes\bbZ_p$ 
%such that  $u\alpha\in (\cO_F\otimes\bbZ_p)^\times$ with respect to the inclusion $u\fra\subset\cO_F$. 
%Then the set $(\fra\otimes\bbZ_p)^\times$ is independent of the choice of $u\in F^\times_+$.

We define the $p$-adic polylogarithm function first as a formal power series as follows.

\begin{definition}\label{def: polylogarithm}
	Let $\fra\in\frI$.
	For $\bsalpha\in\sA_\fra^g$, $\sigma=\sigma_\bsalpha$
	and $\bsk\in\bbZ^I$,
	we define the \textit{$p$-adic polylogarithm function} 
	$\Li^{\fra,\p}_{\bsk,\sigma}(t)$ by the formal power series
	\begin{equation}\label{eq: polylogarithm}
		\Li^{\fra,\p}_{\bsk,\sigma}(t)
		\coloneqq\sum_{\substack{\alpha\in\breve\sigma\cap\fra\\ 
		\alpha\in(\fra\otimes\bbZ_p)^\times}} 
		\alpha^{-\bsk} t^\alpha,
	\end{equation}
	where $\alpha^{-\bsk}\coloneqq\prod_{\tau\in I}(\alpha^\tau)^{-k_\tau}$. 
\end{definition}

In the following, we will denote $\Li^{\fra,\p}_{\bsk,\sigma}(t)$ by $\Li^\p_{\bsk,\sigma}(t)$ 
for simplicity. 

%We note that the multi-indeu $\bsk$ fits neatly with the philosophy of plectic structures of  \cite{NS16}.
%by Nekov\'{a}\v{r} and Scholl

\begin{remark}
	\begin{enumerate}
		\item If $F=\bbQ$ and $\fra=\bbZ$, then the only element of $\sA_\fra$ is $1$, 
		which spans the cone $\sigma=\bbR_{\geq0}$. 
		In this case, 
		for any integer $k\in\bbN$, the function
		\[
			\Li^\p_{k,\sigma}(t) =
			\sum_{\substack{\alpha\in\bbN_+\\ 
			\alpha\in\bbZ_p^\times}} \alpha^{-k} t^\alpha
			=\sum_{\substack{n=1\\(n,p)=1}}^\infty n^{-k}t^n
		\]
		is the $p$-adic polylogarithm function $\ell^\p(t)$ of \cite{Del89}*{(3.2.2)},
		which coincides with the function \eqref{eq: classical polylogarithm} of 
		\S\ref{section: introduction}.
		\item If $\dim\sigma<g$, then we have $\Li^\p_{\bsk,\sigma}(t)=0$ since $\breve\sigma=\emptyset$.
		\item For any $g$-dimensional simplicial cone $\sigma$, $\bsalpha\in\sA^g_\fra$ such that $\sigma=\sigma_\bsalpha$ is determined uniquely up to exchange of components. 
	\end{enumerate}
\end{remark}

The power series $\Li^\p_{\bsk,\sigma}(t)$ of Definition \ref{def: polylogarithm} 
belongs, \emph{a priori}, to the completion $K\llbracket t^\alpha\mid\alpha\in\fra_+\rrbracket$ 
of the semigroup ring $K[t^\alpha\mid\alpha\in\fra_+]$ 
with respect to the ideal generated by $t^\alpha$ for all $\alpha\in\fra_+$. 
In fact, it lives in a smaller ring given below. 

\begin{lemma}\label{lem: polylogarithm}
    For $\fra\in\frI$ and $\bsalpha=(\alpha_1,\ldots,\alpha_g)\in\sA_\fra^g$, 
    we set $B\coloneqq\cO_K[t^\alpha\mid\alpha\in\fra_+]$ and 
    \begin{equation}\label{eq: B alpha}
        B_\bsalpha\coloneqq
		B\biggl[\frac{1}{1-t^{\alpha_1}},\cdots,\frac{1}{1-t^{\alpha_g}}\biggr]. 
    \end{equation}
    Then, for $\sigma=\sigma_\bsalpha$, we have $\Li^\p_{\bsk,\sigma}(t)\in\wh{B}_\bsalpha\otimes K$. 
    More precisely, $\Li^\p_{\bsk,\sigma}(t)$ belongs to the image of the natural inclusion 
    \begin{equation}\label{eq: wh B alpha K}
        \wh{B}_\bsalpha\otimes K\hookrightarrow K\llbracket t^\alpha\mid\alpha\in\fra_+\rrbracket. 
    \end{equation}
\end{lemma}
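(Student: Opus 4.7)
The plan is to realize $\Li^\p_{\bsk,\sigma}(t)$ as the $p$-adic limit of an explicit Cauchy sequence in $\wh{B}_\bsalpha\otimes K$, obtained by replacing the weights $\alpha^{-\bsk}$ with locally constant approximations on the compact profinite set $(\fra\otimes\bbZ_p)^\times$. Uniqueness of the formal expansion in $K\llbracket t^\alpha\mid\alpha\in\fra_+\rrbracket$ will then identify the limit with the series $\Li^\p_{\bsk,\sigma}(t)$ under the natural inclusion \eqref{eq: wh B alpha K}. (We may assume $\bsalpha$ is linearly independent over $\bbR$, since otherwise $\breve\sigma=\emptyset$ and the series is zero.)

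The key building block is the \emph{coset-restricted Shintani sum}: for any integer $N\geq 1$ and $\tilde\alpha\in\fra$, I claim
\[
    S_{\tilde\alpha,N}(t) \coloneqq \sum_{\alpha\in(\tilde\alpha+p^N\fra)\cap\breve\sigma}t^\alpha \ \in\ \wh{B}_\bsalpha\otimes K.
\]
Applying the standard Shintani decomposition $\breve\sigma\cap\fra=\bigsqcup_{\gamma\in\breve P_\bsalpha\cap\fra}(\gamma+\bbZ_{\geq 0}\bsalpha)$ and solving termwise the congruence $\gamma+\sum_i m_i\alpha_i\equiv\tilde\alpha\pmod{p^N\fra}$ for $m_i\in\bbZ_{\geq 0}$ produces the closed-form expression
\[
    S_{\tilde\alpha,N}(t) = \frac{Q_{\tilde\alpha,N}(t)}{\prod_{i=1}^g(1-t^{p^N\alpha_i})}
\]
for some finite $\cO_K$-linear combination $Q_{\tilde\alpha,N}\in B$ of monomials. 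I then factor $1-t^{p^N\alpha_i}=(1-t^{\alpha_i})\,\Psi_{N,i}(t^{\alpha_i})$ with $\Psi_{N,i}(s)\coloneqq 1+s+\cdots+s^{p^N-1}$. Since $|1-t^{\alpha_i}|_p=1$ on the affinoid $\wh{U}^\fra_{\bsalpha K}$, and the freshman's dream $s^{p^N}-1\equiv(s-1)^{p^N}\pmod{p}$ gives $\Psi_{N,i}(t^{\alpha_i})\equiv(t^{\alpha_i}-1)^{p^N-1}\pmod{p}$, the reduction of $\Psi_{N,i}(t^{\alpha_i})$ mod $p$ is a unit in $\wh{B}_\bsalpha/p$, hence $\Psi_{N,i}(t^{\alpha_i})$ lifts to a unit in $\wh{B}_\bsalpha$ itself by $p$-adic completeness.

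Since each embedding $\tau\in I$ restricts on $(\fra\otimes\bbZ_p)^\times$ to a continuous map into $K^\times$ with values of fixed $p$-adic valuation (namely the valuation of $\fra$ at the $p$-adic prime of $K$ selected by $\tau$), the function $\alpha\mapsto\alpha^{-\bsk}$ is continuous and $p$-adically bounded on this compact set. By uniform continuity, for each $N$ I pick a locally constant approximation $\phi_N\colon(\fra\otimes\bbZ_p)^\times\to K$, depending only on $\alpha\bmod p^{N+C}\fra$ for a suitable constant $C=C(\bsk)$, satisfying $|\phi_N(\alpha)-\alpha^{-\bsk}|_p\leq p^{-N}$ uniformly. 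Grouping by residue class (noting that the condition $\alpha\in(\fra\otimes\bbZ_p)^\times$ depends only on $\alpha\bmod p\fra$) yields
\[
    \Phi_N(t) \coloneqq \sum_{\bar\alpha\in(\fra/p^{N+C}\fra)^\times}\phi_N(\bar\alpha)\,S_{\tilde\alpha,N+C}(t) \ \in\ \wh{B}_\bsalpha\otimes K,
\]
whose formal expansion is $\sum_{\alpha\in A}\phi_N(\alpha)\,t^\alpha$ with $A=\breve\sigma\cap\fra\cap(\fra\otimes\bbZ_p)^\times$. The difference $\Phi_M-\Phi_N$ for $M>N$ is built from the locally constant function $\phi_M-\phi_N$ with values in $p^N\cO_K$, and by the same construction lies in $p^N\wh{B}_\bsalpha\otimes K$; hence $(\Phi_N)$ is Cauchy and its limit $\Phi\in\wh{B}_\bsalpha\otimes K$ has formal expansion $\sum_{\alpha\in A}\alpha^{-\bsk}t^\alpha=\Li^\p_{\bsk,\sigma}(t)$, completing the proof. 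The main technical obstacle is the first step: carrying out the translated Shintani computation cleanly when $\bsalpha$ is not a $\bbZ$-basis of $\fra$, which requires tracking the finite-index sublattice $\bbZ\bsalpha\subset\fra$ and its interaction with the fundamental parallelepiped $\breve P_\bsalpha$ when parameterizing the kernel of $\bbZ^g\to\fra/p^N\fra$.
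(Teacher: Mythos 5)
Your proposal is correct, and at heart it follows the same strategy as the paper's proof --- realize $\Li^\p_{\bsk,\sigma}(t)$ as a $p$-adic limit of rational approximants produced by $p$-power scaling of the Shintani decomposition of $\breve\sigma\cap\fra$ --- but the execution differs in two places, and the paper's choices make the obstacle you flag disappear. First, you parametrize by cosets of $p^N\fra$ and therefore have to reconcile the sublattice $\bbZ\bsalpha\subset\fra$ with $\breve P_\bsalpha$ when solving the congruence (the ``technical obstacle'' at the end of your writeup); the paper instead decomposes by cosets of $p^m\bbZ\bsalpha$, writing $\breve\sigma\cap\fra=\coprod_{n_1,\ldots,n_g\in\bbN}\bigl(R_m+p^m(n_1\alpha_1+\cdots+n_g\alpha_g)\bigr)$ with $R_m=p^m\breve P_\bsalpha\cap\fra$, which yields the denominator $\prod_i(1-t^{p^m\alpha_i})$ at once with no lattice bookkeeping. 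Second, rather than layering a locally constant approximation $\phi_N$ on top of coset sums $S_{\tilde\alpha,N}$, the paper normalizes the weight once and for all by a fixed $u\in(\fra^{-1}\otimes\bbZ_p)^\times$: this makes every coefficient $(u\gamma)^{-\bsk}\in\cO_K^\times$, so the single approximant $f_m=\frac{1}{\prod_i(1-t^{p^m\alpha_i})}\sum_{\gamma\in R_m,\,\gamma\in(\fra\otimes\bbZ_p)^\times}(u\gamma)^{-\bsk}t^\gamma$ lies in $\wh{B}_\bsalpha$, and the Cauchy estimate is immediate from the elementary congruence $(u\alpha)^{-\bsk}\equiv(u\gamma)^{-\bsk}\pmod{p^m}$ whenever $\alpha\equiv\gamma\pmod{p^m\fra}$ (multiplication by $u$ carries $\fra\otimes\bbZ_p$ isomorphically onto $\cO_F\otimes\bbZ_p$, so $u(\alpha-\gamma)\in p^m\cO_F\otimes\bbZ_p$). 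Your version is more modular, cleanly separating the geometric coset sums from the analytic approximation of the weight function, and it does go through once the congruence-solving step is carried out; the paper's $u$-normalization together with the $\bbZ\bsalpha$-coset decomposition simply packages the same two ideas with fewer moving parts.
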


\begin{proof}
	It is sufficient to prove that, for a fixed $u\in(\fra^{-1}\otimes\bbZ_p)^\times$, 
	$u^{-\bsk}\Li^\p_{\bsk,\sigma}(t)$ belongs to the image of $\wh{B}_\bsalpha$. 
	We denote by $P_\bsalpha$ the parallelepiped generated by $\bsalpha$, and 
	let $R_m\coloneqq p^m \breve{P}_\bsalpha\cap\fra$ for any integer $m\geq 0$.
	Then we put 
	\[
		f_m(t)\coloneqq\frac{1}{\prod_{i=1}^g(1-t^{\alpha_ip^m})}
		\sum_{\substack{\alpha\in R_m\\ \alpha\in(\fra\otimes\bbZ_p)^\times}}
		(u\alpha)^{-\bsk}t^\alpha.
	\]
	In the sum, since $(u\alpha)^\tau\in\cO_K^\times$ for any $\tau\in I$, 
	we have $(u\alpha)^{-\bsk}\in\cO^\times_K$, hence $f_m(t) \in\wh B_\bsalpha$.
	Since 
	\[
		\breve\sigma\cap\fra = \coprod_{n_1,\ldots,n_g\in\bbN}
		 (R_m+\alpha_1p^mn_1+\ldots+\alpha_gp^mn_g)
	\]
	and
	\[
		\frac{1}{\prod_{i=1}^g(1-t^{\alpha_ip^m})}=\sum_{n_1,\ldots,n_g\in\bbN}
		t^{\alpha_1p^mn_1+\cdots+\alpha_gp^mn_g},
	\]
	we see from the definition of the formal power series $\Li^\p_{\bsk,\sigma}(t)$ that 
	\[
		u^{-\bsk}\Li^\p_{\bsk,\sigma}(t)
		=\sum_{\substack{\alpha\in\breve\sigma\cap\fra\\ \alpha\in(\fra\otimes\bbZ_p)^\times}} 
		(u\alpha)^{-\bsk} t^\alpha
		\equiv f_m(t)\pmod{p^m\cO_K\llbracket t^\alpha\mid\alpha\in\fra_+\rrbracket}.
	\]
	In particular, we have $f_m(t)\equiv f_n(t)\pmod{p^m\wh{B}_\bsalpha}$ for any integer $n\geq m$, 
	hence the sequence $\bigl(f_m(t)\bigr)_{m\in\bbN}$ is a Cauchy sequence in 
	$\wh{B}_\bsalpha$ for the $p$-adic topology. Thus the $p$-adic limit
	\[
		u^{-\bsk}\Li^\p_{\bsk,\sigma}(t)=\lim_{m\to\infty} f_m(t)
	\]
	gives an element in $\wh B_\bsalpha$, as desired. 
\end{proof}

In what follows, for any $\bsalpha=(\alpha_0,\ldots,\alpha_q)\in\sA^{q+1}_\fra$, let 
$\wh{U}^\fra_{\bsalpha K}\coloneqq\wh{U}^\fra_{\alpha_0K}\cap\cdots\cap \wh{U}^\fra_{\alpha_qK}$. 
Noting that $A=O_K[t^\alpha\mid\alpha\in\fra]$, if we let
\[
    A_\bsalpha\coloneqq A\biggl[\frac{1}{1-t^{\alpha_0}},\cdots,\frac{1}{1-t^{\alpha_q}}\biggr],
\]
then we have $\wh{U}^\fra_{\bsalpha K}=\Sp(\wh{A}_\bsalpha\otimes K)$.

\begin{proposition}\label{prop: continuation}
    For $\fra\in\frI$, $\bsalpha\in\sA_\fra^g$ and $\sigma=\sigma_\bsalpha$, 
    the $p$-adic polylogarithm function $\Li^\p_{\bsk,\sigma}(t)$ defines 
    a rigid analytic function on $\wh{U}^\fra_{\bsalpha K}$. 
\end{proposition}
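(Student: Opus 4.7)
The plan is to deduce the proposition essentially formally from Lemma \ref{lem: polylogarithm}, in which all the analytic content has already been packaged. That lemma shows that the formal series $\Li^\p_{\bsk,\sigma}(t)$ belongs to $\wh{B}_\bsalpha\otimes K$, where $B_\bsalpha$ is the ring defined in \eqref{eq: B alpha}, so what remains is simply to transfer this element to the ring $\wh{A}_\bsalpha\otimes K$ of rigid analytic functions on the affinoid $\wh{U}^\fra_{\bsalpha K}$.

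For this, I would observe that the inclusion of $\cO_K$-algebras $B\hookrightarrow A$ (the latter allowing monomials $t^\alpha$ for all $\alpha\in\fra$, not only for $\alpha\in\fra_+$) extends uniquely to a ring homomorphism $B_\bsalpha\to A_\bsalpha$ by the universal property of adjoining the inverses of $1-t^{\alpha_i}$. This homomorphism is continuous for the $p$-adic topologies on both sides, so passing to $p$-adic completions and tensoring with $K$ yields a canonical map
\[
    \wh{B}_\bsalpha\otimes K \longrightarrow \wh{A}_\bsalpha\otimes K.
\]

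By the description $\wh{U}^\fra_{\bsalpha K}=\Sp(\wh{A}_\bsalpha\otimes K)$ recalled just before the proposition, an element of $\wh{A}_\bsalpha\otimes K$ is by definition a rigid analytic function on this affinoid. Applying the map above to the class produced by Lemma \ref{lem: polylogarithm} therefore gives the desired rigid analytic function realizing $\Li^\p_{\bsk,\sigma}(t)$ on $\wh{U}^\fra_{\bsalpha K}$. I do not expect any substantive obstacle here: the real analytic work, namely the $p$-adic convergence extracted through the Cauchy sequence $f_m(t)$ in the proof of Lemma \ref{lem: polylogarithm}, has already been carried out, and the present proposition amounts to a straightforward change-of-rings observation.
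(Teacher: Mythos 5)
Your proof is correct and takes essentially the same route as the paper's: the paper's own proof is a one-line observation that the result follows from Lemma \ref{lem: polylogarithm} together with the natural inclusion $\wh{B}_\bsalpha\subset\wh{A}_\bsalpha$. You have simply unpacked how that map arises (localization followed by $p$-adic completion) and why an element of $\wh{A}_\bsalpha\otimes K$ is a rigid analytic function on $\wh{U}^\fra_{\bsalpha K}$, which is a faithful elaboration of the same argument.
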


\begin{proof}
    It follows from Lemma \ref{lem: polylogarithm}, 
    since we have a natural inclusion $\wh B_\bsalpha\subset\wh A_\bsalpha$.
\end{proof}

We will next consider the differential equation satisfied by the functions $\Li^\p_{\bsk,\sigma}(t)$.
For any $\tau\in I$ and $\fra\in\frI$, we let $\partial_\tau $ be the differential operator
\begin{equation}\label{eq: differential operator}
	\partial_\tau \coloneqq\sum_{j=1}^g  \gamma_j^\tau  \partial_{\log\gamma_j}
\end{equation}
on $\bbT^\fra$ for a choice of $\bbZ$-basis $\gamma_1,\ldots,\gamma_g$ of $\fra$, 
where $ \partial_{\log\gamma}\coloneqq t^\gamma\frac{\partial}{\partial t^\gamma}$ for any $\gamma\in\fra$.

\begin{lemma}\label{lem: differential}
	The differential operators $\partial_{\tau}$ act on any $t^\alpha$ for $\alpha\in\fra$ 
	by $\partial_{\tau}t^\alpha = \alpha^\tau t^\alpha$.
	In particular, $\partial_\tau$ is independent of the choice of the basis $\gamma_1,\ldots,\gamma_g$.
\end{lemma}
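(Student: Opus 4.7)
The plan is to reduce the claim to a direct computation using the defining relations $t^{\alpha+\beta}=t^\alpha t^\beta$ and $\bbZ$-linearity. First, I would fix $\alpha\in\fra$ and expand it in the chosen basis as $\alpha=\sum_{j=1}^g n_j\gamma_j$ with $n_j\in\bbZ$. Using the multiplicativity of the coordinates on $\bbT^\fra$, this yields $t^\alpha=\prod_{j=1}^g(t^{\gamma_j})^{n_j}$, which is a Laurent monomial in the $t^{\gamma_j}$'s.

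Next, I would compute the action of each $\partial_{\log\gamma_j}=t^{\gamma_j}\frac{\partial}{\partial t^{\gamma_j}}$ on this monomial. By the Leibniz rule, $\partial_{\log\gamma_j}t^\alpha=n_j t^\alpha$, since $\partial_{\log\gamma_j}$ is the Euler operator measuring the exponent of $t^{\gamma_j}$. Substituting into the definition \eqref{eq: differential operator},
\[
    \partial_\tau t^\alpha=\sum_{j=1}^g \gamma_j^\tau\,\partial_{\log\gamma_j}t^\alpha
    =\Biggl(\sum_{j=1}^g n_j\gamma_j^\tau\Biggr)t^\alpha.
\]
Since $\tau\colon F\hookrightarrow\ol\bbQ$ is $\bbZ$-linear, the coefficient equals $\tau\bigl(\sum_j n_j\gamma_j\bigr)=\tau(\alpha)=\alpha^\tau$, proving the formula $\partial_\tau t^\alpha=\alpha^\tau t^\alpha$.

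For the independence from the basis, the right-hand side $\alpha^\tau t^\alpha$ makes no reference to $\gamma_1,\ldots,\gamma_g$, so the operator $\partial_\tau$ is determined by its values on the generators $t^\alpha$ ($\alpha\in\fra$) of the coordinate ring and hence depends only on $\tau$. There is really no obstacle here: the argument is a one-line application of the Leibniz rule combined with $\bbZ$-linearity of field embeddings, and it is included mainly to justify the notation $\partial_\tau$.
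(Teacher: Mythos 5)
Your proof is correct and follows essentially the same route as the paper: expand $\alpha$ in the $\bbZ$-basis, use $t^\alpha=\prod_j(t^{\gamma_j})^{n_j}$, apply the Leibniz rule to get $\partial_{\log\gamma_j}t^\alpha=n_j t^\alpha$, and sum to obtain $\alpha^\tau t^\alpha$. The paper's proof is the same computation, just presented slightly more compactly.
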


\begin{proof}
	For a $\bbZ$-basis $\gamma_1,\ldots,\gamma_g$ of $\fra$, we have 
	$\partial_\tau t^{\gamma_j} = \gamma^\tau_j t^{\gamma_j}$.
	For any $\alpha\in\fra$, if we write 
	$\alpha = m_1\gamma_1+\cdots+m_g\gamma_g$ for $\bsm=(m_j)\in\bbZ^g$,
	then we have
	\[
		\partial_\tau t^\alpha = \partial_\tau \bigl( (t^{\gamma_1})^{m_1}\cdots (t^{\gamma_g})^{m_g}\bigr)
		= \sum_{j=1}^g m_j\gamma_j^\tau\bigl( (t^{\gamma_1})^{m_1}\cdots (t^{\gamma_g})^{m_g}\bigr)
		=\alpha^\tau t^\alpha
	\]
	as desired.
\end{proof}

In fact, $\partial_\tau$ corresponds to the differential operator $\frac{1}{2\pi i}\frac{\partial}{\partial z_\tau}$
through the uniformization $(F\otimes\bbC)/\fra^*\cong\bbT(\bbC)$, where $z=(z_\tau)\in F\otimes\bbC\cong\bbC^I$
and $\fra^*\coloneqq\{\beta\in F\mid \Tr(\beta\fra)\subset\bbZ\}$.
See \cite{BHY19}*{\S 4} for details.  
The functions $\Li^\p_{\bsk,\sigma}(t)$ for $\bsk\in\bbZ^I$ satisfy the following differential equations.

\begin{proposition}\label{prop: diff eq}
	Let $\fra\in{\frI}$ and $\bsalpha\in\sA_\fra^{g}$.
	For $\bsk=(k_\tau)\in\bbZ^I$,
	let $\Li^\p_{\bsk,\sigma}(t)$ be the $p$-adic polylogarithm function 
	for the cone $\sigma=\sigma_\bsalpha$ on the rigid analytic space
	$\wh U^\fra_{\bsalpha K}$.  Then we have
	\[
		\partial_{\tau}\Li^\p_{\bsk,\sigma}(t) = \Li^\p_{\bsk-1_\tau,\sigma}(t),
	\]
	where $1_\tau\in\bbZ^I$ is the element with $1$ in the $\tau$-component and 
	zero in the other components.
\end{proposition}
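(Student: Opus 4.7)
The plan is to establish the identity by a termwise differentiation argument, which follows almost directly from Lemma \ref{lem: differential} once convergence and compatibility with the $p$-adic completion are handled.

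First, working at the level of formal power series in $K\llbracket t^\alpha\mid\alpha\in\fra_+\rrbracket$, we apply $\partial_\tau$ term by term to the defining expansion
\[
\Li^\p_{\bsk,\sigma}(t)=\sum_{\substack{\alpha\in\breve\sigma\cap\fra\\ \alpha\in(\fra\otimes\bbZ_p)^\times}}\alpha^{-\bsk}t^\alpha.
\]
Using $\partial_\tau t^\alpha=\alpha^\tau t^\alpha$ from Lemma \ref{lem: differential}, together with the identity $\alpha^\tau\cdot\alpha^{-\bsk}=\alpha^{-(\bsk-1_\tau)}$, the result of this termwise differentiation is precisely $\Li^\p_{\bsk-1_\tau,\sigma}(t)$. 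Hence the desired identity holds as formal power series.

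Second, we lift this formal identity to one of rigid analytic functions on $\wh U^\fra_{\bsalpha K}=\Sp(\wh A_\bsalpha\otimes K)$. By Lemma \ref{lem: polylogarithm} and Proposition \ref{prop: continuation}, both sides lie in $\wh B_\bsalpha\otimes K\subset\wh A_\bsalpha\otimes K$, and the inclusion $\wh B_\bsalpha\otimes K\hookrightarrow K\llbracket t^\alpha\mid\alpha\in\fra_+\rrbracket$ from \eqref{eq: wh B alpha K} is injective. The derivation $\partial_\tau$ is $K$-linear and continuous with respect to the $p$-adic topology on $\wh A_\bsalpha$, being induced from a derivation on $A$ with $\cO_K$-coefficients; it acts on the generators $t^\alpha$ by multiplication by $\alpha^\tau$ on either side of the inclusion. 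Hence $\partial_\tau$ commutes with the formal-expansion inclusion, and the formal identity lifts to an equality in $\wh B_\bsalpha\otimes K$, giving the desired identity of rigid analytic functions on $\wh U^\fra_{\bsalpha K}$.

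The main obstacle is the second step, namely the verification that $\partial_\tau$ extends continuously to the $p$-adically completed rings and is compatible with the formal-expansion inclusion \eqref{eq: wh B alpha K}. This is, however, a routine technical check given the explicit formula \eqref{eq: differential operator} for $\partial_\tau$, the invariance of its action on monomials under $p$-adic completion, and the fact that it is already defined on the localized ring $A_\bsalpha$ before completion.
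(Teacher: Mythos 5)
Your proof is correct and follows essentially the same route as the paper: compute termwise in the formal power series ring $K\llbracket t^\alpha\mid\alpha\in\fra_+\rrbracket$ using $\partial_\tau t^\alpha=\alpha^\tau t^\alpha$ from Lemma~\ref{lem: differential}, then transport the identity to $\wh{B}_\bsalpha\otimes K\subset\wh{A}_\bsalpha\otimes K$ via compatibility of $\partial_\tau$ with the inclusions \eqref{eq: wh B alpha K}. The paper's proof is slightly more terse but relies on exactly the same two ingredients.
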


\begin{proof}
	By the formula $\partial_{\tau}(t^\alpha)=\alpha^\tau t^\alpha$ of Lemma \ref{lem: differential}, 
	we can define the operator $\partial_\tau$ on the ring 
	$K\llbracket t^\alpha\mid\alpha\in\fra_+\rrbracket$, 
	and then on $\wh{B}_\bsalpha\otimes K$ through the inclusion \eqref{eq: wh B alpha K}. 
    The statement regarded as an equality in $K\llbracket t^\alpha\mid\alpha\in\fra_+\rrbracket$ 
    follows from the definition \eqref{eq: polylogarithm} of $\Li^\p_{\bsk,\sigma}(t)$. 
    Since the inclusion $\wh{B}_\bsalpha\otimes K\hookrightarrow \wh{A}_\bsalpha\otimes K$
    is compatible with $\partial_\tau$, we obtain the result. 
\end{proof}

%%%%%%%%%%%%%%%%%%%%%%%%%%%%%%%%%%%%%%%%%%%%%%%%%%
%
\subsection{The $p$-adic Polylogarithm}\label{subsection: p-adic polylog}
%
%%%%%%%%%%%%%%%%%%%%%%%%%%%%%%%%%%%%%%%%%%%%%%%%%%

In this subsection,
we show that the $p$-adic polylogarithm functions for various fractional ideals and cones
combine together to give the $p$-adic polylogarithm.

We consider the rigid analytic spaces $\wh{\bbT}_K\coloneqq\coprod_{\fra\in\frI}\wh{\bbT}^\fra_K$ and 
$\wh{U}_K\coloneqq\coprod_{\fra\in\frI}\wh{U}^\fra_K$ on which $F^\times_+$ naturally acts. 
We first define the $F_+^\times$-equivariant sheaf 
$\sO_{\wh\bbT_K}(\bsk)$ on $\wh\bbT_K$ as follows.

\begin{definition}\label{def: twist}
	For any $\bsk=(k_{\tau})\in\bbZ^I$, we define an $F_+^\times$-equivariant sheaf 
	$\sO_{\wh{\bbT}_K}(\bsk)$ on $\wh\bbT_K$
	as follows.
	As an $\sO_{\wh\bbT_K}$-module we let
	$\sO_{\wh\bbT_K}(\bsk)\coloneqq\sO_{\wh\bbT_K}$, 
	and we define the $F_+^\times$-equivariant structure
	\[
		\iota_x\colon\bra{x}^*\sO_{\wh\bbT_K}(\bsk)\cong\sO_{\wh\bbT_K}(\bsk)
	\]
	to be the multiplication by $x^{-\bsk}\coloneqq
	\prod_{\tau\in I}(x^{\tau})^{-k_{\tau}}$ for any 
	$x\in F_+^\times$.
\end{definition}

The rigid analytic space $\wh{U}_K$ has an affinoid open covering 
$\wh{\frU}_K\coloneqq\{\wh{U}^\fra_{\bsalpha K}\}_{\fra\in\frI,\bsalpha\in\sA_\fra}$. 
Then, similarly to Proposition \ref{prop: standard}, the \v{C}ech complex 
$C^{\bullet}\bigl(\wh\frU_K/F_+^\times,\sO_{\wh\bbT_K}(\bsk)\bigr)$ is defined by
\[
		C^{q}\bigl(\wh\frU_K/F_+^\times,\sO_{\wh\bbT_K}(\bsk)\bigr)
		\coloneqq\biggl(\prod_{\fra\in{\frI}}\prod^\alt_{\bsalpha\in\sA_\fra^{q+1}}
		\Gamma\bigl(\wh U^\fra_\bsalpha,\sO_{\wh\bbT^\fra_K}(\bsk)\bigr)\biggr)^{F_+^\times}
\]
with the standard alternating derivative as in \eqref{eq: standard}. 
Then we have an isomorphism 
\begin{equation}\label{eq: Cech cohomology: rigid}
	H^m(\wh U_K/F_+^\times, \sO_{\wh\bbT_K}(\bsk))
	\cong H^m\bigl(C^{\bullet}\bigl(\wh\frU_K/F_+^\times,\sO_{\wh\bbT_K}(\bsk)\bigr)\bigr), 
\end{equation}
where the left hand side is defiend as the right derived functor of 
$\Gamma(\widehat{U}_K/F^\times_+,-)=\Gamma(\widehat{U}_K,-)^{F^\times_+}$ 
on the category of $F^\times_+$-equivariant $\sO_{\widehat{U}_K}$-modules 
(the fact that this category has enough injectives can be proved in the same way as \cite{Gro57}*{Proposition 5.1.1}). 
The isomorphism \eqref{eq: Cech cohomology: rigid} follows along the same line as Proposition \ref{prop: standard}, 
with the affine vanishing replaced by the affinoid vanishing. 
We also need, instead of \cite{BHY19}*{Lemma 3.5}, the following: 

\begin{lemma}\label{lem: cohomology and product: rigid}
Let $X$ be a rigid analytic space and let $(\sF_\lambda)_{\lambda\in\Lambda}$ be a family of coherent sheaves on $X$. 
Then for any $m\ge 0$, we have 
\[H^m\biggl(X,\prod_{\lambda\in\Lambda}\sF_\lambda\biggr)\cong \prod_{\lambda\in\Lambda}H^m(X,\sF_\lambda), \]
where $\prod_{\lambda\in\Lambda}\sF_\lambda$ is the product in the category of $\sO_X$-modules. 
\end{lemma}
\begin{proof}
We take an injective resolution $0\to \sF_\lambda\to I_\lambda^\bullet$ for each $\lambda$, and 
will show that $0\to\prod_\lambda\sF_\lambda\to \prod_\lambda I_\lambda^\bullet$ gives an injective resolution. 
Since each component $\prod_\lambda I_\lambda^q$ is injective, it suffices to show the exactness. 
Recall that a complex of $\sO_X$-modules is exact if and only if the complexes of stalks at 
all prime filters are exact \cite{vdPS95}*{p.~94}. 

For any prime filter $x$ on $X$, the collection of admissible affinoid open subsets belonging to $x$ is 
cofinal in the directed set $x$. On the other hand, for any admissible affinoid open subset $V$ of $X$, 
the affinoid vanishing implies that 
$0\to \sF_\lambda(V)\to I_\lambda^\bullet(V)$ is exact for each $\lambda$, and so is the product 
$0\to \prod_\lambda\sF_\lambda(V)\to \prod_\lambda I_\lambda^\bullet(V)$. 
Thus, by passing to the direct limit, we see that 
$0\to (\prod_\lambda\sF_\lambda)_x\to (\prod_\lambda I_\lambda^\bullet)_x$ is also exact. 
This completes the proof. 
\end{proof}

\begin{theorem}\label{thm: polylog}
		Let $\bsk\in\bbZ^I$.
		For any $\fra\in\frI$ and $\bsalpha\in\sA_\fra^g$,
		we let
		\[
			\Li^\p_{\bsk,\bsalpha}(t)\coloneqq
			\sgn(\bsalpha)\Li^\p_{\bsk,\sigma_\bsalpha}(t)
			\in\Gamma(\wh U^\fra_\bsalpha,\sO_{\wh\bbT^\fra_K}(\bsk)),
		\]		
		where $\Li^\p_{\bsk,\sigma_\bsalpha}(t)$ is the 
		$p$-adic polylogarithm function for the cone $\sigma_\bsalpha$, 
		regarded as an element of $\Gamma(\wh U^\fra_\bsalpha,\sO_{\wh\bbT^\fra_K}(\bsk))$ 
		via Proposition \ref{prop: continuation}.
		Then $\big(\Li^\p_{\bsk,\bsalpha}(t)\big)$ gives a cocycle in
		$C^{q}\bigl(\wh\frU_K/F_+^\times,\sO_{\wh\bbT_K}(\bsk)\bigr)$,
		hence defines a class 
		\[
			\Li^\p_\bsk(t)\in H^{g-1}\bigl(\wh U_K/F_+^\times,\sO_{\wh\bbT_K}(\bsk)\bigr),
		\]
		which we call the $p$-adic polylogarithm.
\end{theorem}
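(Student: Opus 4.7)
The plan is to verify two things: (a) that the collection $\big(\Li^\p_{\bsk,\bsalpha}(t)\big)$ is invariant under the twisted action of $F_+^\times$ on the \v Cech complex, and (b) that it satisfies the \v Cech cocycle condition. Both will reduce, via Lemma \ref{lem: polylogarithm}, to identities in the formal power series ring $K\llbracket t^\alpha\mid\alpha\in\fra_+\rrbracket$, into which each $\Li^\p_{\bsk,\sigma_\bsalpha}(t)$ injects.

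For (a), I would fix $x\in F_+^\times$, $\fra\in\frI$, and $\bsalpha\in\sA_\fra^g$. The isomorphism $\bra{x}\colon\bbT^{x\fra}\isomto\bbT^{\fra}$ corresponds on coordinate rings to $t^\alpha\mapsto t^{x\alpha}$ for $\alpha\in\fra$. Substituting into \eqref{eq: polylogarithm} and reindexing the sum by $\beta\coloneqq x\alpha$ --- which bijects $\breve\sigma_\bsalpha\cap\fra$ onto $\breve\sigma_{x\bsalpha}\cap x\fra$, and also $(\fra\otimes\bbZ_p)^\times$ onto $(x\fra\otimes\bbZ_p)^\times$ because every $\frp$-adic valuation shifts uniformly --- I compute
\[
\bra{x}^*\Li^\p_{\bsk,\sigma_\bsalpha}(t) = x^{\bsk}\,\Li^\p_{\bsk,\sigma_{x\bsalpha}}(t)
\]
using $(x^{-1}\beta)^{-\bsk}=x^{\bsk}\beta^{-\bsk}$. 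Since $\iota_x$ is multiplication by $x^{-\bsk}$ (Definition \ref{def: twist}), this factor is cancelled, and $\sgn(x\bsalpha)=\sgn(\bsalpha)$ because $x$ is totally positive. Hence the action of $x$ sends $\Li^\p_{\bsk,\bsalpha}(t)$ to $\Li^\p_{\bsk,x\bsalpha}(t)$, as required.

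For (b), given $\bsalpha=(\alpha_0,\ldots,\alpha_g)\in\sA_\fra^{g+1}$, write $\bsalpha^{(j)}\coloneqq(\alpha_0,\ldots,\breve\alpha_j,\ldots,\alpha_g)$. I would aim to show
\[
\sum_{j=0}^{g}(-1)^j\sgn\bigl(\bsalpha^{(j)}\bigr)\,\Li^\p_{\bsk,\sigma_{\bsalpha^{(j)}}}(t)\Big|_{\wh U^\fra_{\bsalpha K}}=0
\]
in $\wh A_\bsalpha\otimes K$. By Lemma \ref{lem: polylogarithm}, each summand lies in $\wh B_{\bsalpha^{(j)}}\otimes K$ and injects into $K\llbracket t^\alpha\mid\alpha\in\fra_+\rrbracket$ compatibly with the restriction map $\wh B_{\bsalpha^{(j)}}\otimes K\hookrightarrow \wh A_\bsalpha\otimes K$, so it suffices to verify the identity coefficient by coefficient in the formal power series ring. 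There, the coefficient of $t^\alpha$ (for $\alpha\in\fra_+$) factors as $\alpha^{-\bsk}\mathbf{1}_{(\fra\otimes\bbZ_p)^\times}(\alpha)$ --- which depends only on $\alpha$ --- multiplied by the purely combinatorial quantity
\[
\sum_{j=0}^{g}(-1)^j\sgn\bigl(\bsalpha^{(j)}\bigr)\,\mathbf{1}_{\breve\sigma_{\bsalpha^{(j)}}}(\alpha),
\]
which vanishes for every $\alpha\in\fra_+$ by the signed simplicial decomposition of the real cone spanned by $\alpha_0,\ldots,\alpha_g$. This is precisely the identity used in the proof of the cocycle property for the Shintani generating class in \cite{BHY19}*{Proposition 4.2}, which rests on \cite{Yam10}*{Proposition 6.2}.

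The main substantive step is thus the combinatorial cone-decomposition identity; once it is granted, the analytic bookkeeping is essentially mechanical. I expect the writeup to invoke the Shintani case nearly verbatim, observing only that the weight factor $\alpha^{-\bsk}$ and the restriction to primitive $\alpha\in(\fra\otimes\bbZ_p)^\times$ can be pulled outside the signed sum because they depend on $\alpha$ alone, not on which cone contains it.
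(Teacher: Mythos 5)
Your proof is correct and follows essentially the same route as the paper: the equivariance computation $\bra{x}^*\Li^\p_{\bsk,\sigma_\bsalpha}(t)=x^{\bsk}\Li^\p_{\bsk,\sigma_{x\bsalpha}}(t)$ combined with $\iota_x$ being multiplication by $x^{-\bsk}$, and the cocycle condition reduced to the signed characteristic-function identity of \cite{Yam10}*{Proposition~6.2} via the injection into $K\llbracket t^\alpha\mid\alpha\in\fra_+\rrbracket$. The only difference is that you spell out the coefficient-by-coefficient factorization (pulling $\alpha^{-\bsk}\mathbf{1}_{(\fra\otimes\bbZ_p)^\times}(\alpha)$ outside the signed sum), which the paper leaves implicit by citing the Shintani-class argument.
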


\begin{proof}
	By construction, 
	$\bigl(\sgn(\bsalpha)\Li^\p_{\bsk,\sigma_\bsalpha}(t)\bigr)$ defines an element in 
	$\prod_{\fra\in\frI}
	\prod^\alt_{\bsalpha\in\sA_\fra^{g}}
	\Gamma\bigl(\wh U^\fra_{\bsalpha K},\sO_{\wh\bbT^\fra_K}(\bsk)\bigr)$.
	For any $x\in F_+^\times$, the function $\Li^\p_{\bsk,\sigma}(t)$ satisfies
	\[
		\bra{x}^*\Li^\p_{\bsk,\sigma}(t)
		=\sum_{\substack{\alpha\in\breve\sigma\cap\fra\\
		\alpha\in(\fra\otimes\bbZ_p)^\times}}\frac{1}{\alpha^\bsk}t^{x\alpha}
		=x^{\bsk}\sum_{\substack{\alpha\in x(\breve\sigma\cap\fra)\\
		\alpha\in(x\fra\otimes\bbZ_p)^\times}}\frac{1}{\alpha^\bsk}t^{\alpha}
		=x^{\bsk}\Li^\p_{\bsk,x\sigma}(t) 
		\in \Gamma(\wh U^{x\fra}_{x\bsalpha},\sO_{\wh\bbT^{x\fra}_K}(\bsk)),
	\]
	hence we see through the isomorphism 
	$\iota_x\colon\langle x\rangle^*\sO_{\wh\bbT_K}(\bsk)\cong\sO_{\wh\bbT_K}(\bsk)$ 
	as in Definition \ref{def: twist} that the element 
	\[
		\Li_\bsk^\p(t)=\big(\sgn(\bsalpha)\Li^\p_{\bsk,\sigma_\bsalpha}(t)\big)
		\in \prod_{\fra\in\frI}
	\prod^\alt_{\bsalpha\in\sA_\fra^{g}}\Gamma\bigl(\wh U^\fra_\bsalpha,\sO_{\wh\bbT^\fra_K}(\bsk)\bigr)
	\] 
	is invariant with respect to the action of $F_+^\times$, that is
	$\Li^\p_{\bsk}(t)\in C^{g-1}\bigl(\wh\frU_K/F_+^\times,\sO_{\wh\bbT_K}(\bsk)\bigr)$.
	The fact that  $\Li^\p_{\bsk}(t)$ is a cocycle 
	follows from the linear relation of characteristic functions
	\[
		\sum_{j=0}^g (-1)^j 
		\sgn(\alpha_0,\ldots,\breve\alpha_j,\ldots,\alpha_g) 
		1_{\breve\sigma_{\alpha_0,\ldots,\breve\alpha_j,\ldots,\alpha_g}}=0
	\]
	for any $(\alpha_0,\ldots,\alpha_g)\in\sA_\fra^{g+1}$ given in \cite{Yam10}*{Proposition 6.2}.
	 This proves that 
	$
		\Li^\p_{\bsk}(t)
	$
	defines a class in $H^{g-1}(\wh U_K/F_+^\times,\sO_{\wh\bbT}(\bsk))$ as desired.
\end{proof}

The differential operator $\partial_\tau$ on $\wh\bbT^\fra_K$ for $\fra\in\frI$ gives, 
for each $\bsk\in\bbZ^I$, a morphism of abelian sheaves 
\begin{equation*}%\label{eq: differential}
	\partial_\tau\colon\sO_{\wh\bbT_K}(\bsk)\rightarrow\sO_{\wh\bbT_K}(\bsk-1_\tau)
\end{equation*}
compatible with the action of $F_+^\times$ 
(recall that $1_\tau\in\bbZ^I$ is the element with $1$ in the $\tau$-th component 
and $0$ in the other components).
This induces a homomorphism
\[
	\partial_\tau\colon H^m\bigl(\wh U_K/F_+^\times,\sO_{\wh\bbT_K}(\bsk)\bigr)\rightarrow 
	H^m\bigl(\wh U_K/F_+^\times,\sO_{\wh\bbT_K}(\bsk-1_\tau)\bigr)
\]
on equivariant cohomology.  
The following formula is an immediate consequence of Proposition \ref{prop: diff eq}.

\begin{proposition}%\label{prop: differential equation}
	For any $\bsk\in\bbZ^I$, the $p$-adic polylogarithm class satisfies the differential equation
	\[
		\partial_\tau\Li^\p_{\bsk}(t) = \Li^\p_{\bsk-1_\tau}(t).
	\]
\end{proposition}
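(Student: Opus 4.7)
The plan is to apply $\partial_\tau$ termwise to the \v{C}ech cocycle representative of $\Li^\p_\bsk(t)$ produced in Theorem~\ref{thm: polylog} and invoke Proposition~\ref{prop: diff eq} on each component. The only non-formal point is to confirm that $\partial_\tau$ indeed defines a morphism of $F_+^\times$-equivariant sheaves $\partial_\tau\colon\sO_{\wh\bbT_K}(\bsk)\to\sO_{\wh\bbT_K}(\bsk-1_\tau)$, with the prescribed shift in twist, so that it descends to a well-defined map on equivariant cohomology.

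First I would verify this equivariance. By Lemma~\ref{lem: differential} we have $\partial_\tau t^\alpha=\alpha^\tau t^\alpha$, while $\bra{x}\colon\bbT^{x\fra}\isomto\bbT^\fra$ pulls back $t^\alpha$ to $t^{x\alpha}$. A one-line computation then shows that, for any local section $f$,
$\partial_\tau\bigl(\bra{x}^*f\bigr)=x^\tau\,\bra{x}^*\bigl(\partial_\tau f\bigr)$.
The equivariant structure $\iota_x$ on $\sO_{\wh\bbT_K}(\bsk)$ is multiplication by $x^{-\bsk}$, whereas on $\sO_{\wh\bbT_K}(\bsk-1_\tau)$ it is multiplication by $x^{-\bsk+1_\tau}=x^{\tau}\cdot x^{-\bsk}$. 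The factor $x^\tau$ produced by $\partial_\tau$ is therefore precisely cancelled by the change of twist, so the square expressing compatibility of $\partial_\tau$ with $\iota_x$ commutes and $\partial_\tau$ is indeed a morphism of equivariant sheaves with the stated shift.

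Once this is in place, $\partial_\tau$ acts coordinatewise on the \v{C}ech complex $C^\bullet\bigl(\wh\frU_K/F_+^\times,\sO_{\wh\bbT_K}(-)\bigr)$ and commutes with the alternating differential \eqref{eq: standard}, inducing the displayed homomorphism on cohomology. Applying it to the cocycle representative $\bigl(\sgn(\bsalpha)\Li^\p_{\bsk,\sigma_\bsalpha}(t)\bigr)_{\fra,\bsalpha}$ of $\Li^\p_\bsk(t)$ and using Proposition~\ref{prop: diff eq} on each term produces the cocycle $\bigl(\sgn(\bsalpha)\Li^\p_{\bsk-1_\tau,\sigma_\bsalpha}(t)\bigr)_{\fra,\bsalpha}$, which is precisely the defining cocycle of $\Li^\p_{\bsk-1_\tau}(t)$. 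No step presents a real obstacle: the main content is the bookkeeping in the equivariance check above, and the rest of the argument is entirely formal.
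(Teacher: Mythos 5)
Your argument is correct and follows exactly the route the paper takes: the paper asserts (just before the proposition) that $\partial_\tau$ gives a morphism $\sO_{\wh\bbT_K}(\bsk)\to\sO_{\wh\bbT_K}(\bsk-1_\tau)$ of $F_+^\times$-equivariant sheaves and hence a map on equivariant cohomology, and then deduces the proposition termwise from Proposition~\ref{prop: diff eq}. You have simply written out the one-line equivariance check that the paper leaves implicit; the rest is the same.
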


%%%%%%%%%%%%%%%%%%%%%%%%%%%%%%%%%%%%%%%%%%%%%%%%%%
%
%
%
\section{$p$-adic Interpolation of Lerch Zeta Values}\label{section: interpolation}
%
%
%
%%%%%%%%%%%%%%%%%%%%%%%%%%%%%%%%%%%%%%%%%%%%%%%%%%

The purpose of this section is to construct a $p$-adic measure interpolating the special 
values of Lerch zeta functions.  We will then use this measure to construct the $p$-adic
$L$-functions of Hecke characters of the totally real field $F$.

%%%%%%%%%%%%%%%%%%%%%%%%%%%%%%%%%%%%%%%%%%%%%%%%%%
%
\subsection{The $p$-adic interpolations of Lerch zeta and Hecke $L$-values}
%\label{subsection: p-adic interpolation}
%
%%%%%%%%%%%%%%%%%%%%%%%%%%%%%%%%%%%%%%%%%%%%%%%%%%

In this subsection, we will state the main results of this section, 
concerning certain $p$-adic measures which interpolate 
the values of Lerch zeta functions and Hecke $L$-functions at nonpositive integers.
The actual constructions of such measures will be given in \S\ref{subsection: Lerch zeta} 
and \S\ref{subsection: padicHecke}. 

First, we consider the quotient topological space $\ol{\Delta}\backslash(\fra\otimes\bbZ_p)$ 
of $\fra\otimes\bbZ_p$ by the action of the closure $\ol{\Delta}$ of $\Delta$ 
in $\cO_F\otimes\bbZ_p$. This space may be written as 
\[
    \ol{\Delta}\backslash(\fra\otimes\bbZ_p)
    =\varprojlim_m \Delta\backslash(\fra/p^m\fra). 
\]
We define the \emph{norm} function 
$\wh{N}_\fra\colon\ol{\Delta}\backslash(\fra\otimes\bbZ_p)\to\bbZ_p$
as the limit of the functions 
\[
    \Delta\backslash(\fra/p^m\fra)\longrightarrow \bbZ_p/p^m\bbZ_p;\ 
    \alpha\longmapsto N(\fra^{-1}\alpha)\bmod p^m, 
\]
where the representative $\alpha$ is chosen from $\fra_+$. 
In other words, $\wh{N}_\fra$ is a continuous function determined by 
$\wh{N}_\fra(\alpha\otimes 1)=N(\fra^{-1}\alpha)$ for $\alpha\in\fra_+$. 
Then we have an identity 
\begin{equation}\label{eq: N_a}
    \wh{N}_\fra(x)=N\fra^{-1} N(x)  
\end{equation}
for any $x\in\fra\otimes\bbZ_p$, where we let
\begin{equation}\label{eq: N(x)}
	N(x)\coloneqq\prod_{\tau\in I}x^\tau.
\end{equation}
Here, $x^\tau$ for $\tau\in I$ denotes the image of $x$
with respect to the map $\tau\colon\fra\otimes\bbZ_p\rightarrow K$ 
induced by $\alpha\otimes c\mapsto\alpha^\tau c$.
Note that such map is defined since we have taken $K$ sufficiently large
so that $\tau(F)\subset K$ for any $\tau\in I$.

We say that a function $\phi$ on a fractional ideal $\fra$ is \textit{periodic} 
if there exists a nonzero integral ideal $\frg$ of $F$ such that 
$\phi$ factors through $\fra/\frg\fra$.
If a periodic function $\phi\colon\fra\rightarrow\bbC$ is invariant under the action of $\Delta$, 
we define 
\[
	\cL(\phi,s)\coloneqq
	\sum_{\alpha\in\Delta\backslash\fra_+} \phi(\alpha) N(\fra^{-1}\alpha)^{-s}. 
\]
For $\phi=\xiDelta$, this recovers the definition of the Lerch zeta function $\cL(\xiDelta,s)$.

\begin{theorem}[$p$-adic interpolation of Lerch zeta values]\label{thm: interpolation}
    Let $\fra\in\frI$ and $\xi$ be a torsion point of $\bbT^\fra$ 
    whose order is not a $p$-power. Then there exists a unique measure 
    $\mu_{\xiDelta}$ on $\ol{\Delta}\backslash(\fra\otimes\bbZ_p)$ 
    satisfying the interpolation property 
    \[
		\int_{\ol\Delta\backslash(\fra\otimes\bbZ_p)}
		\xipDelta(x)\wh{N}_\fra(x)^k d\mu_{\xiDelta}(x)
		=\cL(\xiDelta\,\xipDelta,-k)
	\]
	for any $\xi_p\in\bbT^\fra[p^\infty]\coloneqq\bigcup_{m\geq 0}\bbT^\fra[p^m]$ 
	and any $k\in\bbN$. 
\end{theorem}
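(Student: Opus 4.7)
The plan is to adapt the Shintani--Cassou-Nogu\`es generating-function method: I will construct $\mu_{\xiDelta}$ by applying Amice-type duality to the $p$-stabilization of the Shintani generating class cone-by-cone, sum over cones in a chosen Shintani decomposition, $\Delta$-average, and verify the interpolation property via the Shintani formula for Lerch zeta values at nonpositive integers. Uniqueness will then follow from density of the characters $\xipDelta\,\wh N_\fra^k$ in $\cC(\ol\Delta\backslash(\fra\otimes\bbZ_p),K)$.

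First, I would fix a $\Delta$-equivariant Shintani decomposition of $F_+$ into $g$-dimensional simplicial cones $\{\sigma_j=\sigma_{\bsalpha_j}\}$ with $\bsalpha_j\in\sA_\fra^g$, so that $\fra_+=\coprod_j(\breve{\sigma}_j\cap\fra)$. Because the order of $\xi$ is not a $p$-power, $\xi$ lies in $\wh U^\fra_K$; refining the decomposition if needed, one can arrange $\xi\in\wh U^\fra_{\bsalpha_j K}$ for every cone relevant to the $\Delta$-orbit of $\xi$. Consider the $p$-depleted generating function
\[
    \cG^{\fra,\p}_{\sigma_j}(t)\coloneqq\sum_{\substack{\alpha\in\breve\sigma_j\cap\fra\\\alpha\in(\fra\otimes\bbZ_p)^\times}}t^\alpha,
\]
which is analytic on the residue disc of $\wh\bbT^\fra_K$ at $\xi$ by the convergence argument of Lemma \ref{lem: polylogarithm} applied with $\bsk=0$. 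Via the Amice--Mahler isomorphism between bounded measures on $\fra\otimes\bbZ_p$ and bounded rigid-analytic functions on this residue disc, $\sgn(\bsalpha_j)\,\cG^{\fra,\p}_{\sigma_j}(\xi\cdot s)$ corresponds to a measure $\mu_{j,\xi}$ on $\fra\otimes\bbZ_p$ supported on $\breve\sigma_j\cap(\fra\otimes\bbZ_p)^\times$.

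Summing the $\mu_{j,\xi}$ and then $\Delta$-averaging yields a measure on $\fra\otimes\bbZ_p$ whose $\Delta$-invariance follows from the $F_+^\times$-cocycle property of the Shintani generating class established in Section \ref{subsection: equivariant}; this descends to the measure $\mu_{\xiDelta}$ on $\ol\Delta\backslash(\fra\otimes\bbZ_p)$, and a standard subdivision argument shows independence from the chosen decomposition. For the interpolation property, I would compute the moment $\int\xipDelta(x)\wh N_\fra(x)^k d\mu_{\xiDelta}(x)$ cone by cone: the Amice dictionary combined with \eqref{eq: N_a} identifies this moment with $N\fra^{-k}$ times the sum of $\sgn(\bsalpha_j)\prod_\tau\partial_\tau^k\cG^{\fra,\p}_{\sigma_j}$ evaluated at $\xi\xi_p$, which by Lemma \ref{lem: differential} collapses to $N\fra^{-k}\sum N(\alpha)^k(\xi\xi_p)(\alpha)$ summed over $\alpha\in\breve\sigma_j\cap\fra\cap(\fra\otimes\bbZ_p)^\times$. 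Summation over cones and $\Delta$-cosets, combined with the Shintani expression for $\cL(\xiDelta\,\xipDelta,-k)$ (the multi-variable generalization of the Bernoulli-number formula, as used in \cite{BHY19}), yields the required identity, while uniqueness is the standard Mahler/Amice density.

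The main obstacle is the careful reconciliation of the Amice transform with the Shintani generating function at the torsion point $\xi$: one must verify that the assembled measure is genuinely supported on $(\fra\otimes\bbZ_p)^\times$, that the signs $\sgn(\bsalpha_j)$ conspire with the Shintani formula at nonpositive integers, and that the resulting measure is independent of the chosen cone decomposition. The geometric incarnation of this last point is precisely the well-definedness of the cohomology class $\Li^\p_{\bsk}$ of Section \ref{section: polylogarithm}, so much of the technical groundwork is already in place.
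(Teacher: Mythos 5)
Your proposal contains a genuine gap: you have built the measure from the $p$-depleted generating function $\cG^{\fra,\p}_{\sigma_j}(t)=\sum_{\alpha\in\breve\sigma_j\cap\fra,\,\alpha\in(\fra\otimes\bbZ_p)^\times}t^\alpha$, so the resulting measure is supported on $(\fra\otimes\bbZ_p)^\times$ and its moments will interpolate the \emph{$p$-modified} Lerch zeta function $\cL^\p(\xiDelta\,\xipDelta,-k)$. But the theorem asks for a measure on all of $\ol\Delta\backslash(\fra\otimes\bbZ_p)$ whose moments give the \emph{full} Lerch value $\cL(\xiDelta\,\xipDelta,-k)$, with no Euler factors removed. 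What you would actually prove is the corollary that comes immediately after the theorem (restriction of $\mu_{\xiDelta}$ to the units), not the theorem itself. Indeed, in the paper the $p$-depleted series $\cG^{\fra,\p}_\sigma$ only appears later (as $\Li^\p_{0^I,\sigma}$ in Proposition~\ref{prop: crucial}), and there it is \emph{derived} from the measure by restricting to $(\fra\otimes\bbZ_p)^\times$, not used to construct it.

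To close the gap, start instead from the \emph{undepleted} Shintani generating function $\cG^\fra_\sigma(t)$ of Definition~\ref{def: G}. Since $\xi$ is not of $p$-power order, after refining the decomposition as in Lemma~\ref{lem: SD} the point $\xi$ lies in $\wh U^\fra_{\bsalpha K}$ for every relevant $\bsalpha$, and the Taylor expansion $\wt\cG^\fra_{\sigma,\xi}(T)$ of the rational function $\cG^\fra_\sigma$ at $\xi$ has $\cO_{K(\xi)}$-integral coefficients. Mahler's theorem then gives a measure $\mu_{\sigma,\xi}$ on \emph{all} of $\fra\otimes\bbZ_p$ whose moments are the Shintani zeta values $\zeta_\sigma(\xi\xi_p,-\bsk)$ (Proposition~\ref{prop: interpolation}); summing over $\sigma\in\Delta_\xi\backslash\Phi_\xi$ and pushing forward to $\ol\Delta\backslash(\fra\otimes\bbZ_p)$ gives $\mu_{\xiDelta}$ with the correct (undepleted) interpolation. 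Two smaller points: the sign factors $\sgn(\bsalpha_j)$ you insert are out of place here — since the cones form a genuine partition $\bbR_+^I=\coprod_\sigma\breve\sigma$, the measures are simply added without signs (the signs belong only to the alternating \v Cech cocycle representing the polylogarithm class); and the Amice transform should be applied to $\wt\cG^\fra_{\sigma,\xi}(T)$ on the residue disc at $\xi$, not to the rigid function on the larger affinoid $\wh U^\fra_{\bsalpha K}$. Independence of the decomposition is then automatic from the uniqueness via density of the test characters, so you do not need a separate subdivision argument.
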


Next, let us consider the projective limit 
\[
    \Cl^+_F(p^\infty)\coloneqq \varprojlim_m \Cl^+_F(p^m) 
\]
of the narrow ray class groups of $p$-power moduli, 
with respect to the natural homomorphisms $\Cl^+_F(p^{m+1})\to\Cl^+_F(p^m)$. 
By taking the limit, Lemma \ref{lem: ray class group} gives a map 
\[
    (\fra\otimes\bbZ_p)^\times\longrightarrow \Cl^+_F(p^\infty)
\]
for each $\fra\in\frI$, which we often denote by $\alpha\longmapsto\fra^{-1}\alpha$ 
by abuse of notation (precisely speaking, it makes sense only for $\alpha\in\fra_+$). 
Moreover, if we choose a set of representatives $\frC\subset\frI$ for $\Cl^+_F(1)$, 
we have a bijection 
\begin{equation}\label{eq: Cl(p^infty)}
    \coprod_{\fra\in\frC}\ol{\Delta}\backslash(\fra\otimes\bbZ_p)^\times
    \longrightarrow \Cl^+_F(p^\infty), 
\end{equation}
where $\ol{\Delta}$ denotes the closure of $\Delta$ in $\cO_F\otimes\bbZ_p$. 

The \emph{norm} on $\Cl^+_F(p^\infty)$ is the homomorphism 
\[
    \wh{N}\colon\Cl^+_F(p^\infty)\longrightarrow\bbZ_p^\times
\]
defined as the limit of the maps 
\[
    \Cl^+_F(p^m)\longrightarrow(\bbZ/p^m\bbZ)^\times;\ 
    \fra\longmapsto N(\fra)\bmod p^m. 
\]
This $\wh{N}$ is compatible with the norm function $\wh{N}_\fra$ for $\fra\in\frI$ 
through the inclusion $\ol{\Delta}\backslash(\fra\otimes\bbZ_p)^\times\to\Cl^+_F(p^\infty)$, 
i.e., we have $\wh{N}(\fra^{-1}\alpha)=\wh{N}_\fra(\alpha)$ for 
$\alpha\in\ol{\Delta}\backslash(\fra\otimes\bbZ_p)^\times$. 

\begin{theorem}[$p$-adic interpolation of Hecke $L$-values]\label{thm: padicL}
	Let $\chi\colon\Cl^+_F(\frg)\rightarrow\bbC^\times$ be a finite primitive 
	Hecke character of conductor $\frg$, where $\frg$ does not divide any power of $(p)$. 
	Then there exists a unique measure $\mu_\chi$ on $\Cl^+_F(p^\infty)$ 
	satisfying the interpolation property
	\[
		\int_{\Cl^+_F(p^\infty)} \chi_p(\frx)\wh{N}(\frx)^k d\mu_\chi(\frx) 
		= \Biggl( \prod_{\frp\divides(p)}\bigl(1-\chi\!\chi_p(\frp)N\frp^{-k}\bigr)\Biggr)
		L(\chi\!\chi_p,-k)
	\]
	for any finite character $\chi_p\colon\Cl^+_F(p^\infty)\rightarrow\ol\bbQ^\times$ 
	and any $k\in\bbN$. 
\end{theorem}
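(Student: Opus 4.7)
The plan is to build $\mu_\chi$ by assembling, via the bijection \eqref{eq: Cl(p^infty)}, the Lerch zeta measures $\mu_{\eta\Delta}$ of Theorem \ref{thm: interpolation} weighted by Fourier coefficients $c_\chi(\eta)$ and restricted to units so as to produce the Euler factor at primes above $p$. Fix a set of representatives $\frC \subset \frI$ for $\Cl^+_F(1)$, and for each $\fra \in \frC$ let $\iota_\fra \colon \ol\Delta \backslash (\fra \otimes \bbZ_p)^\times \hookrightarrow \Cl^+_F(p^\infty)$ denote the inclusion from \eqref{eq: Cl(p^infty)}. I would define
\[
    \mu_\chi \coloneqq \sum_{\fra \in \frC}\ \sum_{\eta \in \bbT^\fra_\prim[\frg]/\Delta}
    c_\chi(\eta)\cdot(\iota_\fra)_*\bigl(1_{(\fra \otimes \bbZ_p)^\times}\cdot\mu_{\eta\Delta}\bigr),
\]
which is a well-defined measure on $\Cl^+_F(p^\infty)$ since the images of the $\iota_\fra$ partition $\Cl^+_F(p^\infty)$ into disjoint compact open subsets.

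To verify the interpolation property, fix a finite character $\chi_p \colon \Cl^+_F(p^\infty) \to \ol\bbQ^\times$ of conductor $p^m$ and an integer $k \in \bbN$. Pulling $\chi_p$ back along $\iota_\fra$ gives a $\ol\Delta$-invariant function on $(\fra \otimes \bbZ_p)^\times$ which, after extension by zero to $\fra/p^m\fra$, admits the equivariant Fourier expansion
\[
    (\chi_p)_\fra(x) = \sum_{\zeta \in \bbT^\fra[p^m]/\Delta} c_{\chi_p}(\zeta)\, \zeta\Delta(x)
\]
from \eqref{eq: equivariant Fourier}; the same argument as in Proposition \ref{prop: Hecke primitive} shows that only primitive $\zeta$ contribute since $\chi_p$ is primitive of conductor $p^m$. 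Applying Theorem \ref{thm: interpolation} termwise turns each integral into a Lerch zeta value of the form $\cL(\eta\Delta\cdot\zeta\Delta,-k)$, while the factor $1_{(\fra \otimes \bbZ_p)^\times}$ introduces the Euler contribution at the primes above $(p)$. The Fourier coefficients factor multiplicatively as $c_{\chi\chi_p}(\eta\zeta) = c_\chi(\eta) c_{\chi_p}(\zeta)$ via the Chinese remainder decomposition $\fra/\frg p^m \fra \cong \fra/\frg\fra \times \fra/p^m\fra$ together with $(\chi\chi_p)_\fra = \chi_\fra \cdot (\chi_p)_\fra$, so invoking Proposition \ref{prop: Hecke primitive} for the primitive Hecke character $\chi\chi_p$ of conductor $\frg p^m$ reassembles the sum into the claimed formula. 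Uniqueness of $\mu_\chi$ follows from the density of functions of the form $\chi_p \cdot \wh N^k$ in $C(\Cl^+_F(p^\infty), K)$.

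The main difficulty will be extracting the Euler factor $\prod_{\frp | (p)}\bigl(1 - \chi\chi_p(\frp) N\frp^{-k}\bigr)$ in precisely the correct form from the restriction to units. One approach is to expand
\[
    1_{(\fra \otimes \bbZ_p)^\times} = \sum_{S \subset \{\frp \mid (p)\}} (-1)^{|S|}\, 1_{\fra_S \otimes \bbZ_p},\qquad \fra_S \coloneqq \Bigl(\prod_{\frp \in S}\frp\Bigr)\fra,
\]
and use the rescaling $\wh N_\fra(x) = \bigl(\prod_{\frp \in S} N\frp\bigr)\wh N_{\fra_S}(x)$ on $\fra_S \otimes \bbZ_p$ to match each summand against the Lerch measure on $\fra_S$ supplied by Theorem \ref{thm: interpolation}. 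The delicate step is ensuring that the Fourier-coefficient bookkeeping interacts correctly with the multiplicativity of $\chi\chi_p$ on ideals supported above $(p)$ so that the inclusion-exclusion collapses into the desired Euler product.
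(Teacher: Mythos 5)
Your definition of $\mu_\chi$ coincides with the paper's: both take the weighted sum over $\fra\in\frC$ and primitive $\eta\in\bbT^\fra_\prim[\frg]/\Delta$ of $c_\chi(\eta)$ times the restriction of $\mu_{\eta\Delta}$ to the units $\ol\Delta\backslash(\fra\otimes\bbZ_p)^\times$, and both transport these to $\Cl^+_F(p^\infty)$ via \eqref{eq: Cl(p^infty)}. The uniqueness argument is also the same. But the verification step as you describe it has a gap.

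The problem is the claimed factorization $c_{\chi\!\chi_p}(\eta\zeta)=c_\chi(\eta)\,c_{\chi_p}(\zeta)$ ``via the Chinese remainder decomposition $\fra/\frg p^m\fra \cong \fra/\frg\fra \times \fra/p^m\fra$'' together with invoking Proposition~\ref{prop: Hecke primitive} for a putative primitive character $\chi\!\chi_p$ of conductor $\frg p^m$. The hypothesis is that $\frg$ does not divide any power of $(p)$, i.e.\ that $\frg$ has at least one prime factor away from $p$; this does \emph{not} forbid $\frg$ from also having prime factors over $p$. In that case $\gcd(\frg,(p^m))\neq 1$, the CRT isomorphism you invoke is false, and the conductor of $\chi\!\chi_p$ need not be $\frg p^m$ --- indeed, as Remark~\ref{rem: chi chi_p} makes explicit, the conductor of $\chi\!\chi_p$ can differ from $\frg$ in its $p$-primary part in a nontrivial way, and the theorem's formula is only well-posed because the Euler factors at $\frp\divides(p)$ are stripped out. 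So the bookkeeping you flag as ``the main difficulty'' is a genuine obstruction to the route you propose, not merely a computation to be carried out.

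The paper's proof sidesteps this entirely. After expanding $(\chi_p)_\fra$ via finite Fourier inversion (as you do) and applying the Corollary to Theorem~\ref{thm: interpolation}, it does \emph{not} try to reassemble the Fourier coefficients into $c_{\chi\!\chi_p}$ or to re-invoke Proposition~\ref{prop: Hecke primitive}. Instead it uses the inversion formulas for $\chi_\fra$ and $\chi_{p,\fra}$ \emph{separately} inside the double sum to collapse
\[
\sum_{\fra\in\frC}\sum_{\eta,\zeta} c_\chi(\eta)c_{\chi_p}(\zeta)\,
\cL^\p(\eta\Delta\cdot\zeta\Delta,s)
= \sum_{\substack{\fra\subset\cO_F\\(\fra,(p))=1}}\chi(\fra)\chi_p(\fra)N\fra^{-s},
\]
and then recognizes the right-hand side as $\bigl(\prod_{\frp\divides(p)}(1-\chi\!\chi_p(\frp)N\frp^{-s})\bigr)L(\chi\!\chi_p,s)$ directly from the Euler product of the Hecke $L$-function. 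The restriction to units, equivalently the appearance of $\cL^\p$ rather than $\cL$, is exactly what removes the terms with $\fra$ not prime to $(p)$; there is no separate inclusion--exclusion over subsets of primes above $p$ needed at this stage, so the fourth paragraph of your sketch is superfluous once you follow this more direct route. I'd advise replacing your CRT/primitivity argument by the paper's direct Dirichlet-series identification.
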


\begin{remark}\label{rem: chi chi_p}
In the setting of the above theorem, the product $\chi\!\chi_p$ 
of Hecke characters $\chi$ and $\chi_p$ defines a character of $\Cl^+_F(\frg')$, 
where $\frg'$ is some integral ideal which differs from $\frg$ 
only by some powers of the prime divisors of $(p)$. 
Note that the right hand side of the formula in Theorem \ref{thm: padicL} is 
independent of the choice of $\frg'$, 
since the Euler factors at the prime divisors of $(p)$ are deleted. 
In the following, we take $\frg'$ as large (with respect to inclusion of sets) as possible, 
and thus regard $\chi\!\chi_p$ as a primitive Hecke character. 
\end{remark}

Note that, via the bijection \eqref{eq: Cl(p^infty)}, 
giving a measure on $\Cl^+_F(p^\infty)$ is equivalent to giving 
a collection of measures on $\ol{\Delta}\backslash(\fra\otimes\bbZ_p)^\times$ 
with $\fra$ running over a set of representatives $\frC$ for $\Cl^+_F(1)$. 
Roughly speaking, the measure $\mu_\chi$ of Theorem \ref{thm: padicL} 
will be constructed from the measures $\mu_{\xiDelta}$ of Theorem \ref{thm: interpolation} 
through this correspondence, since the Hecke $L$-function is 
a linear combination of the Lerch zeta functions (Proposition \ref{prop: Hecke primitive}). 
The detailed construction will be given in \S\ref{subsection: padicHecke}. 

On the other hand, the construction of $\mu_{\xiDelta}$ is based on 
the $p$-adic interpolation of the Shintani zeta functions, 
on which we discuss in \S\ref{subsection: Shintani zeta}. 
A Shintani zeta function is associated with a cone, and we can express 
the Lerch zeta function in terms of Shintani zeta functions for 
a certain finite collection of cones, called a Shintani decomposition. 
See \S\ref{subsection: Lerch zeta} for the detail.

%%%%%%%%%%%%%%%%%%%%%%%%%%%%%%%%%%%%%%%%%%%%%%%%%%
%
\subsection{Shintani zeta functions and $p$-adic interpolation}\label{subsection: Shintani zeta}
%
%%%%%%%%%%%%%%%%%%%%%%%%%%%%%%%%%%%%%%%%%%%%%%%%%%

In this subsection, we review the definition of the Shintani zeta function, 
and will use the generating function of its values at nonpositive integers 
to construct a $p$-adic measure interpolating these values.

\begin{definition}%\label{def: Shintani zeta}
	Let $\fra$ be a nonzero fractional ideal of $F$. 
    For a periodic function $\phi\colon\fra\to\bbC$ and a cone $\sigma$, 
    we define the \textit{Shintani zeta function} $\zeta_\sigma(\phi,\bss)$ by 
	\begin{equation}\label{eq: Shintani zeta}
		\zeta_\sigma(\phi,\bss)\coloneqq
		\sum_{\alpha\in\breve\sigma\cap\fra} \phi(\alpha) \alpha^{-\bss},
	\end{equation}
	where $\bss=(s_{\tau})\in\bbC^I$ and 
	$\alpha^{-\bss}\coloneqq\prod_{\tau\in I}(\alpha^{\tau})^{-s_{\tau}}$. 
	Moreover, for a single variable $s\in\bbC$, we let 
\[
	\zeta_\sigma(\phi,s)\coloneqq\zeta_\sigma(\phi,(s,\ldots,s)) 
	= \sum_{\alpha\in\breve\sigma\cap\fra} \phi(\alpha)N(\alpha)^{-s}.
\]
\end{definition}

The series \eqref{eq: Shintani zeta} converges if $\Re(s_{\tau})>1$ for any $\tau\in I$.
By \cite{Shi76}*{Proposition 1}, this function has a meromorphic continuation 
to the whole space $\bbC^I$.

%Let $\fra$ be a nonzero fractional ideal of $F$.
%We will first construct a $p$-adic measure on $\fra\otimes\bbZ_p$
%interpolating values at nonpositive integral points of Shintani zeta functions $\zeta_{\sigma}(\xi,\bss)$. 

For any $\tau\in I$, we let $\partial_\tau$ be the differential operator 
defined in \eqref{eq: differential operator}, and 
write $\partial^\bsk\coloneqq\prod_{\tau\in I}\partial_\tau^{k_\tau}$ 
for any $\bsk=(k_\tau)\in\bbN^I$.
%For any $\alpha\in\fra$, we let $U^\fra_\alpha=\bbT^\fra\setminus\{t^\alpha=1\}$,
%which is an affine open set in $\bbT^\fra$. 
The following theorem, based on the work of Shintani, is standard (see for example
\cite{CN79}*{Th\'eor\`eme 5}, \cite{Col88}*{Lemme 3.2}, \cite{BHY19}*{Proposition 4.3}).

\begin{theorem}\label{thm: generate}
	Let $\fra\in\frI$, and let $\sigma$ be a $g$-dimensional cone 
	generated by $\bsalpha\in\sA_\fra^g$.
	Then, for any torsion point $\xi$ in $U^\fra_\bsalpha(\ol{\bbQ})$, we have
	\[
		\partial^\bsk\cG^\fra_{\sigma}(t)\big|_{t=\xi} = \zeta_{\sigma}(\xi,-\bsk), 
	\]
	where $\cG^\fra_\sigma(t)\in\Gamma(U^\fra_\bsalpha,\sO_\bbT)$ is the function 
	defined in Definition \ref{def: G}.
\end{theorem}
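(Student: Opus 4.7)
The plan is to reduce to the classical identification, due to Shintani, of the Shintani zeta values at non-positive integers with certain Taylor coefficients of the generating function $\cG^\fra_\sigma$. Applying $\partial^\bsk$ formally to the power series expansion $\cG^\fra_\sigma(t) = \sum_{\alpha\in\breve\sigma\cap\fra}t^\alpha$ of Definition~\ref{def: G} and using Lemma~\ref{lem: differential} gives
\[
	\partial^\bsk \cG^\fra_\sigma(t) \;=\; \sum_{\alpha\in\breve\sigma\cap\fra}\alpha^\bsk\,t^\alpha,
\]
whose heuristic evaluation at $t=\xi$ equals $\sum_\alpha \xi(\alpha)\alpha^\bsk = \zeta_\sigma(\xi,-\bsk)$. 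Since $\xi(\alpha_i)\neq 1$ for each generator $\alpha_i$ of $\sigma$, the left-hand side is a bona fide value of the rational function $\partial^\bsk\cG^\fra_\sigma(t)$ at $t=\xi$; what remains is to prove that this value agrees with the analytic continuation of the Dirichlet series on the right.

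To that end, I would introduce the auxiliary variable $z=(z_\tau)\in\bbR_+^I$ and consider
\[
	F(z) \;\coloneqq\; \cG^\fra_\sigma(t)\big|_{t^\alpha\,=\,\xi(\alpha)\,e^{-\alpha\cdot z}} \;=\; \sum_{\alpha\in\breve\sigma\cap\fra}\xi(\alpha)\,e^{-\alpha\cdot z},
\]
where $\alpha\cdot z\coloneqq\sum_\tau \alpha^\tau z_\tau$; the series converges for $z\in\bbR_+^I$ since $\alpha\cdot z$ grows linearly on $\breve\sigma$, and the rational expression for $\cG^\fra_\sigma$ together with the condition $\xi(\alpha_i)\neq 1$ shows that $F$ extends holomorphically to a neighborhood of $z=0$. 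For $\Re(s_\tau) > 1$, a standard Fubini argument then yields the Mellin-transform identity
\[
	\Gamma(\bss)\,\zeta_\sigma(\xi,\bss) \;=\; \int_{\bbR_+^I}F(z)\prod_{\tau\in I} z_\tau^{s_\tau-1}\,dz,
\]
where $\Gamma(\bss)\coloneqq\prod_\tau\Gamma(s_\tau)$.

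To finish, I would split the integral at $z=(1,\ldots,1)$: the contribution from the unbounded region is entire in $\bss$, while substituting the Taylor series of $F$ at $z=0$ into the integral on $[0,1]^I$ and integrating term-by-term produces simple poles of residue $(\partial_z^\bsk F)(0)/\bsk!$ at $\bss=-\bsk$. Matching this with the corresponding residue $\frac{(-1)^{\sum_\tau k_\tau}}{\bsk!}\zeta_\sigma(\xi,-\bsk)$ of the left-hand side (using that $\zeta_\sigma(\xi,\bss)$ is regular at $\bss=-\bsk$) gives $\zeta_\sigma(\xi,-\bsk) = (-1)^{\sum_\tau k_\tau}(\partial_z^\bsk F)(0)$. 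Since $\partial_{z_\tau} = -\partial_\tau$ as operators under the substitution $t^\alpha = \xi(\alpha)e^{-\alpha\cdot z}$, the two signs cancel and the claim follows. The step that requires the most care is the justification of the meromorphic continuation and residue extraction in the multivariable setting, with uniform control of the Taylor remainder and of the tail integral; this is handled in \cite{CN79}*{Th\'eor\`eme 5}, \cite{Col88}*{Lemme 3.2}, and \cite{BHY19}*{Proposition 4.3}, and in our setting one may alternatively reduce to the narrow class number one case of \cite{BHY19} via the equivalence of Remark~\ref{rem: description}.
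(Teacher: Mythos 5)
The paper does not actually prove Theorem~\ref{thm: generate}; it simply labels it "standard, based on the work of Shintani" and points to \cite{CN79}*{Th\'eor\`eme 5}, \cite{Col88}*{Lemme 3.2}, \cite{BHY19}*{Proposition 4.3}. Your proposal is a correct and reasonably detailed reconstruction of the Mellin-transform argument that those references use, so in that sense you are following exactly the approach the paper implicitly invokes. The key reductions are all handled correctly: the rational-function representation of $\cG^\fra_\sigma$ from Definition~\ref{def: G} together with $\xi(\alpha_i)\neq 1$ gives analyticity of $F(z)$ at $z=0$; the substitution $t^\alpha=\xi(\alpha)e^{-\alpha\cdot z}$ makes $\partial_{z_\tau}=-\partial_\tau$ so that $\partial^{\bsk}\cG^\fra_\sigma\big|_{t=\xi}=(-1)^{\sum_\tau k_\tau}\partial_z^{\bsk}F(0)$; Fubini yields $\Gamma(\bss)\zeta_\sigma(\xi,\bss)=\int_{\bbR_+^I}F(z)\prod_\tau z_\tau^{s_\tau-1}\,dz$ for $\Re(s_\tau)>1$; and the split at $z=(1,\dots,1)$ isolates the relevant poles. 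One small refinement worth stating: you do not need to assume in advance that $\zeta_\sigma(\xi,\bss)$ is regular at $\bss=-\bsk$---since the tail integral is entire and the bounded-region integral contributes at $\bss=-\bsk$ exactly a polar factor $\prod_\tau(s_\tau+k_\tau)^{-1}$ (with the Taylor remainder $R_N$ holomorphic in a sufficiently large region), the pole structure of the integral matches that of $\Gamma(\bss)$, and the regularity of $\zeta_\sigma$ at $-\bsk$ falls out as a consequence. The multivariable residue extraction, including uniform control of the remainder $R_N$ and its associated integral, is the genuinely technical point, and you correctly flag that this is exactly what the cited references supply. Your closing remark about reducing to the narrow-class-number-one case via Remark~\ref{rem: description} is harmless but unnecessary here: the statement and argument are purely local to the fixed torus $\bbT^\fra$ and cone $\sigma$, so no class-number hypothesis enters.
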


Let $\xi\in\bbT^\fra(\ol\bbQ)$ be a torsion point lying in $U^\fra_\bsalpha$. 
Then we may expand the rational function $\cG^\fra_\sigma$ around $\xi$ to obtain a power series
\[
	\wt{\cG}^\fra_{\sigma,\xi}(T)\in\bbQ(\xi)\llbracket T^\alpha\mid\alpha\in\fra\rrbracket,
\]
where $\bbQ(\xi)\llbracket T^\alpha\mid\alpha\in\fra\rrbracket$ denotes 
the completion of $\bbQ(\xi)[t^\alpha\mid\alpha\in\fra]$ 
with respect to the ideal generated by $T^\alpha\coloneqq\xi(\alpha)^{-1}t^\alpha-1$ 
for all $\alpha\in\fra$. Note that, if we choose a $\bbZ$-basis $\gamma_1,\ldots,\gamma_g$ of $\fra$, 
we may identify this ring with the usual ring 
$\bbQ(\xi)\llbracket T^{\gamma_1},\ldots,T^{\gamma_g}\rrbracket$ of formal power series. 

Next, we further assume that the torsion point $\xi$ lies in $\wh{U}^\fra_{\bsalpha K}$, 
i.e., $\xi(\alpha_j)\not\equiv 1$ modulo the maximal ideal of $O_{K(\xi)}$ for $j=1,\ldots,g$. 
Then we see that  
\[
	\wt\cG^\fra_{\sigma,\xi}(T)\in\cO_{K(\xi)}\llbracket T^\alpha\mid \alpha\in\fra\rrbracket
\]
by observing that 
\[
    \frac{1}{1-t^{\alpha_j}}
    =\frac{1}{1-\xi(\alpha_j)(1+T^{\alpha_j})}
    =\sum_{n=0}^\infty\frac{(\xi(\alpha_j)T^{\alpha_j})^n}{(1-\xi(\alpha_j))^{n+1}}
    \in\cO_{K(\xi)}\llbracket T^\alpha\mid \alpha\in\fra\rrbracket 
\]
for $j=1,\ldots,g$. 
By Mahler's theorem, this power series defines a $p$-adic measure $\mu_{\sigma,\xi}$ 
on $\fra\otimes\bbZ_p$ satisfying the following conditions.

\begin{proposition}\label{prop: interpolation}
	Let $\xi$ be a torsion point lying in $\wh{U}^\fra_{\bsalpha K}$ 
	and put $\sigma=\sigma_\bsalpha$. 
	Then there exists a unique measure $\mu_{\sigma,\xi}$ on $\fra\otimes\bbZ_p$ satisfying
	the interpolation property
	\begin{equation}\label{eq: interpolation}
		\int_{\fra\otimes\bbZ_p} \xi_p(x) x^\bsk d\mu_{\sigma,\xi}(x) = \zeta_{\sigma}(\xi\xi_p,-\bsk)
	\end{equation}
	for any $\xi_p\in\bbT^\fra[p^\infty]\coloneqq\bigcup_{m\in\bbN}\bbT^\fra[p^m]$ 
	and $\bsk\in\bbN^I$. 
\end{proposition}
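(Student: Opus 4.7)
The plan is to construct $\mu_{\sigma,\xi}$ as the $p$-adic measure associated to $\wt\cG^\fra_{\sigma,\xi}(T)$ via Mahler's theorem (equivalently, the Amice transform), and then to verify the interpolation formula by combining specialization at torsion characters with the generating-function identity of Theorem \ref{thm: generate}.

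First I fix a $\bbZ$-basis $\gamma_1,\ldots,\gamma_g$ of $\fra$, identifying $\fra\otimes\bbZ_p\cong\bbZ_p^g$ and the ambient formal power series ring with $\cO_{K(\xi)}\llbracket T^{\gamma_1},\ldots,T^{\gamma_g}\rrbracket$. By Mahler's theorem, bounded $\cO_{K(\xi)}$-valued measures on $\fra\otimes\bbZ_p$ are in bijection with elements of this power series ring via the Amice transform
\[
\mu\longmapsto A_\mu(T)=\int_{\fra\otimes\bbZ_p}(1+T)^x\,d\mu(x),\qquad (1+T)^x\coloneqq\prod_{j=1}^g(1+T^{\gamma_j})^{x_j}.
\]
Since $\wt\cG^\fra_{\sigma,\xi}(T)\in\cO_{K(\xi)}\llbracket T\rrbracket$ by the computation preceding the statement, this produces a unique measure $\mu_{\sigma,\xi}$ with $A_{\mu_{\sigma,\xi}}(T)=\wt\cG^\fra_{\sigma,\xi}(T)$, and uniqueness in the proposition will follow from injectivity of the Amice transform once the interpolation values below are seen to determine $A_{\mu_{\sigma,\xi}}$.

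Next, I match substitution in $T$ with integration against characters. For $\xi_p\in\bbT^\fra[p^\infty]$, the assignment $T^\alpha\mapsto\xi_p(\alpha)-1$ sends $(1+T^\alpha)$ to $\xi_p(\alpha)$, so $(1+T)^x$ becomes $\xi_p(x)$ after continuous extension of $\xi_p$ from $\fra$ to $\fra\otimes\bbZ_p$. Combining with the defining identity $t^\alpha=\xi(\alpha)(1+T^\alpha)$ yields
\[
\int_{\fra\otimes\bbZ_p}\xi_p(x)\,d\mu_{\sigma,\xi}(x)=\wt\cG^\fra_{\sigma,\xi}(T)\bigm|_{T^\alpha=\xi_p(\alpha)-1}=\cG^\fra_\sigma(\xi\xi_p),
\]
which by Theorem \ref{thm: generate} is $\zeta_\sigma(\xi\xi_p,0)$; this settles the case $\bsk=0$. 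For general $\bsk\in\bbN^I$, I apply $\partial^\bsk=\prod_\tau\partial_\tau^{k_\tau}$ to the Amice identity for $\mu_{\sigma,\xi}$ before specialization, invoking the key fact that $\partial_\tau$ preserves $\cO_{K(\xi)}\llbracket T\rrbracket$ and acts on the Amice monomial $(1+T)^x$ by multiplication by $x^\tau$. Specializing the resulting identity at $T^\alpha=\xi_p(\alpha)-1$ and using Theorem \ref{thm: generate} on the right-hand side produces
\[
\int_{\fra\otimes\bbZ_p}\xi_p(x)\,x^\bsk\,d\mu_{\sigma,\xi}(x)=\partial^\bsk\cG^\fra_\sigma(t)\bigm|_{t=\xi\xi_p}=\zeta_\sigma(\xi\xi_p,-\bsk),
\]
as required.

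The main technical point is to verify that $\partial_\tau$ acts on the Amice transform side by multiplication by $x^\tau$ inside the measure, i.e., $\partial^\bsk A_\mu(T)=\int_{\fra\otimes\bbZ_p}x^\bsk(1+T)^x\,d\mu(x)$. This reduces to the elementary observation that $(1+T^{\gamma_k})\,\partial/\partial T^{\gamma_k}$ preserves $\cO_{K(\xi)}\llbracket T\rrbracket$ and scales $(1+T^{\gamma_k})^{x_k}$ by $x_k$; decomposing $\partial_\tau=\sum_k\gamma_k^\tau(1+T^{\gamma_k})\,\partial/\partial T^{\gamma_k}$ via Lemma \ref{lem: differential} and noting $x^\tau=\sum_k\gamma_k^\tau x_k$ yields the claim, while the interchange of differentiation and integration is automatic on the Mahler expansion because $\partial^\bsk$ preserves the bounded topology on $\cO_{K(\xi)}\llbracket T\rrbracket$.
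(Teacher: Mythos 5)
Your proposal is correct and follows essentially the same route as the paper: both fix a $\bbZ$-basis of $\fra$ to invoke Mahler's theorem on the Taylor expansion $\wt\cG^\fra_{\sigma,\xi}(T)$, both extract the factor $x^\bsk$ by applying the operators $\partial_\tau$ to the Amice/Mahler identity, and both conclude by substituting $T^\alpha=\xi_p(\alpha)-1$ and invoking Theorem~\ref{thm: generate}. The only cosmetic difference is that the paper phrases the differential step through the coordinate operators $\partial_{\log T_j}$ and then translates to $\partial_\tau$ via the formula defining $\partial_\tau$, whereas you work with $\partial_\tau$ directly on the Amice monomial $(1+T)^x$; likewise, your uniqueness argument via injectivity of the Amice transform is equivalent to the paper's appeal to the density of $p$-power torsion characters.
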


\begin{proof}
    The uniqueness follows from the fact that the elements of $\bbT^\fra[p^\infty]$ 
    spans a dense subspace in the space of continuous functions on $\fra\otimes\bbZ_p$. 
    To construct the measure, we fix a $\bbZ$-basis $\bsgamma=(\gamma_1,\ldots,\gamma_g)$ of $\fra$, 
	which gives an isomorphism $\bbZ_p^g\cong\fra\otimes\bbZ_p$ of $\bbZ_p$-modules 
	mapping $\bsx=(x_1,\ldots,x_g)$ to $\bsx\cdot\bsgamma\coloneqq x_1\gamma_1+\cdots+x_g\gamma_g$.
	If we let $T_j\coloneqq \xi(\gamma_j)^{-1}t^{\gamma_j}-1$ for $j=1,\ldots,g$, 	
	then we have 
	\[
		\wt\cG^\fra_{\sigma,\xi}(T)\in\cO_{K(\xi)}\llbracket T_1,\ldots,T_g\rrbracket,
	\]
	and Mahler's Theorem (see for example \cite{Hid93}*{\S3.7 Theorem 1}) gives a $p$-adic measure 
	$\mu_{\sigma,\xi}^{\bsgamma}$ on $\bbZ_p^g$ satisfying
	\begin{align*}
		\int_{\bbZ_p^g}  (1+T_1)^{x_1}\cdots (1+T_g)^{x_g}
		d\mu_{\sigma,\xi}^{\bsgamma}(\bsx)&=
		\wt\cG^\fra_{\sigma,\xi}(T)
		\in\cO_{K(\xi)}\llbracket T_1,\ldots,T_g\rrbracket.
	\end{align*}
	Then the measure $\mu_{\sigma,\xi}^{\bsgamma}$ satisfies	
	\begin{align*}
		\int_{\bbZ_p^g} \xi_p(\bsx\cdot\bsgamma) x_1^{n_1}\cdots x_g^{n_g}
		d\mu_{\sigma,\xi}^{\bsgamma}(\bsx)&=
		 \partial_{\log  T_1}^{n_1}\cdots
		\partial_{\log T_g}^{n_g}\wt\cG^\fra_{\sigma,\xi}(T)
		\big|_{(T_j) =(\xi_p(\gamma_j)-1)}\\
		&=
		 \partial_{\log \gamma_1}^{n_1}\cdots
		\partial_{\log \gamma_g}^{n_g}\cG^\fra_{\sigma}(t)
		\big|_{t =\xi\xi_p},
	\end{align*}
	for any $(n_j)\in\bbN^g$, where $\partial_{\log T_j}\coloneqq(1+T_j)\frac{d}{dT_j}$.
	By \eqref{eq: differential operator} and Theorem \ref{thm: generate}, we see that 
	the $p$-adic measure $\mu_{\sigma,\xi}$ on $\fra\otimes\bbZ_p$ induced by 
	$d\mu_{\sigma,\xi}^{\bsgamma}$ through the isomorphism $\bbZ_p^g\cong\fra\otimes\bbZ_p$ 
	satisfies the interpolation property \eqref{eq: interpolation}. 
\end{proof}

\begin{remark}\label{rem: mu_sigma,xi}
    Let $x\in F^\times_+$, and consider the measures 
    $\mu_{\sigma,\xi^x}$ on $\fra\otimes\bbZ_p$ and $\mu_{x\sigma,\xi}$ on $x\fra\otimes\bbZ_p$, 
    where $\xi$ is a torsion point lying in $\wh{U}^{x\fra}_{x\bsalpha K}$ 
    with $\bsalpha\in\sA_\fra^g$ and $\sigma=\sigma_\bsalpha$. 
    Then the push-forward of $\mu_{\sigma,\xi^x}$ with respect to the multiplication by $x$ 
    is equal to $\mu_{x\sigma,\xi}$. This follows either from the equivariance 
    $\bra{x}^*\cG^\fra_\sigma=\cG^{x\fra}_{x\sigma}$ of generating functions, 
    or from the interpolation property \eqref{eq: interpolation}. 
\end{remark}

%%%%%%%%%%%%%%%%%%%%%%%%%%%%%%%%%%%%%%%%%%%%%%%%%%
%
\subsection{$p$-adic Interpolation of Lerch Zeta Values}\label{subsection: Lerch zeta}
%
%%%%%%%%%%%%%%%%%%%%%%%%%%%%%%%%%%%%%%%%%%%%%%%%%%

The Lerch zeta functions may be expressed in terms of Shintani zeta functions
via a choice of a Shintani decomposition.
We will construct a $p$-adic measure $\mu_{\xiDelta}$
interpolating special values of Lerch zeta values,
using the measures $\mu_{\sigma,\xi}$ constructed in Proposition \ref{prop: interpolation}.

By \cite{Yam10}*{Proposition 5.6}, there exists a set $\Phi$ of $g$-dimensional simplicial cones 
stable under the action of $\Delta$, such that $\Delta\backslash\Phi$ is finite and 
\begin{equation*}%\label{eq: upper closure}
	\bbR_+^I=\coprod_{\sigma\in\Phi}\breve\sigma. 
\end{equation*}
In this article, we call such $\Phi$ a \textit{Shintani decomposition} of $\bbR^I_+$. 
From the definition of the Lerch zeta function in \eqref{eq: Lerch}, we have the following.

\begin{proposition}%\label{prop: shintani}
	Let $\xi$ be a torsion point of $\bbT^\fra(\ol\bbQ)$.
	If $\Phi$ is a Shintani decomposition, then we have 
	\[
		\cL(\xiDelta, s) = N\fra^s\sum_{\sigma\in\Delta\backslash\Phi}\zeta_\sigma(\xiDelta,s).
	\]
\end{proposition}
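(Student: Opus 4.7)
The plan is to expand $\cL(\xiDelta,s)$ by substituting the Shintani decomposition $\bbR_+^I = \coprod_{\sigma\in\Phi}\breve\sigma$, and then collect the resulting terms into $\Delta$-orbits of cones.

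First, I would fix a (finite) set $\frE\subset\Phi$ of representatives for $\Delta\backslash\Phi$. Intersecting the Shintani decomposition of $\bbR_+^I$ with the fractional ideal $\fra$ gives the disjoint decomposition
\[
\fra_+ = \coprod_{\sigma\in\Phi}(\breve\sigma\cap\fra).
\]

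Second, I would verify that $D\coloneqq\coprod_{\sigma\in\frE}(\breve\sigma\cap\fra)$ is a fundamental domain for the action of $\Delta$ on $\fra_+$. This reduces to showing that $\Delta$ acts freely on $\Phi$: if $\e\sigma=\sigma$ for some $\e\in\Delta$ and $\sigma=\sigma_\bsalpha$ with $\bsalpha=(\alpha_1,\ldots,\alpha_g)\in\sA_\fra^g$, then multiplication by $\e$ permutes the unique $\fra$-primitive generators of the edges of $\sigma$. Since $(\alpha_1,\ldots,\alpha_g)$ is an $\bbR$-basis of $F\otimes\bbR$, the matrix of $\e$ in this basis is a permutation matrix, so $\e$ has finite order in $\Delta=(\cO_F^\times)_+$. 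But $\Delta$ is torsion-free, because the only totally positive root of unity in the totally real field $F$ is $1$. Hence $\e=1$, so $\Delta$ acts freely on $\Phi$, which (together with the freeness of $\Delta$ on $\fra_+$) yields that $D$ is a fundamental domain.

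Third, using the absolute convergence for $\Re(s)>1$ and the $\Delta$-invariance of $\xiDelta$, I would compute
\begin{align*}
\cL(\xiDelta,s)
&= \sum_{\alpha\in D}\xiDelta(\alpha)N(\fra^{-1}\alpha)^{-s}
= \sum_{\sigma\in\frE}\sum_{\alpha\in\breve\sigma\cap\fra}\xiDelta(\alpha)N(\fra^{-1}\alpha)^{-s} \\
&= N\fra^{s}\sum_{\sigma\in\frE}\sum_{\alpha\in\breve\sigma\cap\fra}\xiDelta(\alpha)N(\alpha)^{-s}
= N\fra^{s}\sum_{\sigma\in\Delta\backslash\Phi}\zeta_\sigma(\xiDelta,s),
\end{align*}
where the third equality uses the multiplicativity $N(\fra^{-1}\alpha)=N\fra^{-1}N(\alpha)$, and the last equality is justified by the $\Delta$-equivariance $\zeta_{\e\sigma}(\xiDelta,s)=\zeta_\sigma(\xiDelta,s)$ (which in turn follows from $N(\e)=1$ and the $\Delta$-invariance of $\xiDelta$), so that the sum over $\frE$ descends to a well-defined sum over $\Delta\backslash\Phi$.

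The main technical point is the freeness of the $\Delta$-action on $\Phi$ in Step 2, needed to identify $D$ as a genuine fundamental domain rather than a multi-cover; this is secured by the torsion-freeness of $(\cO_F^\times)_+$ for totally real $F$. Everything else is a bookkeeping computation using the defining series for $\cL$ and $\zeta_\sigma$.
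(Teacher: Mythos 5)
Your proof is correct and is essentially the argument the paper leaves implicit when it asserts the identity ``from the definition.'' The freeness of the $\Delta$-action on $\Phi$, which you deduce from the torsion-freeness of $(\cO_F^\times)_+$ via the permutation-matrix observation, is exactly the right technical point to nail down so that $D=\coprod_{\sigma\in\frE}(\breve\sigma\cap\fra)$ is a genuine fundamental domain rather than a multi-cover.
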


The following result is obtained by the same argument as in the proof of \cite{BHY19}*{Lemma 5.3}, 
by replacing the condition $\xi(\alpha)\ne 1$ by $\xi(\alpha)\not\equiv 1$ 
modulo the maximal ideal of $O_{K(\xi)}$. 

\begin{lemma}\label{lem: SD}
	Let $\xi$ be a torsion point of $\bbT^\fra$ whose order is not a $p$-power. 
	Then there exists a Shintani decomposition $\Phi_\xi$ of $\bbR^I_+$ such that 
	$\xi$ lies in $\wh{U}^\fra_{\bsalpha K}$ whenever $\bsalpha\in\sA_\fra^g$ 
	is a generator of some $\sigma\in\Phi_\xi$. 
\end{lemma}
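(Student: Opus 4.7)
The plan is to follow the proof of \cite{BHY19}*{Lemma 5.3}, which establishes the analogous statement under the stronger condition $\xi(\alpha)\ne 1$. The only property of that condition needed in the original argument is that the ``bad set'' of $\alpha\in\fra$ to be avoided is a finite, $\Delta$-invariant union of proper full-rank sublattices of $\fra$. My task is thus to verify that this structural property persists under the weaker condition $\xi(\alpha)\not\equiv 1$ modulo the maximal ideal of $\cO_{K(\xi)}$, and then to invoke the refinement construction of \cite{BHY19} verbatim.

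To identify the new bad set, I would decompose the torsion point as $\xi=\xi_p\cdot\xi^{(p)}$, where $\xi_p$ has $p$-power order and $\xi^{(p)}$ has order prime to $p$. Values of $\xi_p$ are $p$-power roots of unity and hence always reduce to $1$ in the residue field, while values of $\xi^{(p)}$ are prime-to-$p$ roots of unity and reduce to $1$ only if they equal $1$; therefore the congruence $\xi(\alpha)\equiv 1$ modulo the maximal ideal is equivalent to $\xi^{(p)}(\alpha)=1$, i.e., $\alpha\in L\coloneqq\ker\xi^{(p)}\subset\fra$. The hypothesis that the order of $\xi$ is not a power of $p$ forces $\xi^{(p)}\ne 1$, so $L$ is a proper full-rank sublattice of $\fra$. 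To enforce $\Delta$-stability of the eventual decomposition, I enlarge the bad set to $\wt L\coloneqq\bigcup_{\varepsilon\in\Delta}\varepsilon^{-1}L$; even though $\Delta$ may be infinite, the image of $L$ in the finite $\cO_F$-module $\fra/\frn^{(p)}\fra$ (where $\frn^{(p)}$ is the prime-to-$p$ part of the order of $\xi$) has only finitely many $\Delta$-translates, so $\wt L$ is in fact a finite union of proper full-rank sublattices of $\fra$, all containing $\frn^{(p)}\fra$.

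This places us in exactly the situation handled by \cite{BHY19}*{Lemma 5.3}. Starting from any Shintani decomposition produced by \cite{Yam10}*{Proposition 5.6}, one performs $\Delta$-equivariant simplicial refinement: whenever a cone has a primitive generator lying in $\wt L$, subdivide it using an interior vector chosen outside $\wt L$ and propagate the subdivision to all $\Delta$-translates. The main obstacle, and the heart of the cited argument, is that this refinement can be carried out consistently while keeping the decomposition simplicial and $\Delta$-stable; it rests ultimately on the linear algebraic fact that a finite union of proper sublattices cannot cover any open subcone of $\bbR^I_+$, a property which continues to hold for our $\wt L$. This produces the desired Shintani decomposition $\Phi_\xi$.
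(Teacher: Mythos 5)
Your proposal is correct and follows the same route as the paper, which simply asserts that the argument of \cite{BHY19}*{Lemma 5.3} carries over after replacing the condition $\xi(\alpha)\ne 1$ by $\xi(\alpha)\not\equiv 1$ modulo the maximal ideal. You go further than the paper's one-line justification by explicitly reducing the new condition to $\alpha\notin\ker\xi^{(p)}$ via the decomposition $\xi=\xi_p\,\xi^{(p)}$ and by verifying that the resulting bad set is still a finite $\Delta$-invariant union of proper full-rank sublattices, which is exactly the structural hypothesis the cited refinement argument needs.
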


Now let us prove Theorem \ref{thm: interpolation}. 

\begin{proof}[Proof of Theorem \ref{thm: interpolation}]
    Let $\xi$ be a torsion point of $\bbT^\fra\setminus\bbT^\fra[p^\infty]$.
    In what follows, we fix a Shintani decomposition $\Phi_\xi$ as in Lemma \ref{lem: SD}, 
    and denote by $\ol{\mu}_{\sigma,\xi}$ for $\sigma\in\Phi_\xi$ 
    the push-forward of the measure $\mu_{\sigma,\xi}$ 
    with respect to the projection $\fra\otimes\bbZ_p\to\ol{\Delta}\backslash(\fra\otimes\bbZ_p)$. 
    Then Remark \ref{rem: mu_sigma,xi} shows that $\ol{\mu}_{\sigma,\xi}$ 
    depends only on the orbit of $\sigma$ in $\Delta_\xi\backslash\Phi_\xi$. 
    Therefore, we may define a measure $\mu_{\xiDelta}$ on $\ol{\Delta}\backslash(\fra\otimes\bbZ_p)$ 
    by 
    \[
        \mu_{\xiDelta}\coloneqq\sum_{\sigma\in \Delta_\xi\backslash\Phi_\xi} \ol{\mu}_{\sigma,\xi}. 
    \]
    If we choose a set of representatives $C_\xi$ of $\Delta_\xi\backslash\Phi_\xi$, we see that 
	\begin{align*}
		\int_{\ol\Delta\backslash(\fra\otimes\bbZ_p)}\xipDelta(x)\wh{N}_\fra(x)^k d\mu_{\xiDelta}(x)
		&=\sum_{\sigma\in C_\xi}\sum_{\e_p\in\Delta_{\xi_p}\backslash\Delta}
		\int_{\fra\otimes\bbZ_p}\xi_p^{\e_p}(x)\wh{N}_\fra(x)^k d\mu_{\sigma,\xi}(x)\\
		&=N\fra^{-k}\sum_{\sigma\in C_\xi}\sum_{\e_p\in\Delta_{\xi_p}\backslash\Delta}
		\zeta_\sigma(\xi\xi_p^{\e_p},-k)
	\end{align*}
	by the identity \eqref{eq: N_a} and Proposition \ref{prop: interpolation}. 
	Since we have 
	\begin{align*}
		N\fra^s\sum_{\sigma\in C_\xi}&\sum_{\e_p\in\Delta_{\xi_p}\backslash\Delta}
		\zeta_\sigma(\xi\xi_p^{\e_p},s)
		=\sum_{\sigma\in C_\xi}
		\sum_{\alpha\in\breve\sigma\cap\fra}
		\sum_{\e_p\in\Delta_{\xi_p}\backslash\Delta}
		\xi(\alpha)\xi_p^{\e_p}(\alpha)N(\fra^{-1}\alpha)^{-s}\\
		&= \sum_{\alpha\in\Delta\backslash\fra_+}
		\sum_{\e_p\in\Delta_{\xi_p}\backslash\Delta}
		\xiDelta(\alpha)\xi_p^{\e_p}(\alpha)N(\fra^{-1}\alpha)^{-s}
		= \sum_{\alpha\in\Delta\backslash\fra_+}
		\xiDelta(\alpha)\xipDelta(\alpha)N(\fra^{-1}\alpha)^{-s}\\ 
		&=\cL(\xiDelta\,\xipDelta,s), 
	\end{align*}
	our assertion follows by analytic continuation.
\end{proof}

We next investigate the restriction of the measure $\mu_{\xiDelta}$ to
$\ol\Delta\backslash(\fra\otimes\bbZ_p)^\times$. 
We first define a $p$-modified variant of the Lerch zeta function.

\begin{definition}
	Let $\fra$ be a fractional ideal of $F$, and let $\xi$ be a torsion point of $\bbT^\fra(\ol\bbQ)$.  
	For any rational prime $p$,
	we define the $p$-modified \textit{Lerch zeta function} for $\xi$ by
	\[
		\cL^\p(\xiDelta,s)\coloneqq
		\sum_{\substack{\alpha\in\Delta\backslash\fra_+\\\alpha\in(\fra\otimes\bbZ_p)^\times}}
		\xiDelta(\alpha) N(\fra^{-1}\alpha)^{-s}.
	\]
\end{definition}
We note that for any prime ideal $\frp$ of $F$ such that $\frp\divides(p)$, we have
\[
	\boldsymbol{1}_{\frp}(x)\coloneqq
	\frac{1}{N\frp} \sum_{\xi_\frp\in\bbT^\fra[\frp]} \xi_\frp(x) = 
	\begin{cases}
		1 & x\in \frp\fra \otimes\bbZ_\frp,\\
		0 & x\not\in \frp\fra \otimes\bbZ_\frp.
	\end{cases}
\]
This shows that
\[	
	\boldsymbol{1}_{(\fra\otimes\bbZ_\frp)^\times}(x)\coloneqq
	\prod_{\frp\divides(p)}
	\biggl(1 - \frac{1}{N\frp} \sum_{\xi_\frp\in\bbT^\fra[\frp]/\Delta} \xi_\frp\Delta(x) \biggr)
	=\begin{cases}
		0  &  x\not\in (\fra\otimes\bbZ_\frp)^\times,\\
		1 & x\in (\fra\otimes\bbZ_\frp)^\times,
	\end{cases}
\]
which induces a function on $\ol\Delta\backslash(\fra\otimes\bbZ_p)$.
In what follows, let $P\coloneqq\{\frp\in\Spec\cO_F\mid\frp\divides(p)\}$ be the
set of prime ideals of $\cO_F$ dividing $(p)$ and let
$\frp_J\coloneqq\prod_{\frp\in J}\frp$ for any $J\subset P$.
Theorem \ref{thm: interpolation} gives the following corollary.

\begin{corollary}
	Let $\xi$ be a torsion point of $\bbT^\fra\setminus\bbT^\fra[p^\infty]$.
	The restriction of the measure $\mu_{\xiDelta}$ to $\ol\Delta\backslash(\fra\otimes\bbZ_p)^\times$
	satisfies the interpolation property
	\[
		\int_{\ol\Delta\backslash(\fra\otimes\bbZ_p)^\times}
		\xipDelta(x)\wh{N}_\fra(x)^k d\mu_{\xiDelta}(x)
		=\cL^\p(\xiDelta\,\xipDelta,-k)
	\]
	for any $\xi_p\in\bbT^\fra[p^\infty]$ and $k\in\bbN$.
\end{corollary}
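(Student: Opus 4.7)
The plan is to introduce the characteristic function of $(\fra\otimes\bbZ_p)^\times$ into the integral, expand it by inclusion-exclusion over the set $P$ of primes of $F$ above $p$, and evaluate each resulting summand using the term-by-term interpolation of Proposition \ref{prop: interpolation}. First, I would rewrite the integral on the unit locus as
\[
\int_{\ol\Delta\backslash(\fra\otimes\bbZ_p)}\boldsymbol{1}_{(\fra\otimes\bbZ_p)^\times}(x)\xipDelta(x)\wh{N}_\fra(x)^k d\mu_{\xiDelta}(x)
\]
and expand the characteristic function in its unaveraged form
$\boldsymbol{1}_{(\fra\otimes\bbZ_p)^\times}(x) = \sum_{J\subset P}\frac{(-1)^{|J|}}{N\frp_J}\sum_{\xi_J\in\bbT^\fra[\frp_J]}\xi_J(x)$,
which follows from the identity $\boldsymbol{1}_{\frp\fra\otimes\bbZ_\frp}(x) = \frac{1}{N\frp}\sum_{\xi_\frp\in\bbT^\fra[\frp]}\xi_\frp(x)$ together with the Chinese remainder theorem and multiplicativity.

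Second, following the proof of Theorem \ref{thm: interpolation}, I would use the decomposition $\mu_{\xiDelta}=\sum_{\sigma\in C_\xi}\ol{\mu}_{\sigma,\xi}$ together with the expansion $\xipDelta(x)=\sum_{\e_p\in\Delta_{\xi_p}\backslash\Delta}\xi_p^{\e_p}(x)$ to reduce each summand to an integral of the form
\[
\int_{\fra\otimes\bbZ_p}\xi_p^{\e_p}(x)\xi_J(x)\wh{N}_\fra(x)^k d\mu_{\sigma,\xi}(x).
\]
Since $\xi_p^{\e_p}\xi_J\in\bbT^\fra[p^\infty]$, Proposition \ref{prop: interpolation} applies and evaluates this integral to $N\fra^{-k}\zeta_\sigma(\xi\xi_p^{\e_p}\xi_J,-k)$, using $\wh{N}_\fra(x)^k = N\fra^{-k}N(x)^k$.

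Third, I would reassemble the pieces: for each fixed $\sigma$, $\e_p$ and $\alpha\in\breve\sigma\cap\fra$, summing over $\xi_J\in\bbT^\fra[\frp_J]$ collapses via $\frac{1}{N\frp_J}\sum_{\xi_J}\xi_J(\alpha) = \boldsymbol{1}_{\frp_J\fra}(\alpha)$, and the alternating sum over $J\subset P$ reconstitutes $\boldsymbol{1}_{(\fra\otimes\bbZ_p)^\times}(\alpha)$ on the cone side. Summing over $\sigma\in C_\xi$ and $\e_p$ and applying the orbit-unfolding argument used at the end of the proof of Theorem \ref{thm: interpolation}, now restricted to $\alpha\in(\fra\otimes\bbZ_p)^\times$, will yield $\cL^\p(\xiDelta\xipDelta,-k)$, as desired. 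The main bookkeeping obstacle is that $\xipDelta(x)\xi_J(x)$ is a product of two $\Delta$-orbit sums rather than a single orbit sum, so one cannot simply substitute $\xi_p\mapsto\xi_p\xi_J$ in Theorem \ref{thm: interpolation}; descending to individual characters at the level of the Shintani measures $\mu_{\sigma,\xi}$ and invoking Proposition \ref{prop: interpolation} bypasses this cleanly.
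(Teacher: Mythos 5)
Your proposal is correct and follows essentially the same route as the paper's proof: insert the indicator of $(\fra\otimes\bbZ_p)^\times$, expand it by inclusion--exclusion over $J\subset P$ into sums of \emph{individual} characters $\xi_{\frp_J}\in\bbT^\fra[\frp_J]$, unfold $\mu_{\xiDelta}$ into $\sum_{\sigma\in C_\xi}\ol\mu_{\sigma,\xi}$ and $\xipDelta$ into $\sum_{\e}\xi_p^{\e}$, apply Proposition \ref{prop: interpolation} at the level of $\mu_{\sigma,\xi}$ to obtain $N\fra^{-k}\zeta_\sigma(\xi\xi_{\frp_J}\xi_p^{\e},-k)$, and reassemble via the finite Fourier identity and orbit-unfolding. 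The only thing to tidy is your parenthetical remark: $\xi_J$ is a single character, not a $\Delta$-orbit sum, so $\xipDelta(x)\xi_J(x)$ is not literally a product of two orbit sums; the genuine point you are making---that one cannot simply replace $\xi_p$ by $\xi_p\xi_J$ in Theorem \ref{thm: interpolation}, and that descending to $\mu_{\sigma,\xi}$ together with Proposition \ref{prop: interpolation} cleanly sidesteps this---is correct and is exactly how the paper argues.
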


\begin{proof}
	By choosing a set of representatives $C_\xi$ of $\Delta_\xi\backslash\Phi_\xi$, we have
	\begin{align*}
    	\int_{\ol\Delta\backslash(\fra\otimes\bbZ_p)^\times}
    	\xipDelta(x)& \wh{N}_\fra(x)^k d\mu_{\xiDelta}(x)
    	= \int_{\ol\Delta\backslash(\fra\otimes\bbZ_p)}	
    	\boldsymbol{1}_{(\fra\otimes\bbZ_\frp)^\times}(x)\xipDelta(x)
    	\wh{N}_\fra(x)^k
    	d\mu_{\xiDelta}(x)\\
    	&=\sum_{\sigma\in C_\xi}\sum_{J\subset P}\frac{(-1)^{|J|}}{N\frp_J}
    	\sum_{\xi_{\frp_J}\in\bbT^\fra[\frp_J]}\sum_{\e\in\Delta_{\xi_p}\backslash\Delta}
    	\int_{\fra\otimes\bbZ_p}\xi_{\frp_J}(x)\xi^\e_p(x)\wh{N}_\fra(x)^k d\mu_{\sigma,\xi}(x)\\
    	&=N\fra^{-k}\sum_{\sigma\in C_\xi}\sum_{\e\in\Delta_{\xi_p}\backslash\Delta}
    	\sum_{J\subset P}\frac{(-1)^{|J|}}{N\frp_J}
    	\sum_{\xi_{\frp_J}\in\bbT^\fra[\frp_J]}
    	\zeta_\sigma(\xi\xi_{\frp_J}\xi_p^\e, -k).	
	\end{align*}
	Note that for any torsion point $\xi$ of $\bbT^\fra$, we have
	\begin{align*}
		\sum_{J\subset P}\frac{(-1)^{|J|}}{N\frp_J}
		\sum_{\xi_{\frp_J}\in\bbT^\fra[\frp_J]}
		\zeta_\sigma(\xi\xi_{\frp_J}\xi^\e_p, s)
		&=	
		\sum_{\alpha\in\breve\sigma\cap\fra_+}
		\sum_{J\subset P}\frac{(-1)^{|J|}}{N\frp_J}
		\sum_{\xi_{\frp_J}\in\bbT^\fra[\frp_J]}
		\xi_{\frp_J}(\alpha)\frac{\xi(\alpha)\xi_p^\e(\alpha)}{N(\fra^{-1}\alpha)^s}\\
		&=	
		\sum_{\substack{\alpha\in\breve\sigma\cap\fra_+\\\alpha\not\in\frp\fra\,\forall\frp\divides(p)}}
		\frac{\xi(\alpha)\xi_p^\e(\alpha)}{N(\fra^{-1}\alpha)^s}.
	\end{align*}
	Our assertion follows from the above calculations.
\end{proof}

%%%%%%%%%%%%%%%%%%%%%%%%%%%%%%%%%%%%%%%%%%%%%%%%%%
%
\subsection{Construction of the $p$-adic $L$-function}\label{subsection: padicHecke}
%
%%%%%%%%%%%%%%%%%%%%%%%%%%%%%%%%%%%%%%%%%%%%%%%%%%

In this subsection, we prove Theorem \ref{thm: padicL} and then construct the $p$-adic $L$-function 
associated to a Hecke character of the totally real field.
In what follows, we fix a set $\frC\subset\frI$ 
which represents the narrow ideal class group $\Cl^+_F(1)$. 
Recall that
by \eqref{eq: Cl(p^infty)}, we have a bijection 
$
    \coprod_{\fra\in\frC} \ol\Delta\backslash(\fra\otimes\bbZ_p)^\times
    \xrightarrow\cong\Cl^+_F(p^\infty).
$

\begin{proof}[Proof of Theorem \ref{thm: padicL}]
    Let $\chi\colon\Cl_F^+(\frg)\rightarrow\bbC^\times$ be a finite primitive Hecke character 
    of conductor $\frg$, where $\frg$ is an integral ideal of $\cO_F$
    which does not divide any power of $(p)$.
    In other words, $\frg$ is divisible by a prime ideal which is not a factor of $(p)$. 
    We will construct a measure $\mu_\chi$ on $\Cl^+_F(p^\infty)$ 
    which satisfies the interpolation property as in Theorem \ref{thm: padicL}. 
    
    Recall that, for any fractional ideal $\fra$ of $F$, 
    the Hecke character $\chi$ induces the function $\chi_\fra\colon\fra/\frg\fra\rightarrow\bbC$ 
    satisfying $\chi_\fra(\alpha)=\chi(\fra^{-1}\alpha)$ for any $\alpha\in\fra_+$.
    For $\fra\in\frC$, we let $\mu_{\chi_\fra}$ be the measure 
    on $\ol\Delta\setminus(\fra\otimes\bbZ_p)^\times$ given by
    \[
    	\mu_{\chi_\fra}\coloneqq\sum_{\xi\in\bbT_\prim^\fra[\frg]/\Delta}c_\chi(\xi)\mu_{\xiDelta},
    \]
    where $\bbT_\prim^\fra[\frg]$ denotes the set of primitive $\frg$-torsion point 
    in $\bbT^\fra[\frg]$.
    Note that since $\chi$ is primitive, we have 
    $\mu_{\chi_\fra}=\sum_{\xi\in\bbT^\fra[\frg]/\Delta}c_\chi(\xi)\mu_{\xiDelta}$
    by Proposition \ref{prop: Hecke primitive}.
    Then we let $\mu_\chi$ be the measure on $\Cl^+_F(p^\infty)$ induced by $\mu_{\chi_\fra}$
    through the bijection \eqref{eq: Cl(p^infty)}.
    In other words, we let 
    \[
    	\int_{\Cl^+_F(p^\infty)} f(\frx)d\mu_\chi(\frx)\coloneqq
    	\sum_{\fra\in\frC}\int_{\ol\Delta\backslash(\fra\otimes\bbZ_p)^\times}
    	f_\fra(x)d\mu_{\chi_\fra}(x)
    \]
    for any continuous function $f\colon\Cl^+_F(p^\infty)\rightarrow\bbC_p$,
    where we denote by $f_\fra$ the pull-back of $f$ to $\ol\Delta\backslash(\fra\otimes\bbZ_p)^\times$.
    %We denote by $N\colon\Cl_F^+(p^\infty)\rightarrow\bbZ_p^\times$ the norm map
    %given as the projective limit of maps $\Cl_F^+(p^n)\rightarrow\bbZ$ given by
    %mapping any fractional ideal $\fra\in\frI_p$ to $N(\fra)\in(\bbZ/p^n\bbZ)^\times$.

    Let $\chi_p$ be a finite character of $\Cl^+_F(p^\infty)$, 
    and suppose $\chi_p$ factors through $\Cl^+_F(p^n)$. 
	The finite Fourier inversion formula gives the equalites
	\begin{align*}
		\chi_\fra(x) &=\sum_{\xi\in\bbT_\prim^\fra[\frg]/\Delta}
		c_\chi(\xi)\xiDelta(x), &
		\chi_{p,\fra}(x)&=\sum_{\xi_p\in\bbT^\fra[p^n]/\Delta}c_{\chi_p}(\xi_p)\xipDelta(x)
	\end{align*}
	as functions on $\ol\Delta\backslash(\fra\otimes\bbZ_p)$ for any $\fra\in\frC$.
	Then for any $k\in\bbN$, we have
	\begin{align*}
		\int_{\Cl^+_F(p^\infty)} \chi_p(\frx)&\wh{N}(\frx)^k d\mu_\chi(\frx) 
		=\sum_{\fra\in\frC}
		\int_{\ol\Delta\backslash(\fra\otimes\bbZ_p)^\times}
	    \chi_{p,\fra}(x)\wh{N}_\fra(x)^k d\mu_{\chi_\fra}(x)\\
	    &=\sum_{\fra\in\frC}
	    \sum_{\substack{\xi\in\bbT^\fra_\prim[\frg]/\Delta\\\xi_p\in\bbT^\fra[p^n]/\Delta}}
	    c_\chi(\xi)c_{\chi_p}(\xi_p)
	    \int_{\ol\Delta\backslash(\fra\otimes\bbZ_p)^\times}
		\xipDelta(x)\wh{N}_\fra(x)^k d\mu_{\xiDelta}(x)\\
		&=\sum_{\fra\in\frC}
		\sum_{\substack{\xi\in\bbT^\fra_\prim[\frg]/\Delta\\\xi_p\in\bbT^\fra[p^n]/\Delta}}
		c_\chi(\xi)c_{\chi_p}(\xi_p)
	    \cL^\p(\xiDelta\,\xipDelta,-k)\\
	    &=\Biggl(\prod_{\frp\divides(p)}\bigl(1-\chi\!\chi_p(\frp)N\frp^{-k}\bigr)\Biggr)
	    L(\chi\!\chi_p,-k).
	\end{align*}
	The last equality follows from the fact that we have
	\begin{align*}
	\sum_{\fra\in\frC}
	\sum_{\substack{\xi\in\bbT^\fra_\prim[\frg]/\Delta\\\xi_p\in\bbT^\fra[p^n]/\Delta}}
	& c_\chi(\xi)c_{\chi_p}(\xi_p)
	\cL^\p(\xiDelta\,\xipDelta,s)\\
	&=
	\sum_{\fra\in\frC}\sum_{\substack{\xi\in\bbT^\fra_\prim[\frg]/\Delta\\\xi_p\in\bbT^\fra[p^n]/\Delta}}
	\sum_{\substack{\alpha\in\Delta\setminus\fra_+\\\alpha\in(\fra\otimes\bbZ_p)^\times}}
	c_\chi(\xi)c_{\chi_p}(\xi_p)\xiDelta(\alpha)\xipDelta(\alpha)N(\fra^{-1}\alpha)^{-s}
	\\
	&= 
	\sum_{\fra\in\frC}\sum_{\substack{\alpha\in\Delta\setminus\fra_+\\\alpha\in(\fra\otimes\bbZ_p)^\times}}
	\chi(\fra^{-1}\alpha)\chi_p(\fra^{-1}\alpha)N(\fra^{-1}\alpha)^{-s}\\
	&= 
	\sum_{\substack{\fra\subset\cO_F\\(\fra,(p))=1}}
	\chi(\fra)\chi_p(\fra)N\fra^{-s}
	=
	\Biggl( \prod_{\frp\divides(p)}\bigl(1-\chi\!\chi_p(\frp)N\frp^{-s}\bigr)\Biggr)L(\chi\!\chi_p,s)
	\end{align*}
	noting that $\alpha\in(\fra\otimes\bbZ_p)^\times$ implies that the ideal $\fra^{-1}\alpha$
	is prime to $\frp$ for any prime ideal of $F$ such that $\frp\divides(p)$.
\end{proof}

We define the $p$-adic $L$-function as follows.
Note that we have $\bbZ_p^\times=\mu\times(1+\bsp\bbZ_p)$, where $\mu$ is
the maximal torsion subgroup of $\bbZ_p^\times$ and $\bsp=p$ if $p\neq 2$ and $\bsp=p^2$ if $p=2$.
For $x\in\bbZ_p^\times$, we denote the first and second factors by
$\omega_p(x)\in\mu$ and $\langle x\rangle\in 1+\bsp\bbZ_p$.
We denote again by $\omega_p$ the character 
$\omega_p\circ\wh{N}\colon \Cl^+_F(p^\infty)\rightarrow\mu$. 

\begin{definition}\label{def: padicL}
	Let $\chi\colon\Cl^+_F(\frg)\rightarrow\bbC^\times$ be a finite primitive 
	Hecke character of conductor $\frg$, where $\frg$ does not divide any power of $(p)$.
	We define the $p$-adic $L$-function $L_p(\chi,s)$ by
	\[
		L_p(\chi,s)\coloneqq\int_{\Cl_F^+(p^\infty)}
		\omega_p(\frx)^{-1}\langle\wh{N}(\frx)\rangle^{-s} d\mu_\chi(\frx)
	\]
	for any $s\in\bbC_p$.
\end{definition}

As a corollary of Theorem \ref{thm: padicL}, the $p$-adic $L$-function satisfies the following.

\begin{corollary}
	For any integer $k\in \bbN$, we have
	\[
		L_p(\chi,-k) = 
		\Biggl( \prod_{\frp\divides(p)}
		\bigl(1-\chi\omega_p^{-k-1}(\frp)N\frp^{k}\bigr)\Biggr)L(\chi\omega_p^{-k-1},-k).
	\]
\end{corollary}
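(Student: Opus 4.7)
The plan is to deduce this formula directly from Theorem \ref{thm: padicL} by choosing a specific finite character of $\Cl^+_F(p^\infty)$ that converts the Definition \ref{def: padicL} integrand into the one appearing in the interpolation theorem. Concretely, I would set $\chi_p \coloneqq \omega_p^{-k-1}$, viewed as a character of $\Cl^+_F(p^\infty)$ via composition with the norm map $\wh{N}$.

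The central calculation is a pointwise identity on $\Cl^+_F(p^\infty)$. Starting from the decomposition $\bbZ_p^\times = \mu \times (1+\bsp\bbZ_p)$, every $x \in \bbZ_p^\times$ factors as $x = \omega_p(x)\langle x\rangle$, whence $\langle x\rangle^k = \omega_p(x)^{-k} x^k$ and consequently
\[
    \omega_p(x)^{-1}\langle x\rangle^k = \omega_p(x)^{-k-1}\, x^k.
\]
Applying this with $x = \wh{N}(\frx)$ and recalling that by convention $\omega_p = \omega_p \circ \wh{N}$ on $\Cl^+_F(p^\infty)$, the integrand in $L_p(\chi,-k)$ becomes
\[
    \omega_p(\frx)^{-1}\langle\wh{N}(\frx)\rangle^k = \omega_p^{-k-1}(\frx)\,\wh{N}(\frx)^k.
\]

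With this reformulation, $L_p(\chi,-k)$ is precisely the integral appearing in Theorem \ref{thm: padicL} with the finite character $\chi_p = \omega_p^{-k-1}$; note that $\omega_p^{-k-1}$ factors through $\Cl^+_F(p^m)$ for $m$ large enough since $\omega_p$ has finite order, so it is a legitimate choice. Substituting directly yields
\[
    L_p(\chi,-k) = \Biggl(\prod_{\frp\divides(p)}\bigl(1-\chi\omega_p^{-k-1}(\frp)N\frp^{k}\bigr)\Biggr) L(\chi\omega_p^{-k-1},-k),
\]
where I interpret $\chi\omega_p^{-k-1}$ as a primitive Hecke character in the sense of Remark \ref{rem: chi chi_p} (the Euler factors at primes above $p$ are removed, so any ambiguity in the conductor is harmless).

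This proof is essentially a bookkeeping exercise, so there is no genuine obstacle. The only subtlety worth being careful about is the sign and twist in the exponents of $\omega_p$ and the Frobenius eigenvalue: one must track that $\langle x\rangle^{-s}|_{s=-k}$ contributes a factor $x^k$ rather than $x^{-k}$, and that combining this with the $\omega_p^{-1}$ factor shifts the Teichmüller twist from $-1$ to $-k-1$. Once this single identity is verified, the corollary follows immediately from the interpolation formula.
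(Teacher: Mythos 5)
Your proof is correct and follows the only natural route: rewrite $\omega_p(\frx)^{-1}\langle\wh{N}(\frx)\rangle^{k}$ as $\omega_p^{-k-1}(\frx)\,\wh{N}(\frx)^{k}$ via $x=\omega_p(x)\langle x\rangle$, then apply Theorem~\ref{thm: padicL} with $\chi_p=\omega_p^{-k-1}$; the paper states the corollary without an explicit argument, so this is evidently the intended deduction. One small point worth flagging: the interpolation formula in the \emph{statement} of Theorem~\ref{thm: padicL} carries a typo (it reads $N\frp^{-k}$, whereas the concluding identity in its proof, specialized at $s=-k$, gives $N\frp^{k}$); your final formula uses the correct exponent $N\frp^{k}$, so you have implicitly corrected this, but you should be explicit that you are substituting into the corrected form of the theorem rather than into its literal statement.
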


%%%%%%%%%%%%%%%%%%%%%%%%%%%%%%%%%%%%%%%%%%%%%%%%%%
%
%
%
\section{Proof of the Main Theorem}\label{section: main theorem}
%
%
%
%%%%%%%%%%%%%%%%%%%%%%%%%%%%%%%%%%%%%%%%%%%%%%%%%%

In this section, we will prove our main theorem.  More precisely, we will calculate the values of the $p$-adic polylogarithm
at torsion points of $\wh U_K$ and relate it to the special values of $p$-adic Hecke $L$-functions.

%%%%%%%%%%%%%%%%%%%%%%%%%%%%%%%%%%%%%%%%%%%%%%%%%%
%
\subsection{$p$-adic Polylogarithms and Specializations at Torsion Points}
%
%%%%%%%%%%%%%%%%%%%%%%%%%%%%%%%%%%%%%%%%%%%%%%%%%%

In this subsection, we will define the $p$-adic polylogarithm 
$\Li^\p_k(t)\in H^{g-1}\bigl(\wh U_K/F^\times_+,\sO_{\wh\bbT_K}\bigr)$
for $k\in\bbZ$ and calculate its specializations at torsion points.

For any $k\in\bbZ$, let $k^I\coloneqq(k,\ldots,k)\in\bbZ^I$.  Then we have the following.

\begin{lemma}\label{lem: twist}
	For any $k\in\bbZ$ and $\fra\in\frI$, consider the multiplication by $N\fra^k$ 
	on $\sO_{\wh{\bbT}^\fra_K}$, regarded as a homomorphism 
	$\sO_{\wh{\bbT}^\fra_K}(k^I)\to\sO_{\wh{\bbT}^\fra_K}$ 
	of sheaves on $\wh{\bbT}^\fra_K$. 
    Then the collection for all $\fra\in\frI$ gives an isomorphism 
	\begin{equation}\label{eq: isom45}
		\sO_{\wh\bbT_K}(k^I)\isomto\sO_{\wh\bbT_K}
	\end{equation}
	of $F^\times_+$-equivariant sheaves on $\wh\bbT_K=\amalg_{\fra\in\frI}\wh\bbT_K^\fra$.
\end{lemma}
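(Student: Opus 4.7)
The plan is to observe that the lemma is essentially a bookkeeping check: on each component $\wh{\bbT}^\fra_K$ of $\wh{\bbT}_K$, both sheaves $\sO_{\wh{\bbT}_K}(k^I)$ and $\sO_{\wh{\bbT}_K}$ coincide with the structure sheaf as $\sO$-modules, so the proposed map---multiplication by the nonzero scalar $N\fra^k \in K^\times$---is patently an isomorphism of $\sO$-modules. The entire content of the lemma therefore reduces to verifying that the collection of these scalar multiplications is compatible with the two different $F_+^\times$-equivariant structures.

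To carry this out, I would fix $x \in F_+^\times$ and $\fra \in \frI$, and unravel what equivariance demands on the component $\wh{\bbT}^{x\fra}_K$. By Definition \ref{def: twist}, the equivariant structure on $\sO_{\wh{\bbT}_K}(k^I)$ is $\iota_x = $ multiplication by $x^{-k^I} = \prod_{\tau \in I}(x^\tau)^{-k} = N(x)^{-k}$, whereas on $\sO_{\wh{\bbT}_K}$ the structure map is the identity. The pullback of the scalar multiplication $N\fra^k$ along $\bra{x}\colon \wh{\bbT}^{x\fra}_K \to \wh{\bbT}^\fra_K$ is again the scalar $N\fra^k$ (scalars are preserved by pullback). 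Hence equivariance of the proposed map amounts to the commutativity of
\[
\xymatrix{
\bra{x}^*\sO_{\wh{\bbT}^\fra_K}(k^I) \ar[r]^-{N\fra^k} \ar[d]_-{N(x)^{-k}} & \bra{x}^*\sO_{\wh{\bbT}^\fra_K} \ar[d]^-{\id} \\
\sO_{\wh{\bbT}^{x\fra}_K}(k^I) \ar[r]_-{N(x\fra)^k} & \sO_{\wh{\bbT}^{x\fra}_K},
}
\]
which in turn reduces to the scalar identity $N(x\fra)^k = N(x)^k \cdot N\fra^k$.

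This last identity is the multiplicativity of the ideal norm: since $x \in F_+^\times$ is totally positive, $N_{F/\bbQ}(x) > 0$ and $N(x\fra) = N(x)\, N\fra$ as a norm of fractional ideals. The cocycle condition $\iota_e = \id$ and compatibility $\iota_{xy} = \iota_x \circ \bra{x}^*\iota_y$ on both sides follow automatically because they are encoded by the already-verified equivariant structures on the source and target. I do not anticipate any genuine obstacle here; the lemma is really the statement that the twisting line bundle $\sO(k^I)$ on the stack $\sU = U/F_+^\times$ becomes canonically trivial once one unwinds the action using the norm character, and this trivialization is given component-wise by the scalar $N\fra^k$.
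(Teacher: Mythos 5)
Your proof is correct and takes essentially the same route as the paper: both reduce the claim to the commutativity of the same diagram of sheaves on $\wh\bbT_K^{x\fra}$, and both arguments boil down to the scalar identity $N(x\fra)^k = N(x)^k N(\fra)^k$, which is multiplicativity of the ideal norm (valid here since $x$ is totally positive). The only difference is that you spell out the reduction from the equivariance condition to this identity, which the paper leaves implicit.
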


\begin{proof}
	For any $\fra\in\frI$ and $x\in F^\times_+$, we have a commutative diagram
	\[
		\xymatrix{
		\bra{x}^*\sO_{\wh\bbT_K^{\fra}}(k^I)	 \ar[rr]^{N(\fra)^{k}}_\cong\ar[d]_{N(x)^{-k}}^\cong	
			&  &\bra{x}^*\sO_{\wh\bbT_K^{\fra}}\ar[d]_{\id}^\cong\\
			\sO_{\wh\bbT_K^{x\fra}}(k^I)	 \ar[rr]^{N(x\fra)^{k}}_\cong	& &\sO_{\wh\bbT_K^{x\fra}},
		}
	\]
	of sheaves on $\wh\bbT_K^{x\fra}$, which gives our assertion.
\end{proof}

%Using the isomorphism of Lemma \ref{lem: twist}, we may define 
%the $p$-adic polylogarithm $\Li^\p_{k}(t)$ in $H^{g-1}(\wh U_K/F^\times_+,\sO_{\wh\bbT_K})$
%for any integer $k\in\bbZ$ as follows.

\begin{definition}\label{def: k}
	For any $k\in\bbZ$, we define the $p$-adic polylogarithm $\Li^\p_{k}(t)$ to be the
	class in $H^{g-1}\bigl(\wh U_K/F^\times_+,\sO_{\wh\bbT_K}\bigr)$
	corresponding to the class $\Li^\p_{k^I}(t)$ through the isomorphism
	\[
		H^{g-1}\bigl(\wh U_K/F^\times_+,\sO_{\wh\bbT_K}\bigl(k^I\bigr)\bigr)
		\cong H^{g-1}\bigl(\wh U_K/F^\times_+,\sO_{\wh\bbT_K}\bigr)
	\]
	induced from the isomorphism \eqref{eq: isom45} of Lemma \ref{lem: twist}.	
	By construction, $\Li^\p_{k}(t)$ is given by the cocycle
	\begin{equation*}%\label{eq: modification}
	\Bigl(\sgn(\bsalpha)N\fra^k\Li^\p_{k^I,\sigma_\bsalpha}(t)\Bigr)
	\in
	\Biggl(\prod_{\fra\in\frI}\prod^\alt_{\bsalpha\in\sA_\fra^g}
	\Gamma\bigl(\wh U^\fra_{\bsalpha K},\sO_{\wh\bbT^\fra_K}\bigr)\Biggr)^{F^\times_+}.
	\end{equation*}
\end{definition}

Let $\xi\in\bbT(\ol\bbQ)$ be a torsion point lying in $\wh U_K$, and regard it as the rigid analytic space $\Sp K(\xi)$ with trivial action of $\Delta_\xi$.
Furthermore, for any cone $\sigma=\sigma_\bsalpha$ of dimension $g$ 
with $\bsalpha\in\sA_\fra^g$, we let $\wh U_{\sigma K}^\fra\coloneqq \wh U^\fra_{\bsalpha K}$.
We fix a Shintani decomposition $\Phi_{\xi}$ as in Lemma \ref{lem: SD} so that 
$\xi$ lies in $\wh U^\fra_{\sigma K}$ for any $\sigma\in\Phi_{\xi}$.
For any $\alpha\in\sA_\fra$, we let $V_\alpha=\wh U^\fra_\alpha\cap\xi$.
Then $\frV_\fra\coloneqq\{V_\alpha\}_{\alpha\in\sA_\fra}$ gives an affinoid covering of $\xi$.
We denote by $\sA_\xi$ the subset of $\sA_\fra$ consisting of $\alpha$ 
such that $\xi$ lies in $\wh U^\fra_{\alpha K}$.
Then the following lemma is proved by the same argument as in \cite{BHY19}*{Proposition 5.5}.

\begin{lemma}\label{lem: isomorphism}
	Let $\eta\in H^{g-1}(\xi/\Delta_\xi,\sO_\xi)$ be represented by a cocycle 
	\[
		 (\eta_\bsalpha)\in C^{g-1}(\frV_\fra/\Delta_\xi,\sO_\xi)
		=\biggl(\prod_{\bsalpha\in \sA_\xi^{g}}^\alt
		K(\xi)\biggr)^{\Delta_\xi}.
	\]	
	For any cone $\sigma\in\Phi_{\xi}$, let 
	$\eta_\sigma\coloneqq \sgn(\bsalpha)\eta_{\bsalpha}$ for a generator 
	$\bsalpha\in A^g_\xi$ of $\sigma$. 
	Then the homomorphism mapping the cocycle $(\eta_\bsalpha)$ to
	$
		\sum_{\sigma\in\Delta_\xi\backslash\Phi_{\xi}} \eta_{\sigma}
	$
	induces a canonical isomorphism
	\begin{equation}\label{eq: isomorphism}
	H^{g-1}(\xi/\Delta_\xi, \sO_\xi)\cong K(\xi).
	\end{equation}
\end{lemma}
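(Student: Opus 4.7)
The plan is to adapt the strategy of \cite{BHY19}*{Proposition 5.5} to the present setting. A guiding observation is that $\Delta_\xi$, being a finite-index subgroup of $\Delta = (\cO_F^\times)_+$, is free abelian of rank $g-1$ by Dirichlet's unit theorem; combined with the fact that $\xi = \Sp K(\xi)$ is an affinoid point whose structure sheaf has trivial higher cohomology, one expects the source $H^{g-1}(\xi/\Delta_\xi,\sO_\xi)$ to coincide with the top group cohomology $H^{g-1}(\Delta_\xi, K(\xi))$, which is isomorphic to $K(\xi)$.

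First I would verify that the recipe $(\eta_\bsalpha) \mapsto \sum_{\sigma \in \Delta_\xi\backslash\Phi_\xi} \eta_\sigma$ is well-defined on cochains. For each $g$-dimensional simplicial cone $\sigma$, a generator $\bsalpha \in \sA_\xi^g$ is unique up to permutation, and the factor $\sgn(\bsalpha)$ compensates for reorderings precisely because $(\eta_\bsalpha)$ is alternating; hence $\eta_\sigma \coloneqq \sgn(\bsalpha)\eta_\bsalpha$ depends only on $\sigma$. The $\Delta_\xi$-invariance of $(\eta_\bsalpha)$ yields $\eta_{\e\sigma} = \eta_\sigma$ for all $\e \in \Delta_\xi$, so the sum over $\Delta_\xi\backslash\Phi_\xi$ is independent of the choice of orbit representatives.

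The second step is to show that the map descends to cohomology. For $\eta = d\eta'$ with $\eta' \in C^{g-2}(\frV_\fra/\Delta_\xi,\sO_\xi)$, evaluating $d\eta'$ on the generators of cones in $\Phi_\xi$ and summing over $\Delta_\xi\backslash\Phi_\xi$ leads to a cancellation based on the linear relation of characteristic functions from \cite{Yam10}*{Proposition 6.2}, combined with the partition $\bbR_+^I = \coprod_{\sigma\in\Phi_\xi}\breve\sigma$ provided by the Shintani decomposition. This is precisely the same combinatorial identity that underlies the proof of Theorem \ref{thm: polylog} above.

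Finally, to show bijectivity, I would establish surjectivity by exhibiting, for any $c \in K(\xi)$, an explicit cocycle: fix $\sigma_0 \in \Phi_\xi$ with generator $\bsalpha_0 \in \sA_\xi^g$, set $\eta_{\bsalpha_0} = \sgn(\bsalpha_0) c$, extend by the alternating and $\Delta_\xi$-equivariant properties, and set the remaining components to zero; verifying the cocycle condition once more invokes the Yamamoto identity applied to $(g+1)$-tuples. Injectivity and canonicity (independence of $\Phi_\xi$) then follow by comparison with the group cohomology $H^{g-1}(\Delta_\xi,K(\xi))\cong K(\xi)$ noted above, or more directly by constructing a primitive $\eta'$ for any cocycle in the kernel via a telescoping argument across $\Phi_\xi$. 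The principal obstacle is the coboundary-vanishing step, where the interplay between the Čech alternating differential, the $\sgn(\bsalpha)$ factors, and the $\Delta_\xi$-orbit decomposition of $\Phi_\xi$ must be tracked carefully so that all combinatorial contributions cancel.
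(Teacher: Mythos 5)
Your overall architecture is sound, and in particular the opening observation is the right backbone: since $\xi=\Sp K(\xi)$ is a point with no higher sheaf cohomology and the covering $\frV_\fra$ has $\sA_\xi$ acted on freely by $\Delta_\xi$, the Čech complex is a complex of coinduced $\Delta_\xi$-modules resolving $K(\xi)$, so $H^{g-1}(\xi/\Delta_\xi,\sO_\xi)\cong H^{g-1}(\Delta_\xi,K(\xi))$, and the latter is $\cong K(\xi)$ because $\Delta_\xi\cong\bbZ^{g-1}$ acts trivially. The paper defers the details to \cite{BHY19}*{Proposition 5.5}, and this identification is indeed the heart of that proof.

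However, your coboundary-vanishing step has a genuine gap. You invoke \cite{Yam10}*{Proposition 6.2}, but that result states a linear relation among the characteristic functions $1_{\breve\sigma_{\alpha_0,\ldots,\breve\alpha_j,\ldots,\alpha_g}}$ obtained from a \emph{single} $(g+1)$-tuple $(\alpha_0,\ldots,\alpha_g)$; it is the ingredient for showing that a collection indexed by $\bsalpha\in\sA_\fra^g$ is a \emph{cocycle} (as in the proof of Theorem \ref{thm: polylog}). What you need here is dual: that for any $\Delta_\xi$-invariant alternating $(\eta'_{\bsalpha'})$ in degree $g-2$, the sum $\sum_{\sigma\in\Delta_\xi\backslash\Phi_\xi}\sgn(\bsalpha_\sigma)(d\eta')_{\bsalpha_\sigma}$ vanishes. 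Reorganizing by $(g-1)$-tuples, this requires that each $(g-1)$-face appearing in $\Phi_\xi$ is met by exactly two cones with opposite induced orientations (faces on the boundary of a fundamental domain being paired via $\Delta_\xi$-translates). In other words, the formal chain $\sum_{\sigma\in\Delta_\xi\backslash\Phi_\xi}\sgn(\bsalpha_\sigma)[\bsalpha_\sigma]$ must be a \emph{cycle}; this is a closedness property of the Shintani fan modulo $\Delta_\xi$, and is not a consequence of \cite{Yam10}*{Proposition 6.2}. You correctly flag this step as the principal obstacle, but the cited identity does not resolve it.

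A secondary issue: your surjectivity construction, which sets $\eta_{\bsalpha_0}=\sgn(\bsalpha_0)c$ and all other components to zero (then extends by alternation and $\Delta_\xi$-equivariance), does not in general produce a cocycle; the cocycle condition forces nonzero values on other tuples. It is cleaner to derive both injectivity and surjectivity from the abstract isomorphism $H^{g-1}(\xi/\Delta_\xi,\sO_\xi)\cong H^{g-1}(\Delta_\xi,K(\xi))\cong K(\xi)$ that you already identified, together with the observation that the cycle $\sum_{\sigma}\sgn(\bsalpha_\sigma)[\bsalpha_\sigma]$ represents a generator of $H_{g-1}(\Delta_\xi,\bbZ)\cong\bbZ$ — this nondegeneracy is exactly what makes the explicit evaluation map an isomorphism rather than merely a well-defined map.
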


Now we may define the value of the $p$-adic polylogarithm $\Li^\p_k(t)$ at $\xi$ as follows.

\begin{definition}
	Let $\xi$ be a torsion point lying in $\wh U_K$.
	For any $k\in\bbZ$,
	we denote by $\Li^\p_k(\xi)$ the image of $\Li^\p_k(t)$ with respect
	to the specialization map 
	\[
		H^{g-1}\bigl(\wh U_K/F_+^\times,\sO_{\wh\bbT_K}\bigr)
		\rightarrow H^{g-1}(\xi/\Delta_\xi,\sO_\xi)
	\]
	induced from the equivariant morphism $\xi\rightarrow \wh U_K$.
	We view $\Li^\p_k(\xi)$ as an element in $K(\xi)$ through 
	the isomorphism \eqref{eq: isomorphism},
	and call it the \textit{value} of the polylogarithm at the point $\xi$.
\end{definition}

The following result is crucial in the proof of our main theorem.

\begin{theorem}\label{thm: crucial}
	Let $\xi$ be a torsion point lying in $\wh U^\fra_K$ for $\fra\in\frI$.
	Then we have
	\[
		\Li^\p_{k}(\xi)=\int_{\ol\Delta\backslash(\fra\otimes\bbZ_p)^\times}
		\wh{N}_\fra(x)^{-k}d\mu_{\xiDelta}(x)
	\]
	for any integer $k\in\bbZ$.
\end{theorem}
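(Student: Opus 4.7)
The strategy is to unravel both sides via explicit cocycle representatives, reduce to a per-cone identity for each cone in a Shintani decomposition, and then establish that identity using $p$-adic interpolation together with a continuity argument in the exponent.

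Fix a Shintani decomposition $\Phi_\xi$ as in Lemma \ref{lem: SD}, so that $\xi$ lies in $\wh U^\fra_{\bsalpha K}$ for every generator $\bsalpha\in\sA_\fra^g$ of a cone $\sigma\in\Phi_\xi$. By Lemma \ref{lem: isomorphism} applied to the cocycle representative of $\Li^\p_k(t)$ in Definition \ref{def: k}, the left-hand side of the theorem unpacks as
\begin{equation*}
	\Li^\p_k(\xi) = \sum_{\sigma\in\Delta_\xi\backslash\Phi_\xi} N\fra^k\,\Li^\p_{k^I,\sigma}(\xi)\in K(\xi),
\end{equation*}
where each $\Li^\p_{k^I,\sigma}(\xi)$ is the specialization at $\xi$ of the rigid analytic function of Proposition \ref{prop: continuation}. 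Dually, the decomposition $\mu_{\xiDelta}=\sum_{\sigma\in\Delta_\xi\backslash\Phi_\xi}\ol\mu_{\sigma,\xi}$ from the proof of Theorem \ref{thm: interpolation} rewrites the right-hand side as $\sum_\sigma\int_{(\fra\otimes\bbZ_p)^\times}\wh N_\fra(x)^{-k}\,d\mu_{\sigma,\xi}(x)$. It thus suffices to prove, for each $\sigma\in\Phi_\xi$, the per-cone identity
\begin{equation*}
	N\fra^k\,\Li^\p_{k^I,\sigma}(\xi) = \int_{(\fra\otimes\bbZ_p)^\times}\wh N_\fra(x)^{-k}\,d\mu_{\sigma,\xi}(x).
\end{equation*}

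I would prove this identity by approximating $\wh N_\fra(x)^{-k}$ uniformly on $(\fra\otimes\bbZ_p)^\times$ by polynomial functions handled by Proposition \ref{prop: interpolation}. Since $\wh N_\fra$ takes values in $\bbZ_p^\times$ there, Euler's congruence $y^{p^{m-1}(p-1)}\equiv 1\pmod{p^m}$ for $y\in\bbZ_p^\times$ yields the uniform approximation $\wh N_\fra(x)^{-k}\equiv \wh N_\fra(x)^{M_m}=N\fra^{-M_m}x^{M_m^I}\pmod{p^m}$, with $M_m\coloneqq p^{m-1}(p-1)-k\in\bbN$ for $m$ large. Expanding $\boldsymbol{1}_{(\fra\otimes\bbZ_p)^\times}$ as a finite Fourier sum of characters $\xi_{\frp_J}$ with $J\subset P$, applying Proposition \ref{prop: interpolation} termwise to each integral of $\xi_{\frp_J}(x)\,x^{M_m^I}$ against $\mu_{\sigma,\xi}$, and summing over $J$ identify the resulting sum with the $p$-modified Shintani zeta value $\zeta_\sigma^{(p)}(\xi,-M_m^I)$. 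By Theorem \ref{thm: generate} and iterated application of Proposition \ref{prop: diff eq} to the identity $\Li^\p_{0^I,\sigma}(t)=\cG^{\fra,(p)}_\sigma(t)$, this equals $\Li^\p_{-M_m^I,\sigma}(\xi)$. Therefore, the right-hand side of the per-cone identity equals $\lim_{m\to\infty}N\fra^{-M_m}\,\Li^\p_{-M_m^I,\sigma}(\xi)$.

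The remaining task is the compatibility $\lim_m N\fra^{-p^{m-1}(p-1)}\,\Li^\p_{-M_m^I,\sigma}(\xi) = \Li^\p_{k^I,\sigma}(\xi)$, expressing $p$-adic continuity of the polylog value at $\xi$ in the exponent $\bsk$. This continuity is built into Lemma \ref{lem: polylogarithm}: for fixed $u\in(\fra^{-1}\otimes\bbZ_p)^\times$, the rescaled quantity $u^{-\bsk}\Li^\p_{\bsk,\sigma}(t)\in\wh B_\bsalpha$ is realized as a $p$-adic limit of elements whose defining coefficients $(u\alpha)^{-\bsk}\in\cO_K^\times$ vary continuously in $\bsk\in\bbZ_p^I$, so evaluation at $\xi$ yields a continuous function $\bsk\mapsto u^{-\bsk}\Li^\p_{\bsk,\sigma}(\xi)$.

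The main obstacle is controlling the auxiliary factors $N\fra^{-p^{m-1}(p-1)}$ and $N(u)^{p^{m-1}(p-1)}$ that emerge in this comparison: both collapse to $1$ precisely when $\fra$ is coprime to $p$. For general $\fra$, I would exploit the $F_+^\times$-equivariance of both sides of the asserted identity --- built into Definition \ref{def: k} on the left, and following from Remark \ref{rem: mu_sigma,xi} on the pushforward behavior of the measures $\mu_{\sigma,\xi}$ on the right --- to replace $(\fra,\xi)$ by an $F_+^\times$-translate $(\fra',\xi')$ with $\fra'$ coprime to $p$, thereby reducing to the case just handled.
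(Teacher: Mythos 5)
Your proof takes essentially the same route as the paper's: unpack the cohomology class via Lemma \ref{lem: isomorphism} and the cocycle representative in Definition \ref{def: k}, decompose $\mu_{\xiDelta}$ over $\Delta_\xi\backslash\Phi_\xi$, and reduce to a per-cone identity (which is precisely Proposition \ref{prop: crucial} of the paper). The per-cone identity is then established via the formal expansion of the polylogarithm, the interpolation property of $\mu_{\sigma,\xi}$, the identity $\Li^\p_{0^I,\sigma}(t)=\cG^{\fra,\p}_\sigma(t)$, and an Euler-congruence limit argument. The only presentational difference is that the paper first proves the per-cone identity for $k\le 0$ directly and then passes to $k>0$ by a limit, whereas you run the limit argument uniformly from the start.

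Two points in your final step deserve attention. First, the statement that the coefficients $(u\alpha)^{-\bsk}\in\cO_K^\times$ ``vary continuously in $\bsk\in\bbZ_p^I$'' is stronger than what holds: for an arbitrary $y\in\cO_K^\times$ the map $n\mapsto y^n$ need not extend continuously to $\bbZ_p$, since the torsion part of $y$ may have order not dividing $p-1$. What is true, and what your argument actually uses, is convergence along the specific Euler sequence $\bsk_m=(k-p^{m-1}(p-1))^I$ and for diagonal exponents: then $(u\alpha)^{-\bsk_m}=N(u\alpha)^{-k}\cdot N(u\alpha)^{p^{m-1}(p-1)}\to N(u\alpha)^{-k}$ because $N(u\alpha)\in\bbZ_p^\times$ (the norm of an element of $(\cO_F\otimes\bbZ_p)^\times$). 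It is the norm, not the individual conjugates, that lands in $\bbZ_p^\times$.

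Second, your ``main obstacle'' is in fact already resolved by the normalization of Lemma \ref{lem: polylogarithm}, without any equivariance reduction. There is a sign slip: the second auxiliary factor is $N(u)^{-p^{m-1}(p-1)}$, not $N(u)^{p^{m-1}(p-1)}$, coming from $N(u)^{-M_m}=N(u)^{k}N(u)^{-p^{m-1}(p-1)}$. The two problematic factors then combine as
\[
	N\fra^{-p^{m-1}(p-1)}\cdot N(u)^{-p^{m-1}(p-1)}=\bigl(N\fra\cdot N(u)\bigr)^{-p^{m-1}(p-1)},
\]
and $N\fra\cdot N(u)\in\bbZ_p^\times$ precisely because $u\in(\fra^{-1}\otimes\bbZ_p)^\times$ forces $v_p(N(u))=-v_p(N\fra)$. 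Hence the combined factor tends to $1$ by Euler's congruence, for any $\fra\in\frI$, and the $F_+^\times$-equivariance detour is unnecessary. (That said, the detour you propose is also valid, and one should note that the paper's own Proposition \ref{prop: crucial} states the approximation $N(x)^{-k}=\lim_r N(x)^{-k+(p-1)p^r}$ uniformly on $(\fra\otimes\bbZ_p)^\times$, which as written requires $N\fra\in\bbZ_p^\times$; so the care you are exercising here is warranted, and the resolution is the factor-cancellation just described rather than any obstruction.)

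Subject to these two corrections, your proof is sound.
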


To prove Theorem \ref{thm: crucial}, we need a formula for the value of the $p$-adic 
polylogarithm function $\Li^\p_{k^I,\sigma}(t)$ at $\xi$ for each cone $\sigma\in\Phi_\xi$, 
which is shown by using the technique of \cite{CdS88}*{\textbf{5.6.} Lemma}.
In what follows, we note that 
the multiplication by a torsion point $\xi\in\bbT^\fra(\ol\bbQ)$ induces a morphism 
$\varrho_\xi\colon\wh\bbT^\fra_{K(\xi)}\rightarrow\wh\bbT^\fra_{K(\xi)}$ 
given by $\varrho_\xi^*(t^\alpha)=\xi(\alpha)t^\alpha$.

\begin{proposition}\label{prop: crucial}
	Let $\fra\in\frI$, $\bsalpha\in\sA_\fra^g$, and $\sigma=\sigma_\bsalpha$.
	For any torsion point $\xi$ lying in $\wh U^\fra_{\sigma K}$ and integer $k\in\bbZ$,
	we have
	\[
		\Li^\p_{k^I,\sigma}(\xi)
		=\int_{(\fra\otimes\bbZ_p)^\times}
		N(x)^{-k}d\mu_{\sigma,\xi}(x),
	\]
	where $N(x)$ for $x\in(\fra\otimes\bbZ_p)^\times$ is defined as in \eqref{eq: N(x)}.
\end{proposition}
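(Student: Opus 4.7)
My strategy is to reduce both sides to sums of Shintani zeta values using the Fourier expansion of the characteristic function
\[
\boldsymbol{1}_{(\fra\otimes\bbZ_p)^\times}(x) = \sum_{J \subset P}\frac{(-1)^{|J|}}{N\frp_J}\sum_{\xi_{\frp_J}\in\bbT^\fra[\frp_J]}\xi_{\frp_J}(x),
\]
treat the case $k\leq 0$ by a direct calculation using the interpolation property of $\mu_{\sigma,\xi}$ combined with Theorem~\ref{thm: generate}, and then pass to the remaining case $k>0$ by a $p$-adic continuity argument in the parameter $k$.

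For $k\leq 0$, set $m=-k\geq 0$, so that $N(\alpha)^{-k}=\prod_\tau(\alpha^\tau)^m$ is polynomial in $\alpha$ and corresponds to the action of $\partial^{m\cdot 1^I}=\prod_\tau\partial_\tau^m$ on $t^\alpha$. Applying the Fourier decomposition, the formal series expression for $\Li^\p_{k^I,\sigma}(t)$ in $\wh{B}_\bsalpha\otimes K$ becomes
\[
\Li^\p_{k^I,\sigma}(t)=\sum_J \frac{(-1)^{|J|}}{N\frp_J}\sum_{\xi_{\frp_J}} \partial^{m\cdot 1^I}\varrho_{\xi_{\frp_J}}^{*}\cG^\fra_\sigma(t)
\]
as rigid analytic functions; evaluating at $\xi$ and using that $\partial_\tau$ commutes with $\varrho_{\xi_{\frp_J}}^{*}$, Theorem~\ref{thm: generate} yields the value $\sum_J(-1)^{|J|}N\frp_J^{-1}\sum_{\xi_{\frp_J}}\zeta_\sigma(\xi\xi_{\frp_J},k\cdot 1^I)$. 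On the other side, because $N(x)^{-k}=x^{m\cdot 1^I}$ is a genuine polynomial, the same Fourier expansion reduces the integral to
\[
\sum_J \frac{(-1)^{|J|}}{N\frp_J}\sum_{\xi_{\frp_J}}\int_{\fra\otimes\bbZ_p}\xi_{\frp_J}(x)\,x^{m\cdot 1^I}\,d\mu_{\sigma,\xi}(x),
\]
and each integral equals $\zeta_\sigma(\xi\xi_{\frp_J},k\cdot 1^I)$ by the interpolation property in Proposition~\ref{prop: interpolation}. Hence the two sides agree for all $k\leq 0$.

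For general $k\in\bbZ$, the plan is to extend by $p$-adic continuity. Fix a residue class $k_0$ modulo the order $|\mu(K)|$ of the roots of unity in $K$, and restrict $k$ to that class. Using the Teichm\"uller decomposition $N(\alpha)^{-k}=\omega(N(\alpha))^{-k}\langle N(\alpha)\rangle^{-k}$ on $(\fra\otimes\bbZ_p)^\times$, both sides are $p$-adically continuous in $k$: the right-hand side since $x\mapsto N(x)^{-k}$ is uniformly continuous in $k$ on the compact set $(\fra\otimes\bbZ_p)^\times$, and the left-hand side by revisiting the approximation from Lemma~\ref{lem: polylogarithm}, where the approximating functions $f_m(t)$ converge to $\Li^\p_{k^I,\sigma}(t)$ in $\wh{B}_\bsalpha\otimes K$ uniformly in $k$ (the $p$-adic decay of $f_{m+1}-f_m$ is independent of $k$), so that their evaluations at $\xi$ vary continuously in $k$. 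Since $\bbZ_{\leq 0}$ is dense in the $p$-adic closure of $k_0+|\mu(K)|\bbZ$, agreement on $\bbZ_{\leq 0}$ from the previous step propagates to all $k\in\bbZ$. The main obstacle I expect is the careful verification of the $p$-adic continuity of the left-hand side in $k$; one must check that the explicit formula for $\Li^\p_{k^I,\sigma}$ as a limit in $\wh{B}_\bsalpha\otimes K$ behaves uniformly well as $k$ varies, which is exactly the situation addressed by the technique of \cite{CdS88}*{\textbf{5.6.} Lemma}.
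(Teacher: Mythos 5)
Your proposal is correct and follows essentially the same two-step structure as the paper's proof: finite Fourier expansion of the indicator of $(\fra\otimes\bbZ_p)^\times$ to reduce to Shintani zeta values for $k\leq 0$ (via Theorem \ref{thm: generate} and Proposition \ref{prop: interpolation}), then $p$-adic continuity in $k$ to reach $k>0$. The paper realizes the continuity step concretely by the sequence $k-(p-1)p^r$, whereas you phrase it via the Teichm\"uller decomposition and the uniform-in-$k$ approximation from Lemma \ref{lem: polylogarithm}; these are the same idea, and your remark about uniformity of $f_m\to u^{-\bsk}\Li^\p_{\bsk,\sigma}$ is exactly the point the paper leaves implicit.
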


\begin{proof}
	Let $P\coloneqq\{\frp\in\Spec\cO_F\mid\frp\divides(p)\}$ be the
	set of prime ideals of $\cO_F$ dividing $(p)$ and let
	$\frp_J\coloneqq\prod_{\frp\in J}\frp$ for any $J\subset P$.
	We first prove that as functions in $\Gamma(\wh U^\fra_{\bsalpha K(\xi)}, \sO_{\wh\bbT^\fra_{K(\xi)}})$, we have
	\begin{equation}\label{eq: equality 44}
		\Li^\p_{0^I,\sigma}(t)
		=\cG^{\fra,\p}_\sigma(t)
		\coloneqq \sum_{J\subset P}\frac{(-1)^{|J|}}{N\frp_J}
		\sum_{\xi_{\frp_J}\in\bbT^\fra[\frp_J]}
		\varrho_{\xi_{\frp_J}}^*\cG^\fra_\sigma(t)
	\end{equation}
	Note that both sides of \eqref{eq: equality 44} lie in 
	$\wh B_\bsalpha\otimes K(\xi)\subset\Gamma(\wh U^\fra_{\bsalpha K(\xi)}, \sO_{\wh\bbT^\fra_{K(\xi)}})$, 
	where $B_\bsalpha$ is as in \eqref{eq: B alpha}.
	If we view the functions through the natural injection
	$
		\wh B_\bsalpha\otimes K(\xi)
		\hookrightarrow K(\xi)\llbracket t^\alpha\mid\alpha\in\fra_+\rrbracket,
	$
	then we have
	\begin{align*}
		\Li^\p_{0^I,\sigma}(t)
		&=\sum_{\substack{\alpha\in\breve\sigma\cap\fra\\\alpha\in(\fra\otimes\bbZ_p)^\times}}t^\alpha,&
		\varrho_{\xi_\frp}^*\cG^\fra_{\sigma}(t)&
		=\sum_{\alpha\in\breve\sigma\cap\fra}\xi_\frp(\alpha)t^\alpha
	\end{align*}	
	for any $\xi_\frp\in\bbT^\fra[\frp]$.
	Then \eqref{eq: equality 44} follows from the fact that 
	$\sum_{\xi_\frp\in\bbT^\fra[\frp]}\xi_\frp(\alpha)=N\frp$ if 
	$\alpha\in\frp$ and $\sum_{\xi_\frp\in\bbT^\fra[\frp]}\xi_\frp(\alpha)=0$ otherwise.
	For any integer $n\geq0$, if we apply the differential operator 
	$\partial\coloneqq\prod_{\tau\in I}\partial_\tau$ $n$ times to \eqref{eq: equality 44}, 
	and evaluate both sides at $\xi$, then we have
	\[
		\Li^\p_{-n^I,\sigma}(\xi)=\partial^n\cG^{\fra,\p}_\sigma(\xi) 
		= \partial^n\wt\cG^{\fra,\p}_{\sigma,\xi}(T)\big|_{T=0}
		=\int_{(\fra\otimes\bbZ_p)^\times}N(x)^n d\mu_{\sigma,\xi}(x),
	\]
	where the last equality follows from the construction of the measure $\mu_{\sigma,\xi}$.
	This is our assertion for integers $k\leq 0$. 
	
	Next let $k$ be a positive integer. 
	Recall the factorization $N(x)=N\fra\cdot\widehat{N}_\fra(x)$ (see \eqref{eq: N_a}). 
	Note that $\widehat{N}_\fra(x)\in\bbZ_p^\times$ for $x\in(\fra\otimes\bbZ_p)^\times$. 
	Hence we have 
	\[
		\widehat{N}_\fra(x)^{-k} = \lim_{r\rightarrow\infty}\widehat{N}_\fra(x)^{-k+(p-1)p^r}
	\]
	uniformly as functions on $(\fra\otimes\bbZ_p)^\times$.  
	This shows that 
	\[
		\Li^\p_{k^I,\sigma}(t) = \lim_{r\rightarrow\infty} N\fra^{-(p-1)p^r}\Li^\p_{(k-(p-1)p^r)^I,\sigma}(t) 
	\]
	and 
	\[
		\int_{(\fra\otimes\bbZ_p)^\times} N(x)^{-k} d\mu_{\sigma,\xi}(x)
		=\lim_{r\rightarrow\infty} N\fra^{-(p-1)p^r}\int_{(\fra\otimes\bbZ_p)^\times}
		N(x)^{-k+(p-1)p^r} d\mu_{\sigma,\xi}(x). 
	\]
	Hence our assertion for the case of $k>0$ follows from the result for $k\leq 0$.
\end{proof}

We now prove Theorem \ref{thm: crucial}.

\begin{proof}[Proof of Theorem \ref{thm: crucial}]
	By construction of the polylogarithm class
	\[
		\Li^\p_k(t)\in H^{g-1}(\wh U^\fra_K/\Delta,\sO_{\wh\bbT^\fra_K})
	\]
	given in Definition \ref{def: k}, 
	we see that the class $\Li^\p_k(\xi)$ in $H^{g-1}(\xi/\Delta_\xi,\sO_\xi)$
	is represented by the cocycle 
	\[
		\bigl(\sgn(\bsalpha)N\fra^k \Li^\p_{k^I,\sigma_\bsalpha}(\xi)\bigr)
		 \in C^{g-1}(\frV_\fra/\Delta_\xi,\sO_\xi).
	\]
	Then by Lemma \ref{lem: isomorphism}, this class maps to
	\[
		N\fra^k\sum_{\sigma\in\Delta_\xi\backslash\Phi_{\xi}} \Li^\p_{k^I,\sigma}(\xi)  \in K(\xi)
	\]
	through the isomorphism \eqref{eq: isomorphism}.
    By Proposition \ref{prop: crucial} and the definition of $\mu_{\xi\Delta}$, we have 
	\begin{align*}
	    \sum_{\sigma\in\Delta_\xi\backslash\Phi_{\xi}} \Li^\p_{k^I,\sigma}(\xi)
	    &=\sum_{\sigma\in\Delta_\xi\backslash\Phi_{\xi}}
	    \int_{(\fra\otimes\bbZ_p)^\times}N(x)^{-k}d\mu_{\sigma,\xi}(x)\\
	    &=\int_{\ol\Delta\backslash(\fra\otimes\bbZ_p)^\times}N(x)^{-k}d\mu_{\xiDelta}(x).
	\end{align*}
	This proves our assertion, since $\wh{N}_\fra(x)=N\fra^{-1}N(x)$ for 
	any $x\in\fra\otimes\bbZ_p$. 
\end{proof}

%%%%%%%%%%%%%%%%%%%%%%%%%%%%%%%%%%%%%%%%%%%%%%%%%%
%
\subsection{Proof of the Main Theorem}
%
%%%%%%%%%%%%%%%%%%%%%%%%%%%%%%%%%%%%%%%%%%%%%%%%%%

In this subsection, we will prove our main result, Theorem \ref{thm: main}.
We will then prove Corollary \ref{cor: main}, which coincides with Theorem \ref{thm: 2} of \S1.
The following result will be used to relate the $p$-adic $L$-functions to the $p$-adic polylogarithms.

\begin{lemma}\label{lem: new}
    Let $\frg$ be an integral ideal of $\cO_F$ which  does not divide any power of $(p)$, 
	$\chi\colon\Cl^+_F(\frg)\rightarrow\bbC^\times$ a primitive 
	Hecke character of conductor $\frg$, and $\chi_p$ a finite character of $\Cl^+_F(p^\infty)$. 
	Then we have $\chi_p\cdot\mu_\chi=\mu_{\chi\!\chi_p}$, 
    as an equality of measures on $\Cl^+_F(p^\infty)$ 
    (recall that $\chi\!\chi_p$ denotes the primitive Hecke character 
    induced by the product of $\chi$ and $\chi_p$; see Remark \ref{rem: chi chi_p}). 
\end{lemma}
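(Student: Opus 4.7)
The plan is to deduce the identity $\chi_p\cdot\mu_\chi=\mu_{\chi\!\chi_p}$ by verifying that the measure $\chi_p\cdot\mu_\chi$ on $\Cl^+_F(p^\infty)$ satisfies the interpolation property characterizing $\mu_{\chi\!\chi_p}$ in Theorem \ref{thm: padicL}, and then invoking uniqueness. The first thing to check is that $\mu_{\chi\!\chi_p}$ is well-defined, i.e., that $\chi\!\chi_p$ falls within the scope of Theorem \ref{thm: padicL}. Since $\chi$ has conductor $\frg$ which does not divide any power of $(p)$, and since the conductor of $\chi\!\chi_p$ differs from $\frg$ only at primes above $(p)$ (see Remark \ref{rem: chi chi_p}), the conductor of $\chi\!\chi_p$ still does not divide any power of $(p)$.

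Next, for an arbitrary finite character $\chi'_p$ of $\Cl^+_F(p^\infty)$ and integer $k\in\bbN$, I would compute, directly from the definition of $\chi_p\cdot\mu_\chi$ and then from the interpolation property of $\mu_\chi$ (Theorem \ref{thm: padicL}) applied to the finite character $\chi_p\chi'_p$:
\begin{align*}
\int_{\Cl^+_F(p^\infty)}\chi'_p(\frx)\wh{N}(\frx)^k\,d(\chi_p\mu_\chi)(\frx)
&=\int_{\Cl^+_F(p^\infty)}(\chi_p\chi'_p)(\frx)\wh{N}(\frx)^k\,d\mu_\chi(\frx)\\
&=\Biggl(\prod_{\frp\divides(p)}\bigl(1-\chi\!(\chi_p\chi'_p)(\frp)N\frp^{-k}\bigr)\Biggr)L\bigl(\chi\!(\chi_p\chi'_p),-k\bigr).
\end{align*}
On the other hand, applying Theorem \ref{thm: padicL} to $\mu_{\chi\!\chi_p}$ with the finite character $\chi'_p$ gives
\[
\int_{\Cl^+_F(p^\infty)}\chi'_p(\frx)\wh{N}(\frx)^k\,d\mu_{\chi\!\chi_p}(\frx)
=\Biggl(\prod_{\frp\divides(p)}\bigl(1-(\chi\!\chi_p)\chi'_p(\frp)N\frp^{-k}\bigr)\Biggr)L\bigl((\chi\!\chi_p)\chi'_p,-k\bigr).
\]

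The remaining point, which is the only real subtlety, is that the two right-hand sides coincide. This follows from the observation recorded in Remark \ref{rem: chi chi_p}: the expression is independent of the choice of modulus used to view the product of Hecke characters, because the Euler factors at primes dividing $(p)$ are explicitly removed. Concretely, the Dirichlet series $L(\chi\!(\chi_p\chi'_p),s)$ and $L((\chi\!\chi_p)\chi'_p,s)$, each with the Euler factors at $\frp\divides(p)$ attached as in the displayed formulas, both equal the $(p)$-depleted Dirichlet series $\sum_{(\fra,(p))=1}(\chi\chi_p\chi'_p)(\fra)N\fra^{-s}$ computed with respect to the raw product of characters on ideals coprime to $(p)$. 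Once this is verified, the two integral formulas agree for every $\chi'_p$ and every $k\in\bbN$, and the uniqueness part of Theorem \ref{thm: padicL} forces $\chi_p\cdot\mu_\chi=\mu_{\chi\!\chi_p}$. The main obstacle, therefore, is purely bookkeeping about how primitivization of the product $\chi\chi_p\chi'_p$ interacts with the Euler factors at primes of $(p)$, and it is dispatched precisely by the independence statement in Remark \ref{rem: chi chi_p}.
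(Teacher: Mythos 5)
Your proposal is correct and follows essentially the same route as the paper's proof: apply the interpolation property of Theorem \ref{thm: padicL} to $\mu_\chi$ with the auxiliary finite character $\chi_p\chi'_p$, observe that the resulting formula matches the interpolation property defining $\mu_{\chi\!\chi_p}$, and conclude by uniqueness. The only difference is that you spell out two points the paper leaves implicit, namely that the conductor of $\chi\!\chi_p$ still does not divide any power of $(p)$ so that Theorem \ref{thm: padicL} applies, and that the well-definedness discussed in Remark \ref{rem: chi chi_p} is exactly what makes the two right-hand sides agree; both observations are accurate.
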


\begin{proof}
    By Theorem \ref{thm: padicL}, we have 
    \[
        \int_{\Cl^+_F(p^\infty)}\chi_p(\frx)\chi'_p(\frx)\wh{N}(\frx)^k d\mu_\chi(\frx)
        =\Biggl(\prod_{\frp\divides(p)}\bigl(1-\chi\!\chi_p\chi'_p(\frp)N\frp^{-k}\bigr)\Biggr)
        L(\chi\!\chi_p\chi'_p,-k)
    \]
    for any finite character $\chi'_p$ of $\Cl^+_F(p^\infty)$ and $k\in\bbN$. 
    This shows that $\chi_p\cdot\mu_\chi$ satisfies the same interpolation property 
    as $\mu_{\chi\!\chi_p}$, hence our assertion follows. 
\end{proof}

We may now prove our main theorem.

\begin{theorem}\label{thm: main}
	Let $\frg$ be an integral ideal of $\cO_F$ which does not divide any power of $(p)$,and let 
	$\chi\colon\Cl^+_F(\frg)\rightarrow\bbC^\times$ be a primitive 
	Hecke character of conductor $\frg$.
	If we denote by $L_p(\chi \omega_p^{1-k},s)$ the $p$-adic $L$-function 
	associated to the Hecke character
	$\chi\omega_p^{1-k}$, then we have
	\[
		L_p(\chi\omega_p^{1-k},k)=
		\sum_{\xi\in\sT_\prim[\frg]} c_{\chi}(\xi)\Li_k^\p(\xi)
	\]
	for any integer $k\in\bbZ$, where $\sT_\prim[\frg]$ is the set of 
	primitive elements of $\sT[\frg]$ given before Proposition \ref{prop: Hecke primitive}.
\end{theorem}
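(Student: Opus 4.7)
The proof will combine three ingredients already established in the paper: the definition of $L_p(\chi,s)$ as an integral against $\mu_\chi$, the compatibility of $\mu_\chi$ with twisting by finite characters (Lemma \ref{lem: new}), and the key specialization formula of Theorem \ref{thm: crucial}. The strategy is to massage the integral expression for $L_p(\chi\omega_p^{1-k},k)$ so that it becomes a sum of integrals of the form appearing in Theorem \ref{thm: crucial}, and then directly identify the result with the stated sum of polylogarithm values.

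First I would apply Definition \ref{def: padicL} to write
\[
    L_p(\chi\omega_p^{1-k},k) = \int_{\Cl^+_F(p^\infty)} \omega_p(\frx)^{-1}\langle\wh{N}(\frx)\rangle^{-k}\,d\mu_{\chi\omega_p^{1-k}}(\frx).
\]
By Lemma \ref{lem: new}, $d\mu_{\chi\omega_p^{1-k}} = \omega_p^{1-k}\,d\mu_\chi$, so the integrand becomes $\omega_p(\frx)^{-k}\langle\wh{N}(\frx)\rangle^{-k}$. Since $\wh{N}(\frx) = \omega_p(\frx)\langle\wh{N}(\frx)\rangle$ by the decomposition $\bbZ_p^\times = \mu\times(1+\bsp\bbZ_p)$, this simplifies to $\wh{N}(\frx)^{-k}$ for any $k\in\bbZ$, giving
\[
    L_p(\chi\omega_p^{1-k},k) = \int_{\Cl^+_F(p^\infty)} \wh{N}(\frx)^{-k}\,d\mu_\chi(\frx).
\]

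Next I would unwind the construction of $\mu_\chi$ given in the proof of Theorem \ref{thm: padicL}: via the bijection \eqref{eq: Cl(p^infty)}, the measure $\mu_\chi$ decomposes as $\sum_{\fra\in\frC}\mu_{\chi_\fra}$, where $\mu_{\chi_\fra} = \sum_{\xi\in\bbT^\fra_\prim[\frg]/\Delta} c_\chi(\xi)\,\mu_{\xiDelta}$ (using primitivity of $\chi$ together with Proposition \ref{prop: Hecke primitive} to restrict to primitive torsion points). Combining this with the fact that $\wh{N}$ restricts to $\wh{N}_\fra$ on each piece yields
\[
    L_p(\chi\omega_p^{1-k},k) = \sum_{\fra\in\frC}\sum_{\xi\in\bbT^\fra_\prim[\frg]/\Delta} c_\chi(\xi)\int_{\ol\Delta\backslash(\fra\otimes\bbZ_p)^\times}\wh{N}_\fra(x)^{-k}\,d\mu_{\xiDelta}(x).
\]

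Finally, since $\frg$ does not divide any power of $(p)$, every primitive $\frg$-torsion point $\xi\in\bbT^\fra_\prim[\frg]$ has order not a $p$-power, so lies in $\wh U^\fra_K$ and $\Li^\p_k(\xi)$ is defined. Theorem \ref{thm: crucial} then identifies each inner integral with $\Li^\p_k(\xi)$, and the double sum collapses to a sum over $\sT_\prim[\frg]\cong\coprod_{\fra\in\frC}\bbT^\fra_\prim[\frg]/\Delta$, yielding the desired identity. The entire argument is essentially bookkeeping once the key inputs are in place; there is no real obstacle, the main technical work having been absorbed into Theorem \ref{thm: crucial} and Lemma \ref{lem: new}. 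The one subtle point worth emphasizing is the passage from $\omega_p(\frx)^{-k}\langle\wh{N}(\frx)\rangle^{-k}$ to $\wh{N}(\frx)^{-k}$, which requires $k$ to be an integer, as asserted in the statement, and justifies why one may equate the value at $s=k$ of the $p$-adic $L$-function with the value at the integer $k$ of the polylogarithm.
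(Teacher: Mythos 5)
Your proposal is correct and follows essentially the same route as the paper: rewrite $L_p(\chi\omega_p^{1-k},k)$ via Definition \ref{def: padicL}, absorb the $\omega_p^{1-k}$ twist into the measure using Lemma \ref{lem: new} to reduce the integrand to $\wh{N}(\frx)^{-k}$, decompose $\mu_\chi$ over $\frC$ and then over primitive $\frg$-torsion points via the definition of $\mu_{\chi_\fra}$, and finally invoke Theorem \ref{thm: crucial}. The one point you flag as subtle (that $\omega_p^{-k}\langle\wh{N}\rangle^{-k}=\wh{N}^{-k}$ only for integer $k$) is correct and is exactly the step the paper takes implicitly.
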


\begin{proof}	
	By definition of the $p$-adic $L$-function given in Definition \ref{def: padicL} and 
	Lemma \ref{lem: new}, we have
	\begin{align*}
		L_p(\chi\omega_p^{1-k},k)&=
		\int_{\Cl^+_F(p^\infty)}\omega^{-1}_p(\frx) \langle \wh{N}(\frx)\rangle^{-k}
		d\mu_{\chi\omega_p^{1-k}}(\frx)
		=
		\int_{\Cl^+_F(p^\infty)}\omega^{-k}_p(\frx)\langle\wh{N}(\frx)\rangle^{-k}
		d\mu_{\chi}(\frx)\\
		&=
		\int_{\Cl^+_F(p^\infty)}\wh{N}(\frx)^{-k}
		d\mu_{\chi}(\frx)
		=
		\sum_{\fra\in\frC}
		\int_{\ol\Delta\backslash(\fra\otimes\bbZ_p)^\times}\wh{N}_\fra(x)^{-k}d\mu_{\chi}(x)
	\end{align*}
	for any integer $k\in\bbZ$, where $\frC$ is a set of representatives of $\Cl^+_F(1)$. 
	Moreover, since all $\xi\in\bbT^\fra_\prim[\frg]$ lie in $\wh U^\fra_K$, we have
	\begin{align*}
		\int_{\ol\Delta\backslash(\fra\otimes\bbZ_p)^\times}\wh{N}_\fra(x)^{-k}d\mu_{\chi}(x)
		&=\sum_{\xi\in\bbT^\fra_\prim[\frg]/\Delta_{\xi}}c_{\chi}(\xi) 	\int_{\ol\Delta\backslash(\fra\otimes\bbZ_p)^\times}\wh{N}_\fra(x)^{-k}d\mu_{\xiDelta}(x)\\
		&=\sum_{\xi\in\bbT^\fra_\prim[\frg]/\Delta}c_\chi(\xi)\Li^\p_k(\xi),
	\end{align*}
	for each $\fra\in\frC$, where the last equality follows from Theorem \ref{thm: crucial}. 
\end{proof}

We may reinterpret our main result in terms of the Gauss sum as follows.

\begin{definition}
    Let $\fra$ be a fractional ideal.
    For a Hecke character $\chi\colon\Cl^+_F(\frg)\to\bbC^\times$ 
    and a torsion point $\xi\in\bbT^\fra[\frg]$, 
    we define the Gauss sum $g(\chi,\xi)$ by
    \begin{equation*}%\label{eq: Gauss sum}
	g(\chi,\xi)\coloneqq
	N\frg\cdot c_{\chi}(\xi)=\sum_{\beta\in\fra/\frg\fra}\chi_\fra(\beta)\xi(-\beta).
    \end{equation*}
    By Lemma \ref{lem: two}, $g(\chi,\xi)$ depends only on the class of $\xi$ in $\sT[\frg]$.
\end{definition}

Before proceeding to our result, we first define a certain action of $\Cl^+_F(\frg)$ 
on the set $\sT[\frg]$. 
Let $\frI_\integ$ be the subset of $\frI$ consisting of integral ideals of $F$,
viewed as a submonoid of $\frI$ with respect to the multiplication,
and let $\frb\in\frI_\integ$.
Then for any $\fra\in\frI$, the natural inclusion $\fra\frb\subset\fra$
induces a morphism of algebraic tori
$
	\varphi(\frb)\colon\bbT^{\fra} \rightarrow \bbT^{\fra\frb}.
$
If $\xi\colon\fra\rightarrow R^\times$ is a point in $\bbT^\fra$, then 
$\varphi(\frb)(\xi)\colon\fra\frb\rightarrow R^\times$ 
is given by $\varphi(\frb)(\xi)(\alpha)=\xi(\alpha)$ for any $\alpha\in\fra\frb\subset\fra$.
Then the morphisms $\varphi(\frb)$ for all $\fra\in\frI$ defines a morphism
$\varphi(\frb)\colon\bbT\rightarrow\bbT$, 
and we obtain a homomorphism $\varphi\colon\frI_\integ\rightarrow\End(\bbT)$ of monoids, 
where $\End(\bbT)$ denotes the monoid of endomorphisms of $\bbT$ as a groupoid.

\begin{lemma}\label{lem: well-defined}
	For any $\frb\in\frI_\integ$, the morphism $\varphi(\frb)$ 
	commutes with the action of $F^\times_+$, hence induces a map
	\begin{equation}\label{eq: T}
		\varphi(\frb)\colon\sT\rightarrow\sT.
	\end{equation}
\end{lemma}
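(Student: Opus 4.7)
The plan is to verify the claimed equivariance directly on $R$-valued points of the algebraic tori, and then invoke the definition of $\sT$ as the quotient. The statement reduces, for each fractional ideal $\fra \in \frI$ and each $x \in F_+^\times$, to the commutativity of the square
\[
\xymatrix{
\bbT^{x\fra} \ar[r]^{\varphi(\frb)} \ar[d]_{\bra{x}} & \bbT^{x\fra\frb} \ar[d]^{\bra{x}} \\
\bbT^{\fra} \ar[r]^{\varphi(\frb)} & \bbT^{\fra\frb},
}
\]
where both vertical arrows are the isomorphisms induced by multiplication by $x$ (using $x\fra\frb = (x\fra)\frb$), and both horizontal arrows come from the respective inclusions of ideals. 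Note that $\frb \in \frI_\integ$ ensures $\fra\frb \subset \fra$, so that $\varphi(\frb)$ is defined.

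I would then check commutativity on $R$-points by direct computation. Given $\xi \in \bbT^{x\fra}(R) = \Hom_\bbZ(x\fra, R^\times)$, one route sends $\xi$ first to its restriction $\xi|_{x\fra\frb}$ along $x\fra\frb \hookrightarrow x\fra$, then applies $\bra{x}$ to obtain the character $\alpha \mapsto \xi(x\alpha)$ on $\fra\frb$. The other route first sends $\xi$ to the character $\xi^x \in \bbT^\fra(R)$ with $\xi^x(\alpha) = \xi(x\alpha)$ for $\alpha \in \fra$, and then restricts it to $\fra\frb \subset \fra$. Both compositions produce the same character of $\fra\frb$, namely $\alpha \mapsto \xi(x\alpha)$, so the square commutes.

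Taking the coproduct over all $\fra \in \frI$, this shows that $\varphi(\frb) \colon \bbT \to \bbT$ is $F_+^\times$-equivariant. Since $\sT = \bbT/F_+^\times$ by definition, $\varphi(\frb)$ descends to a well-defined map $\varphi(\frb) \colon \sT \to \sT$, as required. No serious obstacle is anticipated here: the only thing to keep in mind is that the source and target tori change under $\varphi(\frb)$ (from $\bbT^\fra$ to $\bbT^{\fra\frb}$), so the verification must be carried out with the correct indexing of the $F_+^\times$-action on both sides, which is handled by the identity $x\fra\frb = (x\fra)\frb$ for $\frb$ integral.
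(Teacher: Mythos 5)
Your proof is correct and takes essentially the same approach as the paper: the paper establishes the commutative square of fractional ideals (inclusions versus multiplication by $x$) and then applies $\Hom_\bbZ(-,\bbG_m)$, while you check the resulting square of tori directly on $R$-valued points; these are the same computation.
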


\begin{proof}
	let $x\in F_+^\times$ and $\fra\in\frI$.
	We have a commutative diagram 
	\[
		\xymatrix{
			\fra \frb \ar@{^{(}->}[r]\ar[d]_\cong^x& \fra \ar[d]_\cong^x\\
			x\fra\frb\ar@{^{(}->}[r] &  x\fra		
		}	
	\]
	where the horizontal arrows are the inclusions and the vertical arrows 
	are multiplication by $x$.  
	This shows that the induced maps on the tori gives a commutative diagram
	\[
		\xymatrix{
			\bbT^{\fra\frb} & \bbT^{\fra} \ar[l]_{\varphi(\frb)}\\
			\bbT^{x\fra\frb}\ar[u]^{\bra{x}}_\cong & \bbT^{x\fra}\ar[u]^{\bra{x}}_\cong\ar[l]_{\varphi(\frb)},
		}	
	\]
	hence $\varphi(\frb)$ induces the map \eqref{eq: T} on $\sT$.
\end{proof}

Lemma \ref{lem: well-defined} gives a morphism of monoids 
$\varphi\colon\frI_\integ\rightarrow\End(\sT)$.
This induces, for each integral ideal $\frg$, an action of $\Cl^+_F(\frg)$ on $\sT[\frg]$ 
as follows. 

\begin{lemma}\label{lem: WD1}
	Let $\beta\in \cO_{F_+}$ such that $\beta\equiv1\pmod\frg$, and put $\frb=(\beta)$.
	Then for any $\xi\in\sT[\frg]$, we have $\varphi(\frb)(\xi)=\xi$. 
	In other words, $P_+(\frg)\cap\frI_\integ$ acts trivially on $\sT[\frg]$, hence 
	$\Cl^+_F(\frg)\cong(\frI_\frg\cap \frI_\integ)/(P_+(\frg) \cap \frI_\integ)$ 
	acts via $\varphi$ on the set $\sT[\frg]$. 
\end{lemma}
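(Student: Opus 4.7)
The plan is to reduce the first assertion to a direct computation by identifying $\varphi(\frb)(\xi)$, \emph{a priori} an element of $\bbT^{\fra\frb}$, with an element of $\bbT^{\fra}$ in the same $F_+^\times$-orbit as $\xi$. Since $\frb=(\beta)$ is principal, the product $\fra\frb$ coincides with the twisted ideal $\beta\fra$, so I can apply the equivariance isomorphism $\bra{\beta}\colon\bbT^{\beta\fra}\isomto\bbT^{\fra}$ of \eqref{eq: ab}. Chasing the definitions, the composition $\bra{\beta}\circ\varphi((\beta))$ sends $\xi\in\bbT^\fra(R)$ to the character $\xi^\beta\in\bbT^\fra(R)$ given by $\xi^\beta(\alpha)=\xi(\beta\alpha)$ for $\alpha\in\fra$. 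Hence in $\sT$ the point $\varphi((\beta))(\xi)$ is identified with $\xi^\beta$.

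Next I will invoke the hypothesis $\beta\equiv 1\pmod\frg$: for every $\alpha\in\fra$ one has $(\beta-1)\alpha\in\frg\fra$, and because $\xi\in\bbT^\fra[\frg]$ factors through $\fra/\frg\fra$, this forces $\xi(\beta\alpha)=\xi(\alpha)$. Thus $\xi^\beta=\xi$ as characters of $\fra$, and combined with the identification of the previous paragraph this gives $\varphi(\frb)(\xi)=\xi$ in $\sT[\frg]$, which is the main content of the lemma.

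For the second, more formal assertion I will argue as follows. First, $\varphi(\frb)$ preserves $\frg$-torsion: for any $\xi\in\bbT^\fra[\frg]$ and any $\gamma\in\frg\fra\frb\subset\frg\fra$ one has $\varphi(\frb)(\xi)(\gamma)=\xi(\gamma)=1$, so $\varphi(\frb)(\xi)$ lies in $\bbT^{\fra\frb}[\frg]$. Combining this observation with Lemma \ref{lem: well-defined} yields a monoid homomorphism $\varphi\colon\frI_\integ\to\End(\sT[\frg])$, and the first part shows that its restriction to $P_+(\frg)\cap\frI_\integ$ is trivial. The isomorphism $\Cl^+_F(\frg)\cong(\frI_\frg\cap\frI_\integ)/(P_+(\frg)\cap\frI_\integ)$ is standard, since weak approximation provides an integral representative prime to $\frg$ in each narrow ray class. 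Since $\Cl^+_F(\frg)$ is a group, the induced map $\Cl^+_F(\frg)\to\End(\sT[\frg])$ automatically lands in the automorphism subgroup.

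The only delicate step is the identification $\varphi((\beta))(\xi)=\xi^\beta$ in $\sT$, which is really a matter of matching the natural restriction map defining $\varphi$ with the equivariance isomorphism $\bra{\beta}$. Once this is transparent, the congruence $\beta\equiv 1\pmod\frg$ combined with the $\frg$-torsion condition on $\xi$ does the rest immediately; there is no analytic or geometric subtlety beyond keeping the bookkeeping straight.
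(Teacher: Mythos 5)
Your proof is correct and takes essentially the same approach as the paper: the paper shows $\varphi(\frb)(\xi)=\bra{\beta^{-1}}(\xi)$ as characters of $\fra\frb$, while you equivalently show $\bra{\beta}(\varphi(\frb)(\xi))=\xi^\beta=\xi$ as characters of $\fra$, the key step in both cases being that $\beta\equiv 1\pmod\frg$ forces $\xi(\beta\alpha)=\xi(\alpha)$ because $\xi$ factors through $\fra/\frg\fra$. Your additional remarks about $\varphi(\frb)$ preserving $\frg$-torsion and about the ray-class-group isomorphism make explicit details the paper leaves to the reader, but they do not change the substance of the argument.
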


\begin{proof}
	Consider $\xi\in\bbT^\fra[\frg]$ for some fractional ideal $\fra\in\frI$.
	By definition, $\varphi(\frb)(\xi)$ is the homomorphism
	$\varphi(\frb)(\xi)\colon \fra\frb/\frg\fra\frb \rightarrow\ol\bbQ^\times$
	given by $\varphi(\frb)(\xi)(\alpha\beta)=\xi(\alpha\beta)$ for any $\alpha\in\fra$.
	On the other hand, $\beta^{-1}\in F_+^\times$ defines a map 
	$\bra{\beta^{-1}}\colon\bbT^\fra\rightarrow\bbT^{\beta\fra}$.
	Then $\bra{\beta^{-1}}(\xi)$ is an element in $\bbT^{\beta\fra}=\bbT^{\fra\frb}$
	such that $\bra{\beta^{-1}}(\xi)(\alpha\beta)=\xi(\beta^{-1}\alpha\beta)
	 =\xi(\alpha)$ for any $\alpha\in\fra$.
	Since $\beta\equiv1\pmod\frg$, we have $\xi(\alpha)=\xi(\alpha\beta)$ 
	for any $\alpha\in\fra$, hence $\varphi(\frb)(\xi)=\bra{\beta^{-1}}(\xi)$ 
	as functions on $\fra\frb$.
	Thus $\xi$ and $\varphi(\frb)(\xi)$ coincides as elements in $\sT[\frg]$, as desired. 
\end{proof}

In the following, we put $\xi^\frb\coloneqq \varphi(\frb)(\xi)$ 
for $\xi\in\bbT^\fra_\prim[\frg]$ and $\frb\in\Cl^+_F(\frg)$, 
where $\varphi$ is the action given in Lemma \ref{lem: WD1}. 
We have the following.

\begin{lemma}\label{lem: ST}
    The action of $\Cl^+_F(\frg)$ 
    given by Lemma \ref{lem: WD1}
    is simply transitive on $\sT_\prim[\frg]$.
\end{lemma}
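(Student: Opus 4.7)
The plan is to fix a primitive base point $\xi_0 \in \bbT^{\fra_0}_\prim[\frg]$ and prove that the orbit map $\Cl^+_F(\frg) \to \sT_\prim[\frg]$ sending $\frb \mapsto \xi_0^\frb$ is a bijection. I will break this into three steps: (i) the action preserves primitivity, so the map is well-defined; (ii) the action is free, so the map is injective; (iii) source and target have equal cardinality. Injectivity of a map between finite sets of the same cardinality then forces bijectivity, which is exactly simple transitivity.

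For (i), I work with the conductor $\frn_\xi \coloneqq \{x \in \cO_F \mid \xi(x\fra) = 1\}$ of a character $\xi \in \bbT^\fra[\frg]$; primitivity is equivalent to $\frn_\xi = \frg$. The inclusion $\frn_\xi \subset \frn_{\xi^\frb}$ is clear since $x\fra\frb \subset x\fra$. For the reverse, the coprimality $\frb + \frg = \cO_F$ provides a decomposition $1 = b + g$ with $b \in \frb,\, g \in \frg$; then for any $x \in \frn_{\xi^\frb}$ and $\alpha \in \fra$, the splitting $x\alpha = xb\alpha + xg\alpha$ yields $\xi(x\alpha) = 1$, as the first summand lies in $x\fra\frb$ (annihilated by $x \in \frn_{\xi^\frb}$) and the second lies in $\frg\fra$ (annihilated by $\frg \subset \frn_\xi$). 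Hence $\frn_{\xi^\frb} = \frn_\xi = \frg$. For (ii), suppose $\xi_0^\frb \sim \xi_0$ in $\sT$. Since $\xi_0^\frb \in \bbT^{\fra_0\frb}$ and $\xi_0 \in \bbT^{\fra_0}$, the $F_+^\times$-equivalence forces $\fra_0\frb = y\fra_0$, so $\frb = (y)$ for some $y \in F_+^\times$, and the identity $\bra{y}(\xi_0^\frb) = \xi_0$ unravels to $\xi_0(y\alpha) = \xi_0(\alpha)$ for all $\alpha \in \fra_0$, i.e., $y - 1 \in \frn_{\xi_0} = \frg$. Hence $\frb \in P_+(\frg)$ is trivial in $\Cl^+_F(\frg)$.

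For (iii), I observe that $\bbT^\fra[\frg]$ is a free $\cO_F/\frg$-module of rank one, so $\bbT^\fra_\prim[\frg]$ (the set of generators) and $(\fra/\frg\fra)^\times$ are both $(\cO_F/\frg)^\times$-torsors, and on both the $\Delta$-action factors through the reduction $\Delta \to (\cO_F/\frg)^\times$. Consequently $|\bbT^\fra_\prim[\frg]/\Delta| = |\Delta\backslash(\fra/\frg\fra)^\times|$ for each $\fra \in \frC$, and summing over $\frC$ combined with Lemma \ref{lem: ray class group} gives $|\sT_\prim[\frg]| = |\Cl^+_F(\frg)|$. The main obstacle is step (i), whose proof relies essentially on $(\frb, \frg) = 1$: without this coprimality, the conductor ideal of $\xi^\frb$ might strictly contain $\frn_\xi$, breaking primitivity, and then the map $\frb \mapsto \xi_0^\frb$ would fail even to land in $\sT_\prim[\frg]$.
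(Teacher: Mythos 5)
Your proof is correct, and it takes a genuinely different route from the paper's. The paper proves transitivity by direct construction: given two primitive classes represented by $\xi\in\bbT^\fra_\prim[\frg]$ and $\xi'\in\bbT^{\fra'}_\prim[\frg]$, it first chooses an integral $\frb_0$ prime to $\frg$ with $\fra\frb_0=x_0\fra'$ for some $x_0\in F_+^\times$, then uses that $\bbT^{\fra'}_\prim[\frg]$ is the set of generators of a rank-one free $\cO_F/\frg$-module --- hence a torsor under $(\cO_F/\frg)^\times$ --- to adjust $(\frb_0,x_0)$ by a unit $\alpha_0$ so that $\varphi(\alpha_0\frb_0)$ actually carries $\xi$ to $\xi'$; freeness of the action is then a separate short argument, essentially the same as your step (ii). You instead prove that the action preserves $\sT_\prim[\frg]$ (your conductor computation in step (i), which the paper leaves implicit and which at bottom reflects the isomorphism $\fra\frb/\frg\fra\frb\cong\fra/\frg\fra$ for $(\frb,\frg)=1$), establish injectivity of the orbit map, and replace the explicit surjectivity argument by a cardinality count, matching $|\sT_\prim[\frg]|$ with $|\Cl^+_F(\frg)|$ via Lemma \ref{lem: ray class group} and the same $(\cO_F/\frg)^\times$-torsor identification. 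Both arguments hinge on the rank-one freeness of $\bbT^\fra[\frg]$ over $\cO_F/\frg$; the paper uses it to manufacture the transporting ideal, you use it to count orbits. Your route is arguably cleaner to state but leans on finiteness, whereas the paper's construction is explicit and would survive in settings where one cannot simply count.
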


\begin{proof}
    Take any two elements of $\sT_\prim[\frg]$ 
    represented by $\xi\in\bbT^\fra_\prim[\frg]$ and $\xi'\in\bbT^{\fra'}_\prim[\frg]$ 
    respectively. Then, for an integral ideal $\frb$ prime to $\frg$, 
    $\xi^\frb$ and $\xi'$ represent the same element in $\sT_\prim[\frg]$ 
    if and only if there exists $x\in F^\times_+$ such that $\bra{x}(\xi^\frb)=\xi'$, 
    i.e., $\fra\frb=x\fra'$ and $\xi'(\alpha)=\xi(x\alpha)$ holds for all $\alpha\in\fra'$. 
    Thus we have to show that, for arbitrarily given $\xi$ and $\xi'$, 
    there exists a pair of $\frb$ and $x$ as above, and $\frb$ is unique up to $P_+(\frg)$. 
    
    We construct such $\frb$ and $x$. First, take an integral ideal $\frb_0$ prime to $\frg$ 
    from the narrow ideal class of $\fra^{-1}\fra'$, so that there exists $x_0\in F^\times_+$ 
    satisfying $\fra\frb_0=x_0\fra'$. Then define a character 
    $\xi'_0\in\bbT^{\fra'}_\prim[\frg]$ by $\xi'_0(\alpha)=\xi(x_0\alpha)$. 
    Since $\fra'/\frg\fra'$ is a free $\cO_F/\frg$-module of rank one, 
    so is its character group $\bbT^{\fra'}[\frg]$, 
    and $\bbT^{\fra'}_\prim[\frg]$ is the set of generators of the latter. 
    Hence the two elements $\xi',\xi'_0\in\bbT^{\fra'}_\prim[\frg]$ coincide up to 
    multiplication by $(\cO_F/\frg)^\times$, that is, 
    there exists $\alpha_0\in(\cO_F/\frg)^\times$ such that $\xi_0(\alpha)=\xi'_0(\alpha_0\alpha)$. 
    Then the pair $(\frb,x)\coloneqq (\alpha_0\frb_0,\alpha_0 x)$ has the desired property. 
    
    To show the uniqueness, we may assume that $\xi=\xi'$. Then we have $\frb=(x)$ with $x\in\cO_{F+}$. 
    Moreover, by the above interpretation of $\bbT^\fra_\prim[\frg]$, 
    the identity $\xi'(\alpha)=\xi(x\alpha)$ implies that $x\equiv 1\bmod{\frg}$. 
    Hence $\frb$ is trivial in $\Cl^+_F(\frg)$. This completes the proof
    of our assertion. 
\end{proof}

The following lemma describes the behavior of the Gauss sums under this action. 

\begin{lemma}\label{lem: Gauss sums}
    Let $\xi\in\bbT^\fra_\prim[\frg]$ and $\frb\in\Cl^+_F(\frg)$. 
    Then for any Hecke character $\chi$ of conductor $\frg$, we have 
    \[
        g(\chi,\xi^\frb)=g(\chi,\xi)\chi(\frb)^{-1}. 
    \]
\end{lemma}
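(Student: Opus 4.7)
The plan is to unwind both sides of the identity directly from the definition of the Gauss sum, and to compare them via a canonical isomorphism $\fra\frb/\frg\fra\frb\cong\fra/\frg\fra$ induced by inclusion. Since $\xi^\frb$ depends only on the class of $\frb$ in $\Cl^+_F(\frg)$ by Lemma~\ref{lem: WD1} and $g(\chi,\xi^\frb)$ depends only on the class of $\xi^\frb$ in $\sT[\frg]$ by Lemma~\ref{lem: two}, and since $\chi(\frb)$ depends only on the class of $\frb$ in $\Cl^+_F(\frg)$, I may assume throughout that $\frb$ is an integral ideal prime to $\frg$. By the defining property $\xi^\frb(\alpha)=\xi(\alpha)$ for $\alpha\in\fra\frb$, the Gauss sum on the left becomes
\[
    g(\chi,\xi^\frb)=\sum_{\alpha\in\fra\frb/\frg\fra\frb}\chi_{\fra\frb}(\alpha)\,\xi(-\alpha).
\]

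The first step will be to verify that the inclusion $\fra\frb\hookrightarrow\fra$ descends to an $\cO_F/\frg$-module isomorphism $\iota\colon\fra\frb/\frg\fra\frb\xrightarrow{\cong}\fra/\frg\fra$, using coprimality of $\frb$ and $\frg$. I would compute $\ker\iota=(\frg\fra\cap\fra\frb)/\frg\fra\frb=\fra(\frg\cap\frb)/\frg\fra\frb=0$ via $\frg\cap\frb=\frg\frb$, and then combine injectivity with the equality of cardinalities (both equal to $N\frg$) to conclude bijectivity.

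The second, and main, step is the pointwise comparison $\chi_\fra\circ\iota=\chi(\frb)\,\chi_{\fra\frb}$. For a generator $\alpha\in(\fra\frb/\frg\fra\frb)^\times$ with a totally positive representative in $\fra\frb_+$, both $(\fra\frb)^{-1}(\alpha)$ and $\fra^{-1}(\alpha)=\frb\cdot(\fra\frb)^{-1}(\alpha)$ are integral ideals prime to $\frg$, and unwinding the definition of $\chi_\fra$ and $\chi_{\fra\frb}$ in terms of the Hecke character $\chi$ gives
\[
    \chi_\fra(\iota(\alpha))=\chi(\fra^{-1}\alpha)=\chi(\frb)\,\chi((\fra\frb)^{-1}\alpha)=\chi(\frb)\,\chi_{\fra\frb}(\alpha).
\]
The subtle point will be extending this identity to non-generators. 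Since $\iota$ is an $\cO_F/\frg$-module isomorphism, it identifies the sets of generators on the two sides; hence $\chi_{\fra\frb}$ and $\chi_\fra\circ\iota$ vanish simultaneously on non-generators, which are exactly the elements on which both are defined to be zero.

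Substituting this identity into the sum above and re-indexing via $\iota$ then collapses $g(\chi,\xi^\frb)$ to $\chi(\frb)^{-1}g(\chi,\xi)$, yielding the desired formula. The hardest part is keeping track of the extension-by-zero convention across the two different quotients $\fra/\frg\fra$ and $\fra\frb/\frg\fra\frb$, which is handled entirely by the $\cO_F/\frg$-module structure of the isomorphism $\iota$.
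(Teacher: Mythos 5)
Your proposal is correct and follows essentially the same route as the paper: choose an integral representative $\frb$ prime to $\frg$, use the defining relation $\xi^\frb(\alpha)=\xi(\alpha)$ to rewrite the left-hand side as a sum over $\fra\frb/\frg\fra\frb$, and then re-index through the isomorphism $\fra\frb/\frg\fra\frb\cong\fra/\frg\fra$ while extracting the factor $\chi(\frb)^{-1}$ from the relation $\chi_\fra(\alpha)=\chi(\frb)\,\chi_{\fra\frb}(\alpha)$. The paper compresses this into a one-line computation; your version just makes explicit the verification that the inclusion induces an isomorphism and the comparison of $\chi_\fra$ with $\chi_{\fra\frb}$ including the extension-by-zero convention, which the paper leaves implicit.
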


\begin{proof}
    We may compute both sides by choosing a representative $\frb$ of the given ray class 
    to be integral. Then we have 
    \[
    	g(\chi,\xi^\frb)
    	=\sum_{\alpha\in\fra\frb/\frg\fra\frb}\chi_{\fra\frb}(\alpha)\xi^\frb(-\alpha)
    	=\chi(\frb)^{-1}\sum_{\alpha\in\fra/\frg\fra}\chi_{\fra}(\alpha)\xi(-\alpha)
    	=g(\chi,\xi)\chi(\frb)^{-1},
    \]
    where we have used the fact that since $\frb$ is prime to $\frg$,
    the natural inclusion $\fra\frb\subset\fra$ induces an isomorphism
    $\fra\frb/\frg\fra\frb\cong\fra/\frg\fra$. 
\end{proof}

Now Theorem \ref{thm: main} gives the following corollary, 
which was stated as Theorem \ref{thm: 2} in \S1.

\begin{corollary}\label{cor: main}
	Let the notation be as in Theorem \ref{thm: main}, and fix an arbitrary $\xi\in\sT_\prim[\frg]$. 
	Then we have 
	\[
		L_p(\chi\omega_p^{1-k},k)
		=\frac{g(\chi,\xi)}{N\frg}
		\sum_{\frb\in\Cl^+_F(\frg)}
		\chi(\frb)^{-1}\,\Li_k^\p(\xi^\frb)
	\]
	for any integer $k\in\bbZ$.
\end{corollary}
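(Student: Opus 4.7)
The plan is to derive this corollary directly from Theorem \ref{thm: main} by reparametrizing the sum over $\sT_\prim[\frg]$ using the simply transitive action of $\Cl^+_F(\frg)$ on that set, and then factoring out the Gauss sum via Lemma \ref{lem: Gauss sums}. There is no new analytic input required; the work is entirely bookkeeping at the level of finite sums.

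First, I would invoke Theorem \ref{thm: main} to write
\[
    L_p(\chi\omega_p^{1-k},k)=\sum_{\xi'\in\sT_\prim[\frg]}c_\chi(\xi')\,\Li_k^\p(\xi').
\]
Next, by Lemma \ref{lem: ST}, the assignment $\frb\mapsto\xi^\frb$ gives a bijection $\Cl^+_F(\frg)\isomto\sT_\prim[\frg]$ (with $\xi$ the fixed base point). Rewriting the sum along this bijection,
\[
    L_p(\chi\omega_p^{1-k},k)=\sum_{\frb\in\Cl^+_F(\frg)}c_\chi(\xi^\frb)\,\Li_k^\p(\xi^\frb).
\]

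It then remains to compute $c_\chi(\xi^\frb)$. By the definition of the Gauss sum we have $c_\chi(\xi^\frb)=g(\chi,\xi^\frb)/N\frg$, and Lemma \ref{lem: Gauss sums} gives $g(\chi,\xi^\frb)=g(\chi,\xi)\chi(\frb)^{-1}$. Substituting and pulling the constant $g(\chi,\xi)/N\frg$ out of the sum yields
\[
    L_p(\chi\omega_p^{1-k},k)=\frac{g(\chi,\xi)}{N\frg}\sum_{\frb\in\Cl^+_F(\frg)}\chi(\frb)^{-1}\,\Li_k^\p(\xi^\frb),
\]
which is the desired identity.

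There is no genuine obstacle here beyond ensuring that the action $\frb\mapsto\xi^\frb$ and the Gauss sum transformation are consistent; both have already been verified in Lemmas \ref{lem: ST} and \ref{lem: Gauss sums}. The corollary is thus a purely formal consequence of Theorem \ref{thm: main} combined with these two lemmas.
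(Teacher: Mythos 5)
Your proof is correct and takes exactly the same route as the paper: apply Theorem \ref{thm: main}, reindex the sum over $\sT_\prim[\frg]$ via the simply transitive action from Lemma \ref{lem: ST}, and transform the Gauss sums with Lemma \ref{lem: Gauss sums}. The paper's proof simply cites these three ingredients without spelling out the reindexing and substitution that you made explicit.
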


\begin{proof}
    The assertion follows from Theorem \ref{thm: main}, Lemma \ref{lem: ST}, and Lemma \ref{lem: Gauss sums}.
\end{proof}

\begin{remark}\label{rem: speculation}
	The action $\varphi$ of $\Cl^+_F(\frg)$ on $\sT[\frg]$ described in Lemma \ref{lem: WD1}
	gives a striking parallel to the case of elliptic curves with complex multiplication.
	Although we do not currently have any concrete ideas to attack the problem,
	if $\sT$ can be equipped with a certain $F$-structure so that the points of $\sT$
	have a natural action of $\Gal(\ol F/F)$, and if we could prove that this action
	is compatible with the action $\varphi$ on $\sT[\frg]$
	through the isomorphism $\Cl^+_F(\frg)\cong\Gal(F(\frg)/F)$ given by class field theory,
	then the stack $\sT=\bbT/F_+^\times$ 
	would provide a basis for considering Kronecker's Jugendtraum 
	in the case of totally real fields.
\end{remark}

\subsection*{Acknowledgement} 
The authors would like to thank the KiPAS program FY2014--2018 of the Faculty of Science and Technology 
at Keio University, especially Professors Yuji Nagasaka and Masato Kurihara, 
for providing an excellent environment making this research possible. 
The authors would also like to thank Hohto Bekki, Tatsuya Ohshita and Shinji Chikada for discussion.
The authors sincerely thank the editor and the referee for carefully reading the article and for comments which helped to improve the article.

\begin{bibdiv}
	\begin{biblist}
		\bibselect{PolylogarithmBibliography2}
	\end{biblist}
\end{bibdiv}

\end{document}